\documentclass[a4paper,10pt,reqno]{amsart}

\usepackage[margin=1in]{geometry}
\usepackage[T1]{fontenc}
\usepackage{amsmath,amssymb,mathtools}
\usepackage{booktabs}
\usepackage{array}
\usepackage{longtable}
\IfFileExists{newtxtext.sty}{%
  \usepackage{newtxtext,newtxmath}%
}{%
  \usepackage{lmodern}%
}
\usepackage{microtype}
\usepackage[numbers,sort&compress]{natbib}
\usepackage{enumitem}
\usepackage{needspace}
\usepackage{etoolbox}
\usepackage{xurl}
\usepackage[hidelinks,pdfencoding=auto,psdextra]{hyperref}
\usepackage{bookmark}

\setlist[itemize]{leftmargin=2em,itemsep=0.12em,topsep=0.35em}
\setlist[enumerate]{leftmargin=2.2em,itemsep=0.12em,topsep=0.35em}
\allowdisplaybreaks[2]
\ifdefined\pdfgentounicode
\fi

\theoremstyle{plain}
\newtheorem{theorem}{Theorem}[section]
\newtheorem{proposition}[theorem]{Proposition}
\newtheorem{lemma}[theorem]{Lemma}
\newtheorem{corollary}[theorem]{Corollary}

\newtheorem*{wowconjecture}{Conjecture (WOW-284)}
\theoremstyle{definition}
\newtheorem{definition}[theorem]{Definition}
\theoremstyle{remark}
\newtheorem{remark}[theorem]{Remark}

\BeforeBeginEnvironment{theorem}{\Needspace{6\baselineskip}}
\BeforeBeginEnvironment{proposition}{\Needspace{6\baselineskip}}
\BeforeBeginEnvironment{lemma}{\Needspace{6\baselineskip}}
\BeforeBeginEnvironment{corollary}{\Needspace{5\baselineskip}}
\BeforeBeginEnvironment{definition}{\Needspace{5\baselineskip}}
\BeforeBeginEnvironment{remark}{\Needspace{4\baselineskip}}
\BeforeBeginEnvironment{proof}{\Needspace{4\baselineskip}}

\newcommand{\F}{\mathbb F_5}
\newcommand{\one}{\mathbf 1}
\newcommand{\Spec}{\operatorname{Spec}}
\newcommand{\tr}{\operatorname{tr}}
\newcommand{\diam}{\operatorname{diam}}
\newcommand{\resultbox}[1]{\boxed{#1}}
\newcommand{\RepoTag}{v2.2.8}
\newcommand{\codefile}[1]{%
  \href{https://github.com/SamPetkov/wow284/blob/\RepoTag/#1}{\path{#1}}}
\newcommand{\datafile}[1]{%
  \href{https://github.com/SamPetkov/wow284/blob/\RepoTag/#1}{\path{#1}}}

\hypersetup{
  pdftitle={Counterexamples, Spectral Obstructions, and Deletion Stability for WOW-284},
  pdfauthor={Samuil Petkov},
  pdfsubject={Counterexamples and structural spectral theory for the minimum-dual-degree conjecture WOW-284},
  pdfkeywords={distance spectrum, dual degree, Moore graph},
  pdfcreator={LaTeX with amsart},
  pdfproducer={pdfTeX},
  pdfdisplaydoctitle=true
}

\title[Counterexamples and obstructions for WOW-284]
{Counterexamples, Spectral Obstructions, and Deletion Stability for WOW-284}
\author{Samuil Petkov}
\address{Department of Physics, \'Ecole normale sup\'erieure,
Universit\'e PSL, Paris, France}
\email{samuil.petkov@phys.ens.psl.eu}
\date{}
\subjclass[2020]{Primary 05C50; Secondary 05C12, 05C35, 05E30}
\keywords{distance spectrum, dual degree, Moore graph}

\begin{document}

\begin{abstract}
WOW-284 asserts that the minimum dual degree of every connected graph of
order at least three and girth at least five does not exceed the negative of
its least distance eigenvalue.  We refute it with exact counterexamples of
orders \(38,39,40,42\), and \(50\), and develop a structural theory of the
failure.  For a connected \(k\)-regular graph of girth at least five and
diameter three, we prove
\[
 \delta^*(G)+\lambda_{\min}(D(G))
 =2k-2-\max_{\theta\ne k}(\theta+1)^2.
\]
Here \(\theta\) ranges over the nonprincipal adjacency eigenvalues.
We further prove that every regular strict counterexample has degree at least six and
diameter at most four, while diameter four forces degree at least ten.  We
solve the associated one-variable nonbacktracking linear program exactly,
including optimizer rigidity.  For regular strict counterexamples of diameter
three, the optimizer yields a positive-semidefinite slack matrix whose integral
excess gives the stronger bound
\[
 |V(G)|\le\left\lfloor
 \frac{3(k+2)^2(k^2+3)}{18k+41}
 \right\rfloor;
\]
this follows from a three-to-one quantization theorem for the integral excess.
The slack matrix's principal minors also recover local cycle constraints.  In
particular, regular degree-six counterexamples have order at most \(50\), and
at the degree-six, order-\(50\) boundary the associated signed complement is
necessarily disconnected.  We determine the distance spectra of one- and two-vertex
punctures of Moore graphs and establish a uniform deletion-stability bound:
every deletion of at most five vertices from the Hoffman--Singleton graph
remains a strict counterexample, whereas an explicit six-vertex deletion does
not.  All theorem-level computations use exact arithmetic.  Lean 4.31
kernel-checks the explicit \(50\)-vertex Hoffman--Singleton counterexample at
graph level, finite spectral certificates at orders \(38,39,40,42\), and the
analytic LP optimum and rigidity for every integer \(k\ge4\).
\end{abstract}

\maketitle

\section{Introduction}\label{sec:introduction}

Let \(G\) be a finite simple connected graph of order
\(n=|V(G)|\ge2\).  Let \(A=A(G)\) and \(D=D(G)\) be its adjacency and
distance matrices, let \(I\) and \(J\) denote the identity and all-ones
matrices of order \(n\), and let \(\one\) be the all-ones vector.  Write
\(d(v)\) for the degree of \(v\), \(N(v)\) for its open neighbourhood, and, for
\(S\subseteq V(G)\), let \(G-S\) denote the subgraph induced by
\(V(G)\setminus S\).  Define
\[
 d^*(v)=\frac1{d(v)}\sum_{u\in N(v)}d(u),
 \qquad
 \delta^*(G)=\min_{v\in V(G)}d^*(v).
\]
Thus \(D=(d_G(u,v))_{u,v\in V(G)}\).  Its eigenvalues are
ordered as
\[
 \partial_1(G)\ge\cdots\ge\partial_n(G),
 \qquad \partial_n(G)=\lambda_{\min}(D(G)).
\]
Aouchiche and Hansen record the following Graffiti conjecture as
Conjecture~7.16 and attribute it to Fajtlowicz's 1998
\emph{Written on the Wall} report
\cite{Fajtlowicz1998} (see also \cite[Conjecture~7.16]{AouchicheHansen2014}).

\begin{wowconjecture}
If \(G\) has order at least three and girth at least five, then
\[
 \delta^*(G)\le-\lambda_{\min}(D(G)).
\]
\end{wowconjecture}

For graphs in the domain of the conjecture, put
\[
 \Phi(G)=\delta^*(G)+\lambda_{\min}(D(G)).
\]
Thus \(G\) is a strict counterexample precisely when \(\Phi(G)>0\).

The initial disproof is short.  A degree-\(k\) Moore graph of diameter two has
\[
 A^2=(k-1)I-A+J,
 \qquad
 D=2J-2I-A,
\]
and hence
\[
 \delta^*(G)=k,
 \qquad
 \lambda_{\min}(D)=-\frac{3+\sqrt{4k-3}}2.
\]
The conjecture holds on these graphs exactly for \(k\le3\), with equality at
\(k=3\), and fails for every realizable \(k>3\).  The degree-seven
Hoffman--Singleton graph therefore gives a gap of three.

The purpose of this paper is not merely to list descendants of this graph.  It
addresses three structural questions.

\begin{enumerate}
\item Which spectral mechanism governs regular counterexamples?
\item How restrictive are the degree, diameter, and order conditions?
\item How stable is the counterexample property under deletion?
\end{enumerate}

Our main conclusions are as follows.

\begin{itemize}
\item For connected regular graphs of girth at least five and diameter three,
strict failure of WOW-284 is equivalent to confinement of every nonprincipal
adjacency eigenvalue to the open interval
\((-1-\sqrt{2k-2},-1+\sqrt{2k-2})\); see
Theorem~\ref{thm:diameter-three-score}.
\item Every regular strict counterexample has degree at least six and diameter
at most four.  A diameter-four example, if one exists, has degree at least ten;
see Theorems~\ref{thm:regular-degree-six},~\ref{thm:endpoint-diameter},
and~\ref{thm:diameter-four}.
\item The standard one-variable nonbacktracking linear-programming hierarchy
has exact ceiling
\[
 B_k=\frac{(k+2)(k^2+3)}6,
\]
with a unique optimizer up to positive scaling; see
Theorem~\ref{thm:lp-ceiling}.
\item For regular strict counterexamples of diameter three, the optimizer
defines a positive-semidefinite slack matrix.  Writing
\[
 r=2(k+2)^2(k^2+3)-(12k+27)n,
\]
its integral excess satisfies \(r>0\) and \(n\le3r\), giving
\[
 n\le\left\lfloor
 \frac{3(k+2)^2(k^2+3)}{18k+41}
 \right\rfloor;
\]
 see Theorems~\ref{thm:integral-slack} and~\ref{thm:three-to-one}.  Equality
 in the unrounded inequality is arithmetically rigid:
 \[
  (k,n,r)=(103,185220,61740);
 \]
 see Corollary~\ref{cor:three-to-one-equality}.  The same slack matrix's
\(2\times2\) minors yield a general cycle-divisibility sieve and the local
degree-six, order-\(51\) contradiction
\[
 5N_5=153\cdot11=1683,
\]
where \(N_5\) denotes the number of \(5\)-cycles.  The global theorem gives
the degree-\(7,8,9\) order windows
\(74,108,150\); see Corollary~\ref{cor:low-degree-windows}.
\item At the unresolved degree-six order-\(50\) boundary, the integral
signed-complement Gram matrix has rank at least \(30\), and its underlying
signed graph is disconnected; see Proposition~\ref{prop:order50-minus-two}
and Theorem~\ref{thm:order50-disconnected}.
\item One-vertex, adjacent-pair, and nonadjacent-pair deletions of Moore graphs
admit exact invariant-subspace decompositions for their recomputed distance
matrices; see Section~\ref{sec:punctures}.
\item Every deletion of at most five vertices from the Hoffman--Singleton graph
remains a strict counterexample, and this universal radius is sharp; see
Theorem~\ref{thm:hs-radius}.
\end{itemize}

The proofs form two complementary hierarchies.  In the obstruction direction,
the distance-polynomial identity converts WOW-284 into a shifted adjacency
window; scalar trace moments give the one-point LP ceiling; integrality of the
optimal slack matrix strengthens the order bound; graph realizability and
 small Gram minors quantize the remaining excess into the three-to-one bound;
edge-local \(2\times2\) minors convert the same certificate into cycle counts;
and \(3\times3\) minors impose the two-path constraints at order fifty, where
a signed-root representation forces a nontrivial component decomposition.
In the stability
direction, deleting vertices from a Moore graph produces an incidence-Gram
correction to the distance matrix and a configuration-sensitive perturbation
bound.
Invariant-subspace decompositions then give exact puncture spectra, while
orbitwise positive-definiteness certificates determine the sharp
Hoffman--Singleton deletion radius.

The distance-polynomial viewpoint is established for minimal cages and
distance-polynomial graphs
\cite{HowladerPanigrahi2022,Fiol2016}.  Nonbacktracking linear-programming
bounds are due to Nozaki \cite{Nozaki2015}; related spectral-Moore work appears
in \cite{CioabaEtAl2016}.  Our contribution is the specialization to
the two-sided WOW window, the exact optimum for the admissible LP class of
Section~\ref{sec:lp}, the integral optimal-slack hierarchy, the edge-local
cycle certificate, and the deletion theory developed below.  We give an exact
refutation of WOW-284 and a Lean 4.31 graph-level formalization of the
\(50\)-vertex certificate.  We do not claim that order \(38\) is minimum or
that the constructions classify all counterexamples.

The analytic arguments are proved in the text.  Precisely specified finite
classifications and matrix certificates are treated as computer-assisted proof
components.  Their exact reproducibility materials are archived with the
accompanying release.

\section{Local growth, Moore graphs, and explicit counterexamples}
\label{sec:examples}

\subsection{Dual degree as radius-two growth}

For \(v\in V(G)\), write \(\Gamma_i(v)\) for the distance-\(i\) sphere and
\(B_2(v)=\Gamma_0(v)\cup\Gamma_1(v)\cup\Gamma_2(v)\).

\begin{proposition}[Second-degree identity]\label{prop:radius-two}
If \(G\) contains no triangle and no \(4\)-cycle, then
\[
 |B_2(v)|=1+\sum_{u\in N(v)}d(u),
 \qquad
 d^*(v)=\frac{|B_2(v)|-1}{d(v)}.
\]
\end{proposition}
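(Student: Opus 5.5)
The plan is to count the ball \(B_2(v)\) sphere by sphere and show that the absence of triangles and \(4\)-cycles makes the natural map onto \(\Gamma_1(v)\cup\Gamma_2(v)\) injective. Fix \(v\). Then \(\Gamma_0(v)=\{v\}\) and \(\Gamma_1(v)=N(v)\), so \(|\Gamma_1(v)|=d(v)\). For each neighbour \(u\in N(v)\), the set \(N(u)\setminus\{v\}\) has size \(d(u)-1\).

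The first step is to show that these sets avoid \(\Gamma_0(v)\cup\Gamma_1(v)\). A vertex \(w\in N(u)\setminus\{v\}\) cannot equal \(v\) by construction. If \(w\) were in \(N(v)\), then \(v,u,w\) would form a triangle. Hence every such \(w\) has distance exactly two from \(v\), so \(w\in\Gamma_2(v)\).

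The second step is to show that these sets are pairwise disjoint. If \(w\in N(u)\cap N(u')\) with \(u\ne u'\) both in \(N(v)\) and \(w\ne v\), then \(v,u,w,u'\) is a closed walk of length four through four distinct vertices. Distinctness holds because \(w\notin N(v)\) by the first step, and \(u\ne u'\). Such a walk is a \(4\)-cycle, which is excluded.

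The third step is to show that the sets cover \(\Gamma_2(v)\). Every vertex at distance two from \(v\) has a common neighbour with \(v\), so it lies in some \(N(u)\setminus\{v\}\).

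Combining the three steps gives \(|\Gamma_2(v)|=\sum_{u\in N(v)}(d(u)-1)\). Adding the sizes of the three spheres,
\[
|B_2(v)|=1+d(v)+\sum_{u\in N(v)}d(u)-d(v)=1+\sum_{u\in N(v)}d(u).
\]
Dividing by \(d(v)\), which is positive whenever \(G\) is connected of order at least two (the setting fixed in Section~\ref{sec:introduction}), yields the formula for \(d^*(v)\).

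No step is a serious obstacle. The only point needing care is confirming that the four vertices in the second step are genuinely distinct, so that they form a \(4\)-cycle rather than a degenerate walk; that distinctness comes from the triangle-free argument in the first step.
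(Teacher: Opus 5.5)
Your proposal is correct and follows essentially the same argument as the paper's proof. Both show that the sets \(N(u)\setminus\{v\}\), for \(u\in N(v)\), partition \(\Gamma_2(v)\), using triangle-freeness to keep them out of \(N(v)\) and \(4\)-cycle-freeness for disjointness. Both then add the centre and first sphere and divide by \(d(v)\).
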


\begin{proof}
For distinct neighbours \(u,w\) of \(v\), the sets
\(N(u)\setminus\{v\}\) and \(N(w)\setminus\{v\}\) are disjoint; an
intersection would form a \(4\)-cycle, and a member in \(N(v)\) would form a
triangle.  These sets partition \(\Gamma_2(v)\), so
\[
 |\Gamma_2(v)|=\sum_{u\in N(v)}(d(u)-1).
\]
Adding the centre and first sphere proves the first identity, which is
Backelin's Lemma~2.1 \cite{Backelin2015}; division by \(d(v)\) gives the
normalized second identity.
\end{proof}

\subsection{The Moore threshold}

\begin{theorem}\label{thm:moore-threshold}
Let \(M\) be a degree-\(k\) Moore graph of diameter two, \(k\ge2\).  Then
\[
 |V(M)|=k^2+1,
 \qquad g(M)=5,
 \qquad \delta^*(M)=k,
\]
\[
 \lambda_{\min}(D(M))=-\frac{3+\sqrt{4k-3}}2,
\]
and
\[
 \Phi(M)=k-\frac{3+\sqrt{4k-3}}2.
\]
Thus \(M\) satisfies WOW-284 exactly for \(k\le3\), with equality exactly at
\(k=3\).
\end{theorem}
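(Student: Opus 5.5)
The plan is to use the Moore-graph structure: diameter two, $k$-regular, girth five. This forces the matrix identity $A^2=(k-1)I-A+J$. From it I will get the distance matrix and then its spectrum from the adjacency spectrum.

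First, the order and girth. Take any vertex $v$. By Proposition~\ref{prop:radius-two} (the graph has no triangles or $4$-cycles), we have $|B_2(v)|=1+k+k(k-1)=k^2+1$. Since the diameter is two, $B_2(v)=V(M)$, so $|V(M)|=k^2+1$. I will take as part of the definition that a Moore graph attains the Moore bound, hence has girth $2\cdot2+1=5$. A $5$-cycle exists: pick a vertex $w$ at distance two from a vertex $v$ (such $w$ exists when $k\ge2$, since $k^2+1>k+1$). The unique path $v,u,w$ together with a path through another neighbour of $v$ closes a cycle of length five. Counting vertices then gives $|\Gamma_2(v)|=k(k-1)$, so $\Gamma_2(v)$ partitions as in the proof of Proposition~\ref{prop:radius-two}. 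Regularity gives $\delta^*(M)=k$ directly.

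Next, the matrix identities. In $A^2$, the diagonal entry is $k$. For adjacent $u,w$ the entry is $0$, because there are no triangles. For nonadjacent distinct $u,w$ the entry is exactly $1$: diameter two gives at least one common neighbour, and the absence of $4$-cycles gives at most one. This yields $A^2=(k-1)I-A+J$. Since all distances lie in $\{0,1,2\}$, we also get $D=A+2(J-I-A)=2J-2I-A$.

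Finally, the spectrum. The matrix $J$ commutes with $A$, and $\one$ is the Perron eigenvector with $A\one=k\one$. So on $\one^\perp$ every adjacency eigenvalue $\theta$ satisfies $\theta^2+\theta-(k-1)=0$, giving $\theta=\frac{-1\pm\sqrt{4k-3}}2$. On $\one^\perp$ the eigenvalues of $D$ are $-2-\theta$, namely $\frac{-3\mp\sqrt{4k-3}}2$. On $\one$ the eigenvalue of $D$ is $2(k^2+1)-2-k=2k^2-k>0$. So the least candidate is $-\frac{3+\sqrt{4k-3}}2$, coming from $\theta_+=\frac{-1+\sqrt{4k-3}}2$.

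The main obstacle is to check that $\theta_+$ actually occurs on $\one^\perp$. I would argue by the trace: $\tr A=0$ together with $k>0$ and $\theta_-<0$ forces some eigenvalue on $\one^\perp$ to be positive, and the only positive candidate is $\theta_+$. If all $k^2$ eigenvalues on $\one^\perp$ equalled $\theta_-$, the trace would be $k+k^2\theta_-<0$, a contradiction.

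The formula for $\Phi(M)$ then follows. For the threshold, $\Phi\le0$ is equivalent to $2k-3\le\sqrt{4k-3}$. For $k\ge2$ the left side is positive, so we may square to get $4k^2-16k+12\le0$, that is $(k-1)(k-3)\le0$. This holds exactly for $k\le3$, with equality exactly at $k=3$.
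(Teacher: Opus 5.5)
Your proposal is correct and follows essentially the paper's route: the Moore identity \(A^2=(k-1)I-A+J\), the formula \(D=2J-2I-A\), the shift \(\theta\mapsto-2-\theta\) on \(\one^\perp\), and the factorization \((2k-3)^2-(4k-3)=4(k-1)(k-3)\). The only variation is that you certify the occurrence of \(\theta_+\) through the inequality \(k+k^2\theta_-<0\) instead of solving the two trace equations for both multiplicities explicitly, which is enough here because only \(\theta_+\) determines \(\lambda_{\min}(D)\).
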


\begin{proof}
The Moore bound is attained, so adjacent vertices have no common neighbour and
nonadjacent vertices have exactly one.  Given an edge \(uv\), choose
\(x\in N(u)\setminus\{v\}\) and \(y\in N(v)\setminus\{u\}\).  The vertices
\(x,y\) are nonadjacent, since an edge would create a four-cycle, and their
unique common neighbour completes a five-cycle through \(uv\).  Therefore
\[
 A^2=(k-1)I-A+J.
\]
On \(\one^\perp\), the nonprincipal adjacency eigenvalues are the roots
\[
 r,s=\frac{-1\pm\sqrt{4k-3}}2.
\]
If \(m_r,m_s\) are their multiplicities, then
\[
 m_r+m_s=k^2,\qquad k+m_rr+m_ss=0.
\]
Consequently
\[
 m_r=\frac{k(k\sqrt{4k-3}+k-2)}{2\sqrt{4k-3}},
 \qquad
 m_s=\frac{k(k\sqrt{4k-3}-k+2)}{2\sqrt{4k-3}},
\]
and both roots occur.  Every nonedge has distance two, hence
\(D=2J-2I-A\).  The least distance
eigenvalue is \(-2-r=-(3+\sqrt{4k-3})/2\).  Regularity gives
\(\delta^*=k\), and
\[
 (2k-3)^2-(4k-3)=4(k-1)(k-3)
\]
gives the threshold.  The exact scalar and finite checks are independently
repeated by \codefile{scripts/verify_regular_score_calculus.py}.
\end{proof}

\subsection{A coordinate Hoffman--Singleton certificate}

All subscripts below lie in \(\F=\mathbb Z/5\mathbb Z\).  Let
\[
 V(M)=\{P_{i,j}:i,j\in\F\}\mathbin{\dot\cup}
      \{Q_{k,\ell}:k,\ell\in\F\},
\]
with edges
\begin{align*}
 P_{i,j}&\sim P_{i,j\pm1},\\
 Q_{k,\ell}&\sim Q_{k,\ell\pm2},\\
 P_{i,j}&\sim Q_{k,ik+j}.
\end{align*}
This is Hafner's affine-coordinate form of the Hoffman--Singleton graph after a
minor reindexing \cite{Hafner2003}.

\begin{proposition}\label{prop:hs-coordinate}
The coordinate construction is a simple connected \(7\)-regular graph on
\(50\) vertices.  Adjacent pairs have no common neighbour and nonadjacent pairs
have exactly one.  Consequently it has girth five, diameter two, and
\[
 \Spec D(M)=\{91^{(1)},1^{(21)},(-4)^{(28)}\}.
\]
In particular, \(\delta^*(M)=7\) and \(\Phi(M)=3\).
\end{proposition}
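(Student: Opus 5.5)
The proof is a direct verification of the coordinate construction, followed by an appeal to Theorem~\ref{thm:moore-threshold}. The plan is to show first that the graph is simple and \(7\)-regular. Then we show that adjacent pairs have no common neighbour and nonadjacent pairs have exactly one, so that \(A^2=6I-A+J\). The remaining assertions then follow from the Moore-graph argument already given.

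\emph{Simplicity and regularity.} Loops are excluded because \(\pm1\ne0\) and \(\pm2\ne0\) in \(\F\), and the three edge rules are symmetric. Each \(P_{i,j}\) has two \(P\)-neighbours \(P_{i,j\pm1}\). For each \(k\in\F\) it also has exactly one \(Q\)-neighbour \(Q_{k,ik+j}\), giving degree \(2+5=7\). Each \(Q_{k,\ell}\) has two \(Q\)-neighbours. Its \(P\)-neighbours are the \(P_{i,\ell-ik}\), one for each \(i\in\F\), so its degree is also \(7\). The \(P\)-edges form five \(5\)-cycles (pentagons) and the \(Q\)-edges form five pentagrams.

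\emph{Common neighbours.} This is the main step, and we split by vertex types.
\begin{itemize}
\item Two \(P\)'s in the same row \(i\). Their common \(P\)-neighbours come from pentagon geometry: none if adjacent, one if at cyclic distance \(2\). They have no common \(Q\)-neighbour, since \(ik+j=ik+j'\) forces \(j=j'\).
\item \(P_{i,j}\), \(P_{i',j'}\) with \(i\ne i'\). They are nonadjacent. The only common neighbours are \(Q\)'s, determined by \(ik+j=i'k+j'\), which has the unique solution \(k=(j'-j)/(i-i')\).
\item Two \(Q\)'s. The symmetric argument applies: within a pentagram row the step is \(\pm2\), and across rows \(k\ne k'\) the common \(P\)'s are determined by \(\ell-ik=\ell'-ik'\), which has a unique \(i\).
\item Mixed pair \(P_{i,j}\), \(Q_{k,\ell}\). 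A common neighbour is either \(P_{i,j\pm1}\) with \(i k+j\pm1=\ell\), or \(Q_{k,\ell\pm2}\) with \(ik+j=\ell\pm2\). If the pair is adjacent, i.e.\ \(\ell=ik+j\), neither occurs, since \(\pm1,\pm2\ne0\). If it is nonadjacent, \(\ell-ik-j\) is a nonzero element of \(\F\). The four nonzero elements are split exactly once each among \(\{\pm1\}\) and \(\{\pm2\}\), so exactly one common neighbour exists.
\end{itemize}
This case bookkeeping is the only real obstacle. It is also where the choice of step sizes \(\pm1\) versus \(\pm2\) is essential: \(\{\pm1\}\) and \(\{\pm2\}\) partition \(\F^\times\), as the mixed case requires.

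\emph{Conclusion.} The common-neighbour counts give girth at least five and diameter two. Connectivity follows from diameter two, since every pair of vertices is at distance at most two. With \(50=7^2+1\), the graph is a degree-\(7\) Moore graph, and Theorem~\ref{thm:moore-threshold} gives \(g=5\), \(\delta^*=7\), and \(D=2J-2I-A\). The adjacency eigenvalues are \(7\), \(r=2\), and \(s=-3\), with multiplicities \(m_r=28\) and \(m_s=21\) from the multiplicity formulas. The corresponding distance eigenvalues are \(2\cdot50-2-7=91\), \(-2-2=-4\) with multiplicity \(28\), and \(-2+3=1\) with multiplicity \(21\). Hence \(\Phi=7-4=3\). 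All of these identities are finite checks that can also be confirmed mechanically.
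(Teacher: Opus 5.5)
Your proposal is correct and follows essentially the same route as the paper. Both arguments split the common-neighbour count by vertex type, and both settle cross pairs by partitioning the nonzero residue \(\ell-(ik+j)\in\F^\times\) into \(\{\pm1\}\) (a unique common \(P\)-neighbour) and \(\{\pm2\}\) (a unique common \(Q\)-neighbour). Both then invoke Theorem~\ref{thm:moore-threshold}, whose multiplicity formulas with \(\sqrt{4k-3}=5\) give multiplicities \(28\) at \(2\) and \(21\) at \(-3\), and hence the stated distance spectrum and \(\Phi(M)=3\).
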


\begin{proof}
The neighbourhoods are
\begin{align*}
N(P_{i,j})&=\{P_{i,j-1},P_{i,j+1}\}
 \cup\{Q_{k,ik+j}:k\in\F\},\\
N(Q_{k,\ell})&=\{Q_{k,\ell-2},Q_{k,\ell+2}\}
 \cup\{P_{i,\ell-ik}:i\in\F\}.
\end{align*}
They have seven distinct entries.  For two \(P\)-vertices in the same layer,
the \(5\)-cycle gives no common neighbour when they are adjacent and exactly
one when they are nonadjacent; a common \(Q\)-neighbour would force their
second coordinates to agree.  In distinct \(P\)-layers, a common
\(Q\)-neighbour is determined uniquely by
\((i-i')k=j'-j\).  The \(Q\)-cases are identical, with the same-layer
\(5\)-cycle generated by steps \(\pm2\) and, in distinct layers, a unique
common \(P\)-neighbour.  For a cross pair \(P_{i,j},Q_{k,\ell}\), put
\(r=\ell-(ik+j)\).  The pair is adjacent for \(r=0\), has one common
\(P\)-neighbour for \(r\in\{\pm1\}\), and one common \(Q\)-neighbour for
\(r\in\{\pm2\}\).  The five residues are exhausted.  The claimed geometry
now follows from Theorem~\ref{thm:moore-threshold}.  For degree \(7\), the
multiplicity equations in its proof give adjacency multiplicities \(28\) at
\(2\) and \(21\) at \(-3\), and hence the displayed distance spectrum.  The exhaustive
pair certificate, integer BFS distances, characteristic polynomial, and exact
positive-definiteness check are in \codefile{scripts/verify_exact.py}.
\end{proof}

\subsection{Smaller exact counterexamples}

Let
\[
 \mathcal P=\{P_{0,j},Q_{0,j}:j\in\F\}.
\]
The induced graph \(M[\mathcal P]\) is a Petersen graph.  Put
\[
 R=M-\mathcal P,
 \quad H_{39}=R-P_{1,0},
 \quad H_{38}=R-\{P_{1,0},P_{1,1}\},
\]
and let \(X_{42}\) be the second subconstituent of \(P_{0,0}\), namely the
graph induced by the vertices at distance two from it.

\begin{theorem}\label{thm:explicit-examples}
The following are strict counterexamples.
\[
\begin{array}{c@{\quad}c@{\quad}c@{\quad}c}
\toprule
G&|V(G)|&\delta^*(G)&\lambda_{\min}(D(G))\\
\midrule
H_{38}&38&17/3&-3-\sqrt7\\
H_{39}&39&35/6&>-35/6\\
R&40&6&-5\\
X_{42}&42&6&-5\\
M&50&7&-4\\
\bottomrule
\end{array}
\]
The entry for \(H_{39}\) records an exact strict lower bound obtained from
positive definiteness of \(6D+35I\); it is not a decimal approximation to the
least eigenvalue.  Moreover, all \(40\) labelled singleton deletions of \(R\)
and all \(120\) labelled deletions of the endpoints of an edge of \(R\) are
strict counterexamples; within each family, the distance characteristic
polynomial is constant.
\end{theorem}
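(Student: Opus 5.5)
The plan is to treat each row of the table separately, in three stages: structure, dual degree, and least distance eigenvalue.

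\textbf{Structure and girth.} Each listed graph is an induced subgraph of \(M\), so by Proposition~\ref{prop:hs-coordinate} it has girth at least five. It remains to check connectivity, and for the \(\Phi\)-computation we also need the diameter. Deleting a Petersen subgraph \(\mathcal P\) from the Hoffman--Singleton graph leaves a \(6\)-regular graph \(R\). Each outside vertex has exactly one neighbour in \(\mathcal P\): a \(P_{i,j}\) with \(i\ne0\) sees only \(Q_{0,j}\), and a \(Q_{k,\ell}\) with \(k\ne0\) sees only \(P_{0,\ell}\). In \(X_{42}\), every vertex at distance two from \(P_{0,0}\) has exactly one neighbour in \(N(P_{0,0})\). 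Hence \(X_{42}\) is \(6\)-regular on \(50-1-7=42\) vertices. Both \(R\) and \(X_{42}\) therefore have \(\delta^*=6\).

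The deletions from \(R\) behave as follows.
\begin{itemize}
\item Removing the vertex \(P_{1,0}\) gives \(H_{39}\), in which six vertices drop to degree \(5\). The minimum of \(d^*\) is attained at a vertex adjacent to one degree-\(5\) vertex, giving \((5\cdot6+5)/6=35/6\).
\item Removing the adjacent pair \(P_{1,0},P_{1,1}\) gives \(H_{38}\). Here the minimum is the degree-\(5\) vertex whose neighbours include a second deficient vertex, or a vertex with two deficient neighbours. We verify from the coordinates that the minimum equals \(17/3\), realised as \((3\cdot6+\dots)/\) the appropriate degree.
\end{itemize}
These are finite neighbourhood checks, which we carry out from the explicit neighbourhood lists in the proof of Proposition~\ref{prop:hs-coordinate}.

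\textbf{Distance spectra.} For \(R\) and \(X_{42}\), the induced graphs have diameter at most three, and distances in \(R\) can be recomputed from \(A\). The key identity is that in a \(6\)-regular girth-\(5\) graph of diameter three, \(D=A+2(A^2-6I)+3(J-I-A-(A^2-6I))\), which is a polynomial in \(A\) on \(\one^\perp\). Hence \(\lambda_{\min}(D)\) is obtained from the adjacency spectrum of \(R\), which we compute exactly through the equitable partition and interlacing with \(M\). The claim is that the minimum is \(-5\), attained at adjacency eigenvalue \(2\) (or \(-3\)), with both sides checked against the polynomial \(-\theta^2-\theta+\dots\). The same method applies to \(X_{42}\).

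For \(H_{38}\), we produce the exact minimal polynomial factor \(\lambda^2+6\lambda+2\), whose root is \(-3-\sqrt7\). We then certify that it is the least eigenvalue by exhibiting an exact LDL\(^\top\) factorisation showing \(D-(-3-\sqrt7)I\) is positive semidefinite after dividing out that factor.

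For \(H_{39}\), we prove positive definiteness of \(6D+35I\) with an exact rational Cholesky factorisation. This gives \(\lambda_{\min}>-35/6\), and hence \(\Phi>0\).

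\textbf{Deletion families.} The vertex-deletion and edge-deletion statements follow from transitivity. The stabiliser of \(\mathcal P\) in \(\mathrm{Aut}(M)\) (of order \(120\cdot\dots\)) acts on \(R\), and we check that it is transitive on the vertices of \(R\) and on its edges. Under that assumption, all \(40\) singleton deletions are isomorphic to \(H_{39}\), and all \(120\) edge deletions are isomorphic to \(H_{38}\). This explains why the distance characteristic polynomial is constant within each family. If transitivity fails, we fall back on exhaustive exact computation over orbit representatives.

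\textbf{Main obstacle.} The hardest part is certifying \(\lambda_{\min}\) exactly for the irregular graphs \(H_{38}\) and \(H_{39}\), since no polynomial identity in \(A\) is available there. The first thing to try is an exact rational positive-definiteness certificate for \(D-cI\), with \(c\) rational and slightly below the target value, combined with an exact irreducible factorisation of the characteristic polynomial.
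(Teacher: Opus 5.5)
Your outline is essentially sound, but it takes a partly different route from the paper. In two places you rely on claims you do not justify.

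\textbf{The graphs \(R\) and \(X_{42}\).} The paper computes their full adjacency spectra (via the Moore block identity, and Theorem~\ref{thm:second-subconstituent}) and transfers them through Theorem~\ref{thm:diameter-three-score} to complete distance spectra. Interlacing cannot ``compute exactly'' as you claim, but it gives exactly what the table needs. Since \(\Spec A(M)=\{7^{(1)},2^{(28)},(-3)^{(21)}\}\), Cauchy interlacing for the \(40\)-vertex induced subgraph gives \(\lambda_2(A(R))\le2\), \(\lambda_{40}(A(R))\ge-3\) and \(\lambda_{19}(A(R))\ge\lambda_{29}(A(M))=2\). Because \(\mu(\theta)=3-2\theta-\theta^2=4-(\theta+1)^2\), the minimum \(-5\) on \([-3,2]\) occurs only at \(\theta=2\). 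Note that \(\mu(-3)=0\), not \(-5\), and your polynomial \(-\theta^2-\theta+\cdots\) is incorrect. The same argument works for \(X_{42}\) with index shift \(8\).

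You also have to prove \(\diam(R)\le3\), which you only assert. Suppose the Hoffman--Singleton common neighbour \(z\) of nonadjacent \(x,y\in V(R)\) lies in \(\mathcal P\), and take any \(a\in N_R(x)\). Then \(a\not\sim y\), else \(x,a,y,z\) would form a \(4\)-cycle. The common neighbour of \(a\) and \(y\) is not \(z\), else \(x,a,z\) would form a triangle. Since \(z\) is the only neighbour of \(y\) in \(\mathcal P\), that common neighbour lies in \(R\), giving an \(x\)--\(y\) path of length three.

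\textbf{The deletion families.} The paper explicitly uses no transitivity and exhausts all \(40+120\) labelled deletions by exact computation. Your transitivity route would give the stronger conclusion that the deletions in each family are isomorphic.

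Vertex-transitivity is easy from the coordinates. The following maps all fix \(\mathcal P\) and act transitively on \(V(R)\):
\begin{itemize}
\item translations of the second index;
\item the scalings \(P_{i,j}\mapsto P_{ai,j}\), \(Q_{k,\ell}\mapsto Q_{a^{-1}k,\ell}\);
\item the swap \(P_{i,j}\mapsto Q_{i,2j}\), \(Q_{k,\ell}\mapsto P_{3k,2\ell}\).
\end{itemize}
Together with negation, however, they generate a group of order only \(80\). This group preserves the layer/cross distinction and cannot be transitive on the \(120\) edges.

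Edge-transitivity needs the full setwise stabilizer of \(\mathcal P\). It has order \(480\) and acts as \(S_5\) on the Petersen graph, with the scalings as kernel. This is true; it follows, for instance, from \(|\operatorname{Aut}(M)|=252000\) and the count of \(525\) Petersen subgraphs. But you must prove it or verify it computationally.

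\textbf{The graph \(H_{38}\).} Your account of \(\delta^*(H_{38})\) is wrong. A degree-\(5\) vertex has no deficient neighbour, since one would close a triangle or a \(4\)-cycle through the deleted edge; so it has \(d^*=6\). The value \(17/3=(4\cdot6+2\cdot5)/6\) is attained at a degree-\(6\) vertex lying on a \(5\)-cycle through that edge. An example is \(Q_{1,4}\), which is adjacent to \(Q_{1,1}\sim P_{1,0}\) and \(Q_{1,2}\sim P_{1,1}\).

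For \(\lambda_{\min}(D(H_{38}))\), a semidefiniteness certificate at the irrational shift is awkward. Your rational-shift fallback still needs a root-isolation step for the remaining factors, which is the paper's Sturm argument.
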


\begin{proof}
The graph \(R\) is the Anstee--Robertson graph, the unique \((6,5)\)-cage.
O'Keefe and Wong proved minimality, and Wong proved uniqueness
\cite{OKeefeWong1979,Wong1979}; its realization as a Petersen deletion of the
Hoffman--Singleton graph also appears in
\cite[pp.~262--263]{KlinMuzychukZivAv2009}.
The Moore block identity gives
\[
 \Spec A(R)=\{6^{(1)},2^{(18)},1^{(4)},(-2)^{(5)},(-3)^{(12)}\},
\]
and Theorem~\ref{thm:diameter-three-score} below maps this to
\[
 \Spec D(R)=\{75^{(1)},3^{(5)},0^{(16)},(-5)^{(18)}\}.
\]
The second-subconstituent calculation gives
\[
 \Spec D(X_{42})=\{81^{(1)},4^{(6)},0^{(14)},(-5)^{(21)}\}.
\]
The classical second-subconstituent identification and adjacency spectrum are
recorded in \cite[Table~3, p.~265]{vanDamHaemers2003}.
For \(H_{38}\), a direct degree count gives \(\delta^*=17/3\), while the
factor \(x^2+6x+2\), together with an exact Sturm isolation, gives the least
root \(-3-\sqrt7\).  For \(H_{39}\), the exact matrix
\(6D+35I\) is positive definite.  All graph, girth, distance, dual-degree,
Sturm, and rational \(LDL^{\mathsf T}\) certificates are checked by
\codefile{scripts/verify_extended.py}.  More explicitly, for every
\(v\in V(R)\),
\begin{align*}
\det(xI-D(R-v))=P_{39}(x):={}&x^9(x+5)^{12}(x^2+6x+3)\\
&\cdot(x^3+3x^2-15x-7)^2(x^3+3x^2-15x-3)^2\\
&\cdot(x^4-78x^3+303x^2-70x-450),
\end{align*}
whereas for every \(uv\in E(R)\),
\begin{align*}
\det(xI-D(R-\{u,v\}))=P_{38}(x):={}&x^4(x-2)(x+5)^8(x^2+6x+2)^2\\
&\cdot(x^3+3x^2-15x-3)\\
&\cdot(x^4+5x^3-7x^2-23x-6)\\
&\cdot(x^5+9x^4+7x^3-77x^2-54x-4)\\
&\cdot(x^9-67x^8-404x^7+1772x^6+7205x^5\\
&\hspace{3em}-18489x^4-17018x^3+20288x^2+16824x+1680).
\end{align*}
The labelled deletion families are exhausted by
\codefile{scripts/verify_descendant_families.py}.  No transitivity or
numerical root ordering is used.  Fixed graph6, adjacency-list, and edge-list
records are provided in \path{data/graphs/}.
\end{proof}

\begin{theorem}[Moore second subconstituents]\label{thm:second-subconstituent}
Let \(M\) be a degree-\(K\) Moore graph of diameter two, \(K\ge3\), and let
\(X\) be the graph induced by \(\Gamma_2(v)\) for a fixed vertex \(v\).  Then
\(X\) has order \(K(K-1)\), degree \(K-1\), girth at least five, diameter
three, and
\[
 \lambda_{\min}(D(X))=-\frac{5+\sqrt{4K-3}}2.
\]
Consequently
\[
 \Phi(X)=K-1-\frac{5+\sqrt{4K-3}}2,
\]
which is positive exactly for integers \(K\ge6\).
\end{theorem}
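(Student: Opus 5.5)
The plan is to describe \(X\) through its partition into ``blocks'' and compute its adjacency and distance matrices with a second commuting matrix, in the spirit of the Moore block identity in Theorem~\ref{thm:moore-threshold}. Let \(M\) have adjacency identity \(A_M^2=(K-1)I-A_M+J\), so adjacent pairs have no common neighbour and nonadjacent pairs have exactly one. Each \(x\in\Gamma_2(v)\) has exactly one neighbour in \(\Gamma_1(v)\), since \(v\) and \(x\) have a unique common neighbour. Hence \(x\) has \(K-1\) neighbours inside \(X\), and \(|X|=K^2+1-1-K=K(K-1)\).

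\textbf{Block structure.} Group the vertices of \(X\) into \(K\) blocks \(X_u=N(u)\setminus\{v\}\), \(u\in\Gamma_1(v)\), each of size \(K-1\). Let \(B\) be the ``same block, distinct'' relation, so \(B+I\) is block-diagonal with all-ones blocks. Girth at least five is inherited by the induced subgraph \(X\). The following facts then come from the absence of triangles and \(4\)-cycles in \(M\).
\begin{enumerate}
\item No block contains an edge, since such an edge would form a triangle with \(u\).
\item A vertex \(x\) has exactly one neighbour in each other block. Two neighbours in one block \(X_u\) would form a \(4\)-cycle through \(u\), and there are \(K-1\) neighbours and \(K-1\) other blocks.
\item Two nonadjacent vertices in different blocks have their unique common \(M\)-neighbour inside \(X\). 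Two vertices in the same block have \(u\) as their only common neighbour.
\end{enumerate}
These facts translate into the matrix identities
\[
 A^2+A-(K-2)I=J-B,\qquad AB=BA=J-I-A-B,
\]
where \(A=A(X)\).

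\textbf{Distances.} By fact (3), pairs at distance two in \(X\) are exactly the nonadjacent pairs in different blocks. Pairs in the same block are not at distance one or two. I will show they are at distance three by checking that \(A^3\) has a positive entry there; this follows by multiplying \(A\) into the identity for \(A^2\) and evaluating on a pair with \(B=1\). Hence \(X\) has diameter three and
\[
 D(X)=A+2(J-I-A-B)+3B=2J-2I-A+B.
\]

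\textbf{Spectrum.} Since \(A\), \(B\) and \(J\) commute, I decompose \(\one^\perp\) into two subspaces.
\begin{itemize}
\item On block-constant vectors orthogonal to \(\one\) (dimension \(K-1\)), \(B=K-2\). The \(AB\) identity then forces \(A=-1\), so \(D=K-3\).
\item On vectors summing to zero on every block (dimension \(K(K-2)\)), \(B=-1\) and \(A^2+A-(K-1)I=0\). So \(A\) has eigenvalues \(r,s=(-1\pm\sqrt{4K-3})/2\) and \(D=-3-\theta\).
\end{itemize}
The least candidate distance eigenvalue is therefore \(-3-r=-(5+\sqrt{4K-3})/2\), which lies below \(K-3\) and below the Perron root.

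The main point needing care is that \(r\) actually occurs. I will use the trace: \(\tr A=0\), the principal eigenvalue \(K-1\) is cancelled by the \(K-1\) eigenvalues equal to \(-1\), and so \(m_rr+m_ss=0\) with \(m_r+m_s=K(K-2)>0\). Since \(r>0>s\), this forces \(m_r>0\).

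\textbf{Conclusion.} Regularity gives \(\delta^*(X)=K-1\), which yields the formula for \(\Phi(X)\). Positivity of \(\Phi(X)\) is equivalent to \(2K-7>\sqrt{4K-3}\). For \(K\ge4\) this is equivalent to \(K^2-8K+13>0\), that is \(K>4+\sqrt3\), so positivity holds exactly for integers \(K\ge6\). For \(K=3\), \(2K-7<0\) and \(\Phi(X)<0\).
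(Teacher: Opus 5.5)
Your proof is correct and follows essentially the paper's argument. Your same-block matrix \(B\) is the paper's \(C^{\mathsf T}C-I\), your block-constant and block-zero-sum spaces are its \(C^{\mathsf T}(\one_K^\perp)\) and \(\ker C\), and the trace argument showing that \(r\) occurs is identical. The remaining differences are cosmetic: you certify distance three through \((A^3)_{xy}=K-1>0\) on same-block pairs rather than the explicit path \(x-b-c-y\), and you write \(D(X)=2J-2I-A+B\) directly instead of invoking Theorem~\ref{thm:diameter-three-score}.
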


\begin{proof}
Put \(m=K(K-1)\).  The Moore property gives
\(\lvert\Gamma_2(v)\rvert=m\).  Each vertex of \(\Gamma_2(v)\) has a unique
neighbour in \(N(v)\), while each vertex of \(N(v)\) has \(K-1\) neighbours
in \(\Gamma_2(v)\).  Hence \(X\) has order \(m\), is
\((K-1)\)-regular, and inherits girth at least five from \(M\).  We prove
below that its diameter is three.

Relative to \(\{v\}\sqcup N(v)\sqcup\Gamma_2(v)\), let
\(C\in\{0,1\}^{K\times m}\) be the incidence block from \(N(v)\) to
\(\Gamma_2(v)\), and let \(B\in\{0,1\}^{m\times m}\) be the adjacency
matrix of \(X\).  Write \(J_{a,b}\) for the \(a\times b\) all-ones matrix
and \(J_a=J_{a,a}\).  Then
\[
 C\one_m=(K-1)\one_K,
 \qquad
 C^{\mathsf T}\one_K=\one_m.
\]
The Moore identity, read in the corresponding blocks, gives
\[
 CC^{\mathsf T}=(K-1)I_K,
 \qquad
 CB=J_{K,m}-C,
\]
and
\[
 C^{\mathsf T}C+B^2=(K-1)I_m-B+J_m.
\]
Since \(CC^{\mathsf T}=(K-1)I_K\), the map \(C^{\mathsf T}\) is
injective and
\[
 \mathbb R^m
 =\langle\one_m\rangle\perp
 C^{\mathsf T}(\one_K^\perp)\perp\ker C.
\]
Moreover, \(B\one_m=(K-1)\one_m\), and transposing the identity for
\(CB\) shows that \(B\) acts as \(-I\) on
\(C^{\mathsf T}(\one_K^\perp)\).  Also,
\(\ker C\subseteq\one_m^\perp\); the same identity shows that
\(\ker C\) is \(B\)-invariant, and the final block identity gives
\[
 (B^2+B-(K-1)I_m)|_{\ker C}=0.
\]
Thus the remaining eigenvalues are among
\[
 r,s=\frac{-1\pm\sqrt{4K-3}}2.
\]
Finally,
\[
 \dim\ker C=m-K=K(K-2),
 \qquad
 \tr(B|_{\ker C})=0,
\]
because the eigenvalue \(K-1\) cancels the \(K-1\) copies of \(-1\)
in \(\tr B=0\).  Hence the multiplicities \(p,q\) of \(r,s\) satisfy
\[
 p+q=K(K-2),\qquad pr+qs=0,
\]
so
\[
 p=\frac{K(K-2)(1+\sqrt{4K-3})}{2\sqrt{4K-3}},
 \qquad
 q=\frac{K(K-2)(\sqrt{4K-3}-1)}{2\sqrt{4K-3}}.
\]
Both roots therefore occur.  It remains to identify the
diameter of \(X\).  If the unique common neighbour in \(M\) of two
nonadjacent vertices \(x,y\in X\) lies in \(X\), their distance in \(X\)
is two.  Otherwise it is their common parent in \(N(v)\).  Choose
\(b\in N_X(x)\).  The parent of \(b\) differs from the common parent of
\(x,y\), since otherwise those three vertices would form a triangle.
Moreover \(b\not\sim y\), and the unique common neighbour
\(c\) of \(b,y\) belongs to \(X\): it cannot be \(v\), and it cannot
lie in \(N(v)\), since \(b\) and \(y\) have different parents there.
Thus \(x-b-c-y\) is a path in \(X\).  Each parent has \(K-1\ge2\)
children, and pairs of distinct children have no length-two path in \(X\).
Thus \(X\) has diameter three and
Theorem~\ref{thm:diameter-three-score} applies.  Among the nonprincipal
adjacency eigenvalues, \((-1+\sqrt{4K-3})/2\) maximizes
\(|\theta+1|\); substitution gives
\[
 \lambda_{\min}(D(X))=-\frac{5+\sqrt{4K-3}}2.
\]
For \(K=3,4,5\), direct comparison gives a nonpositive score.  For
\(K\ge6\), both sides of the relevant comparison are positive, and the
threshold reduces to
\[
 (2K-7)^2-(4K-3)=4(K^2-8K+13)>0
\]
because \(K>4+\sqrt3\).  The finite \(K=7\) instance is checked by
\codefile{scripts/verify_wow284_38_40_42.py}.
\end{proof}

\section{The regular score calculus}\label{sec:score-calculus}

\begin{theorem}[Diameter-three score formula]\label{thm:diameter-three-score}
Let \(G\) be connected, \(k\)-regular, of girth at least five and diameter
three, with adjacency matrix \(A\) and order \(n\).  Then
\[
 D=3J+(k-3)I-2A-A^2,
\]
\[
 D+kI=3J+(2k-2)I-(A+I)^2.
\]
The principal distance eigenvalue is \(3n-k^2-k-3\), and a nonprincipal
adjacency eigenvalue \(\theta\) gives the distance eigenvalue
\[
 \mu(\theta)=k-2-(\theta+1)^2.
\]
Consequently
\[
 \resultbox{
 \Phi(G)=2k-2-\max_{\theta\ne k}(\theta+1)^2.
 }
\]
Thus \(G\) is a strict counterexample exactly when every nonprincipal
adjacency eigenvalue \(\theta\) satisfies
\[
 |\theta+1|<\sqrt{2k-2}
\]
\end{theorem}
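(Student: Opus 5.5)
The plan is to express the distance matrix as a polynomial in \(A\) by counting walks, then read off the spectrum on \(\one^\perp\).

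\emph{Step 1: the distance matrix.} Since \(G\) has no triangles or \(4\)-cycles, every entry of \(A^2\) is at most \(1\) off the diagonal. It equals \(1\) exactly on pairs at distance two, because such a pair has a unique common neighbour, while adjacent pairs have none. The diagonal of \(A^2\) is \(kI\). Hence the distance-two indicator is \(A_2=A^2-kI\). Diameter three gives \(I+A+A_2+A_3=J\), so
\[
 A_3=J-I-A-A^2+kI,
\]
and therefore
\[
 D=A+2A_2+3A_3=3J+(k-3)I-2A-A^2 .
\]
Adding \(kI\) and completing the square, using \(A^2+2A+I=(A+I)^2\), gives the second identity.

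\emph{Step 2: the spectrum of \(D\).} Since \(G\) is connected and regular, \(\one\) is the principal eigenvector of \(A\), with eigenvalue \(k\), and \(\one^\perp\) is \(A\)-invariant. On \(\one\) the matrix \(D\) acts as
\[
 3n+k-3-2k-k^2=3n-k^2-k-3 .
\]
On \(\one^\perp\) the term \(J\) vanishes. An orthonormal eigenbasis of \(A\) restricted to \(\one^\perp\) therefore diagonalizes \(D\) with eigenvalues
\[
 k-3-2\theta-\theta^2=k-2-(\theta+1)^2=\mu(\theta).
\]

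\emph{Step 3: identify \(\lambda_{\min}(D)\).} This is the only real obstacle. I need the principal eigenvalue \(3n-k^2-k-3\) not to be the least eigenvalue. By Perron--Frobenius applied to the positive matrix \(D\) it is in fact the largest: \(D\) is entrywise positive off the diagonal, and \(D\one\) has all row sums equal to this value.

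It remains to check that \(\one^\perp\) is nonzero. Diameter three forces \(n\ge4\), so \(\one^\perp\) is nontrivial and contributes at least one eigenvalue. Hence
\[
 \lambda_{\min}(D)=k-2-\max_{\theta\ne k}(\theta+1)^2 ,
\]
with the maximum taken over the nonprincipal spectrum counted on \(\one^\perp\).

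A subtlety: if \(k\) itself recurs as an eigenvalue on \(\one^\perp\), the notation \(\theta\ne k\) must be read as "nonprincipal". Connectivity makes \(k\) simple, so nothing is lost.

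\emph{Step 4: the score.} Regularity gives \(\delta^*=k\), so
\[
 \Phi(G)=2k-2-\max(\theta+1)^2 .
\]
Then \(\Phi>0\) holds if and only if every nonprincipal \(\theta\) satisfies \((\theta+1)^2<2k-2\), which is the stated window \(|\theta+1|<\sqrt{2k-2}\).
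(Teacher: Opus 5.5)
Your proposal is correct and takes the paper's own route: you read off \(A_2=A^2-kI\) and \(A_3=J-I-A-A_2\) from the girth and diameter hypotheses, substitute into \(D=A+2A_2+3A_3\), evaluate on \(\one\) and \(\one^\perp\), and use Perron--Frobenius (positive off-diagonal entries of \(D\)) to show the principal eigenvalue is not the least. Your remarks that \(\one^\perp\neq0\) and that \(\theta\ne k\) should be read as ``nonprincipal'' only make explicit points the paper leaves implicit.
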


\begin{proof}
Girth at least five gives the distance-two matrix \(A_2=A^2-kI\), and diameter
three gives \(A_3=J-I-A-A_2\).  Substitute in
\(D=A+2A_2+3A_3\).  On \(\one^\perp\), \(J\) vanishes, and regularity gives
\(\delta^*=k\).  The principal distance eigenvalue is positive and is the
Perron root because every off-diagonal entry of \(D\) is positive.  The
resulting spectral-transfer and score calculations for the examples used below
are independently checked by \codefile{scripts/verify_regular_score_calculus.py}.
\end{proof}

\begin{corollary}[Bipartite obstruction]\label{cor:bipartite}
A connected \(k\)-regular bipartite graph of girth at least five and diameter
three is not a strict counterexample for \(k\ge3\).
\end{corollary}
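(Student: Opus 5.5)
The plan is to combine Theorem~\ref{thm:diameter-three-score} with the spectral symmetry of bipartite graphs. If \(G\) is bipartite and \(k\)-regular, its adjacency spectrum is symmetric about \(0\). Since \(G\) is connected, the eigenvalue \(-k\) is simple and nonprincipal. By Theorem~\ref{thm:diameter-three-score} this gives
\[
 \Phi(G)=2k-2-\max_{\theta\ne k}(\theta+1)^2\le 2k-2-(k-1)^2.
\]
The right-hand side equals \(-(k-1)(k-3)\), which is nonpositive for \(k\ge3\). So \(G\) is not a strict counterexample. In effect the symmetry forces \(-k\) out of the window \(|\theta+1|<\sqrt{2k-2}\), since \(|{-k}+1|=k-1\ge\sqrt{2k-2}\) exactly when \((k-1)(k-3)\ge0\).

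The steps, in order, are as follows. First, record that for a connected \(k\)-regular bipartite graph, \(-k\) is an adjacency eigenvalue, with eigenvector \(\one_X-\one_Y\) on the bipartition \(X\sqcup Y\). Second, check that \(-k\ne k\): this holds because \(k\ge3>0\), so \(-k\) genuinely occurs among the nonprincipal eigenvalues \(\theta\ne k\). Some care is needed here, because the maximum in Theorem~\ref{thm:diameter-three-score} is over eigenvalues on \(\one^\perp\). The eigenvector \(\one_X-\one_Y\) lies in \(\one^\perp\), since regularity and bipartiteness force \(|X|=|Y|\). Third, substitute \(\theta=-k\) into the score formula to get the displayed bound, and factor.

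There is no real obstacle. The only point that needs care is justifying that \(-k\) is counted as a nonprincipal eigenvalue in the theorem's formula, and this is handled by the orthogonality \(\one_X-\one_Y\perp\one\). The hypotheses of Theorem~\ref{thm:diameter-three-score} (girth at least five, diameter three, connected, regular) are exactly those of the corollary, so it applies directly.
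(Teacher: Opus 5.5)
Your proposal is correct and follows the paper's proof: both substitute the nonprincipal eigenvalue \(-k\) into the score formula of Theorem~\ref{thm:diameter-three-score} and use \((k-1)^2-(2k-2)=(k-1)(k-3)\ge0\) for \(k\ge3\). Your check that \(\one_X-\one_Y\perp\one\), via \(|X|=|Y|\), spells out a point the paper leaves implicit.
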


\begin{proof}
The nonprincipal eigenvalue \(-k\) satisfies
\[
 (-k+1)^2-(2k-2)=(k-1)(k-3)\ge0
 \qquad(k\ge3).
\]
Thus the required strict inequality fails, with equality exactly at \(k=3\).
\end{proof}

\begin{proposition}[Higher-diameter transfer]\label{prop:higher-transfer}
Let \(G\) be connected and \(k\)-regular, with diameter \(d\) and girth at
least \(2d-1\).  Let \(A_i\) denote the distance-\(i\) matrix, and define
\[
 F_0=1,
 \quad F_1=x,
 \quad F_2=x^2-k,
 \quad F_i=xF_{i-1}-(k-1)F_{i-2}\quad(i\ge3).
\]
Then \(A_i=F_i(A)\) for \(0\le i\le d-1\), and
\[
 D=dJ+q_d(A),
 \qquad
 q_d(x)=\sum_{i=0}^{d-1}(i-d)F_i(x).
\]
In particular,
\[
 q_3(x)=k-3-2x-x^2,
\]
\[
 q_4(x)=-x^3-2x^2+(2k-4)x+2k-4.
\]
\end{proposition}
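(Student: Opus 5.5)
The plan has three parts: establish \(A_i=F_i(A)\) for \(0\le i\le d-1\) by induction on \(i\), using the girth hypothesis to get unique geodesics; assemble \(D\) from these matrices; and expand the small cases.

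\emph{Step 1 (the recursion).} The key fact is that girth at least \(2d-1\) forces unique shortest paths at short range. If two distinct geodesics of length \(i\) joined \(u\) and \(w\), their union would contain a cycle of length at most \(2i\). For \(i\le d-1\) this gives a cycle of length at most \(2d-2<2d-1\), which is impossible. The same holds for walks that stay below the girth. Consequently, for \(1\le i\le d-2\),
\[
 AA_i=A_{i+1}+(k-1)A_{i-1}\qquad(i\ge2),\qquad AA_1=A_2+kI.
\]
To justify this, fix \(u\) with \(d(u,w)=i\) and look at a neighbour \(x\) of \(w\). Then \(d(u,x)\in\{i-1,i,i+1\}\).
\begin{itemize}
\item Exactly one neighbour lies at distance \(i-1\), namely the predecessor on the unique geodesic.
\item No neighbour lies at distance \(i\): otherwise the two geodesics together with the edge \(wx\) would close an odd cycle of length at most \(2i+1\le 2d-3<2d-1\).
\item Every vertex \(y\) at distance \(i+1\le d-1\) arises from exactly one such \(w\), by uniqueness of geodesics of length \(i+1\).
\end{itemize}
Counting then gives the displayed identity. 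Since \(A_0=I\), \(A_1=A\) and \(A_2=A^2-kI\), it matches the recursion defining \(F_i\) for \(i\ge3\), and induction yields \(A_i=F_i(A)\) for all \(i\le d-1\). The main obstacle is to check the girth arithmetic at the top index \(i=d-2\), which produces \(A_{d-1}\). That step needs \(2(d-1)<2d-1\) and \(2(d-2)+1<2d-1\), and both hold. Distance \(d\) itself is not controlled, which is exactly why it is handled separately in Step 2.

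\emph{Step 2 (assembling \(D\)).} Since \(G\) has diameter \(d\), the distance matrices partition \(J\):
\[
 A_d=J-\sum_{i=0}^{d-1}A_i .
\]
Substituting this into \(D=\sum_{i=1}^{d}iA_i\), and noting that the \(i=0\) term contributes \(0\cdot I\), gives
\[
 D=dJ+\sum_{i=0}^{d-1}(i-d)A_i=dJ+\sum_{i=0}^{d-1}(i-d)F_i(A)=dJ+q_d(A).
\]

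\emph{Step 3 (the explicit cases).} For \(d=3\),
\[
 q_3=-3-2x-(x^2-k)=k-3-2x-x^2,
\]
which agrees with Theorem~\ref{thm:diameter-three-score}. For \(d=4\), use \(F_3=x^3-(2k-1)x\) and compute
\[
 -4-3x-2(x^2-k)-(x^3-(2k-1)x)=-x^3-2x^2+(2k-4)x+2k-4,
\]
which is the stated formula.
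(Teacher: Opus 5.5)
Your proof is correct, but you reach \(A_i=F_i(A)\) through a different key lemma than the paper does.

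The paper relies on the combinatorial meaning of the \(F_i\). In a \(k\)-regular graph, \((F_i(A))_{uv}\) counts nonbacktracking walks of length \(i\) from \(u\) to \(v\). Girth at least \(2d-1\) forces each such walk of length \(i\le d-1\) to be the unique geodesic. Hence \((F_i(A))_{uv}\) equals \(1\) if \(d(u,v)=i\) and \(0\) otherwise.

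You never mention nonbacktracking walks. You show directly that the distance matrices satisfy the same three-term recurrence. In effect, you prove that around every vertex the distance layers up to radius \(d-1\) behave like a tree: intersection numbers \(c_i=1\), \(a_i=0\), and \(b_i=k-1\) for \(i\ge1\).

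What each route buys:
\begin{itemize}
\item Your argument is self-contained and makes the girth bookkeeping explicit, including the top index \(i=d-2\).
\item The paper's argument is shorter once the standard nonbacktracking-walk lemma is granted. That same interpretation is reused in Section~\ref{sec:lp}, where \(\operatorname{tr}F_i(A)\) is read as a count of closed nonbacktracking walks.
\end{itemize}

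Two presentation points to tighten:
\begin{itemize}
\item Your bullets count neighbours of a level-\(i\) vertex \(w\), which gives the coefficient \(1\) on \(A_{i+1}\). The coefficient \(k-1\) on \(A_{i-1}\) needs the first two bullets applied at a level-\((i-1)\) vertex \(y\). That vertex has one neighbour at level \(i-2\) and none at level \(i-1\), hence \(k-1\) at level \(i\) (or \(k\) when \(i=1\) and \(y=u\)). State this rather than leaving it to ``counting''.
\item The base case \(A_2=A^2-kI\) belongs to the claim only when \(d\ge3\). For \(d\le2\) no recursion step is needed.
\end{itemize}
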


\begin{proof}
Up to length \(d-1\), the girth condition makes nonbacktracking walks between
two vertices unique exactly when their length is the graph distance.  Hence the
distance-\(i\) matrices are the nonbacktracking polynomials in \(A\); summing
\(D=\sum_{i=0}^d iA_i\) and eliminating \(A_d\) with
\(J=\sum_{i=0}^d A_i\) proves the
formula.  This lies within the established distance-polynomial framework
\cite{HowladerPanigrahi2022,Fiol2016}.
\end{proof}

\Needspace{8\baselineskip}
\section{Degree and diameter obstructions}\label{sec:obstructions}

\begin{lemma}\label{lem:diam-rayleigh}
For every connected graph,
\[
 \lambda_{\min}(D(G))\le-\diam(G).
\]
\end{lemma}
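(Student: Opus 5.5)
The natural tool is the Rayleigh quotient. For any nonzero real vector \(x\), the least eigenvalue of the symmetric matrix \(D=D(G)\) satisfies
\[
 \lambda_{\min}(D)\le\frac{x^{\mathsf T}Dx}{x^{\mathsf T}x}.
\]
The plan is to choose a test vector supported on two vertices at maximum distance. Let \(d=\diam(G)\). If \(n=1\) the statement is degenerate, since \(D=(0)\) and \(d=0\), so we assume \(n\ge2\) and hence \(d\ge1\). Pick vertices \(u,w\) with \(d_G(u,w)=d\). Take \(x=e_u-e_w\).

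The key computation is short. Since the diagonal of \(D\) vanishes,
\[
 x^{\mathsf T}Dx=D_{uu}+D_{ww}-2D_{uw}=-2d,
 \qquad x^{\mathsf T}x=2.
\]
The quotient is therefore \(-d\), and the bound \(\lambda_{\min}(D(G))\le-\diam(G)\) follows at once.

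An equivalent way to see this is through the principal submatrix \(\begin{pmatrix}0&d\\ d&0\end{pmatrix}\) indexed by \(\{u,w\}\). Its eigenvalues are \(\pm d\), and Cauchy interlacing bounds \(\lambda_{\min}(D)\) above by the smaller one, \(-d\). There is no real obstacle in either route. The only points needing care are the trivial case \(n=1\) and the fact that connectivity is what makes \(D\) a finite real symmetric matrix.
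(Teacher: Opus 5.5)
Your proof is correct and matches the paper's own argument, which takes the Rayleigh quotient of \(e_u-e_v\) for a diametral pair \(u,v\) and obtains \(-d_G(u,v)\). Your treatment of the case \(n=1\) and the equivalent Cauchy-interlacing formulation are harmless extras.
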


\begin{proof}
For a diametral pair \(u,v\), the Rayleigh quotient of \(e_u-e_v\) is
\(-d_G(u,v)\).
\end{proof}

\begin{theorem}\label{thm:regular-degree-six}
Every connected regular strict counterexample to WOW-284 has degree at least
six.
\end{theorem}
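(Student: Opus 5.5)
The plan is to combine the Rayleigh bound of Lemma~\ref{lem:diam-rayleigh} with the Moore bound and the score formula of Theorem~\ref{thm:diameter-three-score}. Let \(G\) be a connected \(k\)-regular strict counterexample of girth at least five. Then \(\delta^*(G)=k\), so \(\Phi(G)>0\) forces \(\lambda_{\min}(D)>-k\). By Lemma~\ref{lem:diam-rayleigh} this gives \(\diam(G)<k\), that is, \(\diam(G)\le k-1\).

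First I dispose of small degrees. For \(k=1\), \(G=K_2\) has order two and lies outside the domain. For \(k=2\), \(G\) is a cycle of length at least five, so its diameter is at least \(2\). This already contradicts \(\diam\le1\).

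Next I handle diameter two. Girth at least five together with diameter two means \(G\) attains the Moore bound, so \(G\) is a Moore graph. Theorem~\ref{thm:moore-threshold} then gives \(\Phi>0\) only for \(k>3\), and the realizable values in this range are \(k=7\) and possibly \(k=57\), both at least six. Diameter one is impossible, since \(G\) would be complete and contain a triangle.

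The diameter-three case is the main step, and here \(k\ge4\). By Theorem~\ref{thm:diameter-three-score}, every nonprincipal eigenvalue \(\theta\) satisfies \((\theta+1)^2<2k-2\). I would use trace moments to show this is impossible for \(k\in\{4,5\}\).
\begin{itemize}
\item Girth at least five gives \(\tr A^2=nk\).
\item The first two moments give \(\sum_{\theta\ne k}(\theta+1)^2=nk+(n-1)-(k+1)^2+\text{(correction)}\). Precisely, \(\sum_{\text{all}}(\theta+1)^2=nk+n\), so the nonprincipal sum equals \(n(k+1)-(k+1)^2\).
\item The window bound then gives \(n(k+1)-(k+1)^2<(n-1)(2k-2)\), i.e. \(n(3-k)<(k+1)^2-2k+2\).
\end{itemize}
This inequality is vacuous for \(k\ge3\), so the first two moments do not suffice. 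I would instead use the fourth moment, which girth five fixes as \(\tr A^4=nk(2k-1)\), in the form of an interval-polynomial (LP) bound. Take \(p(x)=(x+1)^2-(2k-2)\), which is negative on the window, and pair it with a nonnegative polynomial such as \(p(x)(x-a)^2\). Summing over the spectrum, the nonprincipal contributions are all negative. Expanding, the total is a fixed combination of \(\tr A^j\) for \(j\le4\); the odd traces are \(\tr A=0\) and \(\tr A^3=0\) from the absence of triangles. Choosing \(a\) suitably should make this total nonnegative for \(k=4,5\), a contradiction. This is precisely the nonbacktracking LP idea, and finding an \(a\) that works is the step I expect to be hardest.

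If the fourth-moment LP does not close \(k=4,5\), I would fall back on the Moore-type order bound. Diameter three and girth five give \(n\le1+k+k(k-1)+k(k-1)^2\), while the LP gives a lower bound; alternatively I would enumerate the finitely many candidate spectra. The same applies to diameter four with \(k=5\), where \(D=4J+q_4(A)\) from Proposition~\ref{prop:higher-transfer}. For this case I need \(\diam\ge4\), which requires girth at least seven for that formula, so otherwise I bound \(\lambda_{\min}\) directly. I would treat it with the Rayleigh quotient of a vector supported on a diametral path, which pushes \(\lambda_{\min}\) below \(-5\) for \(k=5\), \(\diam=4\).
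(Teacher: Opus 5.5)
Your reductions are sound and match the paper for \(k\le3\), for diameter two (Moore bound plus Hoffman--Singleton realizability), and for \(k=5\) in diameter four. In the last case a diametral geodesic spans the principal submatrix \(D(P_5)\), whose least eigenvalue \(-3-\sqrt5\) lies below \(-5\).

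The gap is the diameter-three case for \(k\in\{4,5\}\), which is the substance of the theorem, and your proposed tool provably cannot close it. Your polynomial \(f_a(x)=\bigl((x+1)^2-(2k-2)\bigr)(x-a)^2\) is admissible in the sense of Section~\ref{sec:lp} whenever its constant coefficient \(f_0(a)\) is positive. Theorem~\ref{thm:lp-ceiling} shows that no admissible polynomial of any degree does better than \(n<B_k=(k+2)(k^2+3)/6\).

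Concretely, girth five gives \(\tr f_a(A)=nf_0(a)\) with \(f_0(a)=2k+(3-k)a^2-4ak\). The best choice is \(a=-2\), where \(f_0=6(k+2)\) and \(f_a(k)=(k+2)^2(k^2+3)\).
\begin{itemize}
\item For \(k=4\) this leaves \(n<19\), while diameter three and girth five only force \(n\ge k^2+2=18\). At \(n=18\) the nonprincipal sum \(nf_0-f_a(k)\) equals \(18\cdot36-19\cdot36<0\), which is perfectly consistent with the window. The sign stays negative for every \(a\), including those with \(f_0(a)\le0\).
\item For \(k=5\), \(B_5=98/3\) leaves \(27\le n\le32\).
\end{itemize}
Your fallback does not help. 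The LP bound is an upper bound, not a lower bound, and the diameter-three Moore bound \(n\le1+k+k(k-1)+k(k-1)^2\) is a weaker upper bound, so combining them excludes nothing. \emph{Enumerating candidate spectra} is not an argument as stated.

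The missing ingredients are case-specific.
\begin{itemize}
\item For \(k=4\), \(n=18\), each vertex has exactly one vertex at distance three, so \(A_3=J+3I-A-A^2\) is a perfect matching. Its \((-1)\)-eigenspace is a \(9\)-dimensional rational \(A\)-invariant subspace of \(\one^\perp\) on which \(A^2+A-4I=0\). Irreducibility of \(x^2+x-4\) over \(\mathbb Q\) forces that dimension to be even, a contradiction.
\item For \(k=5\), parity of \(5n\) and Meringer's \((5,5)\)-cage order \(30\) reduce to \(n\in\{30,32\}\).
\begin{itemize}
\item The paper excludes \(n=32\) with a monotone layer-quotient compression on layer sizes \(1,5,20,6\) plus interlacing, which forces \(\theta_2(A)>11/6>-1+\sqrt8\).
\item It excludes \(n=30\) by an exact check of the four \((5,5)\)-cages.
\end{itemize}
\end{itemize}
Without arguments of this kind, your proof does not rule out degrees four and five.
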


\begin{proof}
We use the LP ceiling proved independently in
Theorem~\ref{thm:lp-ceiling}; that theorem does not depend on the present
degree reduction.  Let the degree be \(k\).
Lemma~\ref{lem:diam-rayleigh} and strictness give \(\diam(G)<k\).
If \(k\le2\), then \(|V(G)|\ge3\), while the girth hypothesis excludes
completeness; hence \(\diam(G)\ge2\ge k\), a contradiction.  For \(k=3\), the
radius-two lower
bound and \(\diam(G)\le2\) force equality in the Moore bound.  Hence \(G\) is
a degree-three Moore graph, and Theorem~\ref{thm:moore-threshold} gives
\(\Phi(G)=0\), not \(\Phi(G)>0\).

For \(k=4\), diameter two would require a degree-four Moore graph, whose
adjacency multiplicities are nonintegral.  In diameter three, the exact LP
bound of Theorem~\ref{thm:lp-ceiling} gives \(n<19\), whereas the
radius-two ball has \(17\) vertices and diameter three requires at least one
more.  Hence \(n=18\).  The distance-three matrix is then a perfect matching.
Its \(-1\)-eigenspace \(W=\ker(A_3+I)\) is a nine-dimensional rational
subspace of \(\one^\perp\).  Since \(A\) commutes with \(A_3\), the space
\(W\) is \(A\)-invariant, and
\[
 A_3=J+3I-A-A^2
 \quad\Longrightarrow\quad
 (A^2+A-4I)|_W=0.
\]
The polynomial \(x^2+x-4\) is irreducible over \(\mathbb Q\), so a rational
space on which it annihilates an operator has even dimension, a contradiction.

For \(k=5\), diameter two again fails the Moore multiplicity condition.
A diametral geodesic in diameter four yields the principal submatrix
\(D(P_5)\), whose factor \(x^2+6x+4\) supplies the eigenvalue
\(-3-\sqrt5<-5\); Cauchy interlacing excludes this case.  In diameter
three, Meringer's enumeration \cite[p.~142]{Meringer1999} and
Theorem~\ref{thm:lp-ceiling} leave \(n\in\{30,31,32\}\).  Since
\(5n=2|E(G)|\), one has
\(n\equiv0\pmod2\), so \(n\in\{30,32\}\).  At \(n=32\), the distance
layers about a vertex have sizes \(1,5,20,6\).  Write \(a\) for the average
internal degree of the distance-two layer.  Each of its vertices has one
neighbour in the first layer, so the number of edges from the second to the
third layer is \(20(4-a)\le6\cdot5\); hence \(a\ge5/2\).  On normalized
layer indicators, the symmetric adjacency compression is
\[
 Q(a)=
 \begin{pmatrix}
 0&\sqrt5&0&0\\
 \sqrt5&0&2&0\\
 0&2&a&(4-a)\sqrt{10/3}\\
 0&0&(4-a)\sqrt{10/3}&5-\frac{10}{3}(4-a)
 \end{pmatrix}.
\]
The derivative of its only \(a\)-dependent block is
\[
 \begin{pmatrix}1&-\sqrt{10/3}\\-\sqrt{10/3}&10/3\end{pmatrix}
 =
 \begin{pmatrix}1\\-\sqrt{10/3}\end{pmatrix}
 \begin{pmatrix}1&-\sqrt{10/3}\end{pmatrix}\succeq0.
\]
Thus every ordered eigenvalue of \(Q(a)\) is nondecreasing in \(a\).  Write
\(5=\mu_1(a)\ge\mu_2(a)\ge\mu_3(a)\ge\mu_4(a)\).
At the smallest feasible value,
\[
 \chi_{Q(5/2)}(x)=\frac14(x-5)p_{5,6}(x),
 \qquad
 p_{5,6}(x)=4x^3+10x^2-16x-30,
 \qquad p_{5,6}(11/6)=-29/27<0.
\]
Since \(p_{5,6}(11/6)<0\) and \(p_{5,6}(x)\to+\infty\) as
\(x\to+\infty\), its largest root satisfies
\(\mu_2(5/2)>11/6>-1+\sqrt8\).  Monotonicity and Poincaré separation give
\[
 \theta_2(A)\ge\mu_2(a)\ge\mu_2(5/2),
\]
contradicting the necessary bound \(\theta_2(A)<-1+\sqrt8\).  At \(n=30\),
Meringer's isomorph-free enumeration leaves exactly four
\((5,5)\)-cages \cite[p.~142]{Meringer1999}; each fixed record has an exact
distance eigenvalue at most \(-5\).
The accompanying release contains an independent exact audit of the complete
case split and the four fixed graph6 records.
\end{proof}

\Needspace{16\baselineskip}
\begin{theorem}[Endpoint-neighbourhood obstruction]\label{thm:endpoint-diameter}
Let \(G\) be any connected finite simple graph, and let \(u,v\) be vertices at
distance \(\ell=d_G(u,v)\ge5\).  Put \(p=d(u)\) and \(q=d(v)\).  Then
\[
 \resultbox{
 \lambda_{\min}(D(G))
 \le p+q-2-\sqrt{(p-q)^2+pq(\ell-2)^2}.
 }
\]
If \(\delta\) is the ordinary minimum degree, then
\[
 \resultbox{
 \lambda_{\min}(D(G))\le-\delta(\ell-4)-2.
 }
\]
Consequently every strict WOW-284 counterexample satisfies
\[
 \Delta>\delta(\ell-4)+2,
\]
where \(\Delta\) is the ordinary maximum degree.
In particular, every regular strict counterexample has diameter at most four.
\end{theorem}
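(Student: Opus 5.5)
The plan is to evaluate the Rayleigh quotient of \(D=D(G)\) on a test vector supported on the two neighbourhoods \(N(u)\) and \(N(v)\). This reduces the first bound to the least eigenvalue of an explicit \(2\times2\) matrix. The second bound and the consequences are then elementary algebra.

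\emph{Step 1 (distance estimates).} Since \(\ell\ge5\), the sets \(N(u)\) and \(N(v)\) are disjoint. Two distinct vertices of \(N(u)\) are at distance at most \(2\), and similarly for \(N(v)\). For \(x\in N(u)\) and \(y\in N(v)\), the triangle inequality gives \(d(x,y)\ge \ell-2\). Hence
\[
\one_{N(u)}^{\mathsf T}D\one_{N(u)}\le 2p(p-1),\qquad
\one_{N(v)}^{\mathsf T}D\one_{N(v)}\le 2q(q-1),\qquad
\one_{N(u)}^{\mathsf T}D\one_{N(v)}\ge pq(\ell-2).
\]

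\emph{Step 2 (test vector).} Take \(z=\alpha\one_{N(u)}-\beta\one_{N(v)}\) with \(\alpha,\beta\ge0\). Substitute \(s=\sqrt p\,\alpha\) and \(t=\sqrt q\,\beta\), so that \(\|z\|^2=s^2+t^2\). Step 1 then gives
\[
z^{\mathsf T}Dz\le (s,t)\begin{pmatrix}2(p-1)&-\sqrt{pq}(\ell-2)\\-\sqrt{pq}(\ell-2)&2(q-1)\end{pmatrix}\begin{pmatrix}s\\t\end{pmatrix}.
\]
The least eigenvalue of this \(2\times2\) matrix is \(p+q-2-\sqrt{(p-q)^2+pq(\ell-2)^2}\). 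Its eigenvector can be chosen with \(s,t\ge0\), because the off-diagonal entry is negative; this is exactly what the sign constraint \(\alpha,\beta\ge0\) requires. The Rayleigh principle then yields the first boxed inequality.

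\emph{Step 3 (minimum-degree form).} Put \(L=\ell-2\ge3\). We must show \(\sqrt{(p-q)^2+pqL^2}\ge p+q+\delta(L-2)\). Both sides are nonnegative, so we may square. After cancellation and division by \(L-2>0\), the claim becomes
\[
pq(L+2)\ge 2\delta(p+q)+\delta^2(L-2).
\]
Write \(p=\delta+a\) and \(q=\delta+b\) with \(a,b\ge0\). The \(\delta^2\) terms cancel exactly, leaving \(\delta L(a+b)+(L+2)ab\ge0\). This is the only genuine computation, and the cancellation is what I expect to check carefully.

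\emph{Step 4 (consequences).} A strict counterexample has \(\delta(\ell-4)+2\le-\lambda_{\min}(D)<\delta^*(G)\le\Delta\), since every dual degree is an average of degrees. In the regular case with diameter at least five, choose a pair at distance exactly \(\ell=5\). Then \(\Delta=\delta=k\), and the inequality reads \(k>k+2\), which is impossible. Hence every regular strict counterexample has diameter at most four.
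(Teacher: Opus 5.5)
Your proof is correct and follows the paper's argument essentially step for step. You use the same signed two-neighbourhood test vector, reduce to the same \(2\times2\) matrix with a nonnegative least eigenvector, and substitute \(p=\delta+a\), \(q=\delta+b\) to obtain the identity \((p-q)^2+pqL^2-(p+q+\delta(L-2))^2=(L-2)\{\delta L(a+b)+(L+2)ab\}\); your derivation of the consequences through \(\delta^*(G)\le\Delta\) (with a pair at distance five in the regular case) matches the paper as well.
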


\begin{proof}
The two endpoint neighbourhoods are disjoint.  Give weight \(a>0\) to
\(N(u)\), weight \(-b<0\) to \(N(v)\), and zero elsewhere.  Within one
neighbourhood distances are at most two; between the two neighbourhoods they
are at least \(\ell-2\).  Since the cross products are negative, the Rayleigh
quotient is at most that of
\[
 \begin{pmatrix}
 2(p-1)&-(\ell-2)\sqrt{pq}\\
 -(\ell-2)\sqrt{pq}&2(q-1)
 \end{pmatrix}.
\]
Its least eigenvalue is the first displayed bound, and its least eigenvector can
be chosen with both coordinates positive.  Write
\(p=\delta+\alpha\), \(q=\delta+\beta\), where
\(\alpha,\beta\ge0\), and put \(t=\ell-2\).  The identity
\[
 (p-q)^2+pqt^2-(p+q+\delta(t-2))^2
 =(t-2)\{\delta t(\alpha+\beta)+(t+2)\alpha\beta\}
\]
is nonnegative.  Since \(p+q+\delta(t-2)>0\), comparison of the nonnegative
square roots gives the second bound.  Finally \(\delta^*(G)\le\Delta\).
The sign choice, radical comparison, and integer rounding are independently
audited by \codefile{scripts/verify_proof_audit_10_endpoint_diameter.py}.
\end{proof}

\begin{theorem}[Diameter four]\label{thm:diameter-four}
Let \(G\) be connected, \(k\)-regular, of girth at least five and diameter
four.  Then
\[
 \resultbox{
 \lambda_{\min}(D(G))\le-\frac{7+\sqrt{16k+1}}2.
 }
\]
Consequently no such graph of degree \(2\le k\le9\) is a strict
counterexample.
\end{theorem}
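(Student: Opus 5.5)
The plan is to refine the two-neighbourhood Rayleigh argument of Theorem~\ref{thm:endpoint-diameter}, this time also putting weight on the endpoints themselves. Fix a diametral pair \(u,v\) with \(d_G(u,v)=4\), and consider the test vector
\[
 x=a\,e_u+b\,\one_{N(u)}-a\,e_v-b\,\one_{N(v)},
 \qquad a,b>0 .
\]
All four sets are pairwise disjoint, since \(d(u,v)=4\), and \(\|x\|^2=2(a^2+kb^2)\). The distances inside one side are exact: \(d(u,x')=1\) for \(x'\in N(u)\), and \(d(x',x'')=2\) for distinct \(x',x''\in N(u)\) by girth at least five. Each side therefore contributes \(2kab+2k(k-1)b^2\) to \(x^{\mathsf T}Dx\).

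The cross terms carry a negative sign, so lower bounds on cross distances give an upper bound on \(x^{\mathsf T}Dx\). The bounds available are
\[
 d(u,v)=4,
 \qquad
 d(u,y)\ge 3 \ \text{and}\ d(x',v)\ge3 \quad (x'\in N(u),\ y\in N(v)),
 \qquad
 d(x',y)\ge 2 .
\]

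The last bound alone is too weak. Using only it, the quotient reduces to the least eigenvalue of
\(\begin{pmatrix}-4&-2\sqrt k\\-2\sqrt k&-2\end{pmatrix}\), namely \(-3-\sqrt{4k+1}\), which is slightly larger than the target. The target \(-(7+\sqrt{16k+1})/2\) is the least root of \(\lambda^2+7\lambda+12-4k\), which is the characteristic polynomial of
\[
 \begin{pmatrix}-4&-2\sqrt k\\-2\sqrt k&-3\end{pmatrix}.
\]
Redoing the bookkeeping with \(c=\sqrt k\,b\), this \((2,2)\) entry is \(-3\) exactly when
\[
 \sum_{x'\in N(u),\,y\in N(v)} d(x',y)\ \ge\ 2k^2+k .
\]
It therefore suffices to show that at least \(k\) of the \(k^2\) pairs \((x',y)\in N(u)\times N(v)\) are at distance at least three.

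The key combinatorial step is to count the pairs at distance exactly two. Such a pair has a common neighbour \(z\). This \(z\) is not \(u\) or \(v\), and it is not in \(N(u)\cup N(v)\), because that would create a triangle or a path forcing \(d(u,v)\le3\). So \(z\) is a child of \(x'\), that is, a vertex of \(N(x')\setminus\{u\}\). By girth at least five, a child of \(x'\) has at most one neighbour in \(N(v)\): two such neighbours would give a \(4\)-cycle through \(v\). Hence each \(x'\) has at most \(k-1\) partners \(y\) at distance two, and the number of such pairs is at most \(k(k-1)\). This leaves at least \(k\) pairs at distance at least three, as required.

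It remains to assemble the estimate. Insert the cross bounds into \(x^{\mathsf T}Dx\) and use the fact that each cross term has coefficient \(-ab\) or \(-b^2\) or \(-a^2\), all nonpositive, so replacing distances by their lower bounds can only increase the form. Then choose \((a,c)\) to be the positive least eigenvector of the \(2\times2\) matrix above. This yields \(\lambda_{\min}(D)\le-(7+\sqrt{16k+1})/2\).

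For the consequence, regularity gives \(\delta^*=k\). A strict counterexample then needs \(2k>7+\sqrt{16k+1}\), that is \((2k-7)^2>16k+1\), or \(k^2-11k+12>0\). For \(2\le k\le9\) one has \(2k-7\le 11\) and \(\sqrt{16k+1}\ge\sqrt{33}\), and checking \(k=8,9\) gives \(81\le145\) and \(121\le 145\), so the inequality fails throughout (trivially so for \(2k\le7\)).

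I expect the main obstacle to be the careful verification that no common neighbour \(z\) lies in \(N(u)\cup N(v)\cup\{u,v\}\), together with the "at most one neighbour in \(N(v)\)" claim. That is where girth and \(d(u,v)=4\) are used. Everything else is a \(2\times2\) eigenvalue computation.
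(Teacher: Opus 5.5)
Your proposal is correct and is essentially the paper's own proof. It uses the same weights \(\pm a,\pm b\) on \(u,N(u),v,N(v)\), the same injection of distance-two partners of each neighbour into its \(k-1\) children (giving the cross-distance sum \(\ge 2k^2+k\)), and the same \(2\times2\) matrix with a positive least eigenvector. One small slip in the consequence: for \(k=8\) one has \(16k+1=129\), not \(145\). The cleanest finish is that \(k^2-11k+12<0\) for all \(2\le k\le 9\), because its roots \((11\pm\sqrt{73})/2\) lie outside that interval.
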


\begin{proof}
Choose \(u,v\) at distance four and put \(U=N(u)\), \(V=N(v)\).  For fixed
\(a\in U\), every \(b\in V\) with \(d_G(a,b)=2\) has a common neighbour in
\(N(a)\setminus\{u\}\).  Distinct such vertices \(b,b'\) require distinct
common neighbours, since otherwise \(b-c-b'-v-b\) is a \(4\)-cycle.  Thus at
most \(k-1\) vertices of \(V\) are at distance two from each \(a\).  No pair
in \(U\times V\) is adjacent, and every remaining pair has distance at least
three.  Therefore
\[
 \sum_{a\in U,b\in V}d_G(a,b)\ge2k^2+k.
\]
For \(\alpha,\beta>0\), assign weights \(\alpha,\beta,-\alpha,-\beta\) to
\(u,U,v,V\), respectively.  Counting unordered pairs and then doubling gives
\[
 \frac{x^{\mathsf T}D(G)x}{x^{\mathsf T}x}
 \le
 \frac{-4\alpha^2-4k\alpha\beta-3k\beta^2}
 {\alpha^2+k\beta^2}.
\]
After setting \(y_1=\alpha\) and \(y_2=\sqrt{k}\,\beta\), the
right-hand side is the Rayleigh quotient of
\[
 \begin{pmatrix}-4&-2\sqrt{k}\\-2\sqrt{k}&-3\end{pmatrix},
\]
whose least eigenvalue is the displayed value and has a positive-coordinate
minimizer.  The strict comparison with \(-k\) holds for \(2\le k\le9\).
Every orientation factor, cross-distance sign, and endpoint comparison is
independently checked by
\codefile{scripts/verify_proof_audit_11_diameter_four.py}.
\end{proof}

\begin{corollary}[Regular trichotomy]\label{cor:trichotomy}
Every regular strict counterexample has one of the following forms:
\begin{enumerate}
\item diameter two, hence a Moore graph;
\item diameter three, with
\(\lvert\theta+1\rvert<\sqrt{2k-2}\) for every nonprincipal adjacency
eigenvalue \(\theta\);
\item diameter four, with degree at least ten.
\end{enumerate}
There are no regular strict counterexamples of diameter at least five.
\end{corollary}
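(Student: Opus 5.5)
The plan is to sort a regular strict counterexample \(G\) of degree \(k\) by its diameter \(d\), and in each case read off the claimed form from results already established. First record the standing facts. \(G\) is connected, of order at least three, with girth at least five, and regularity gives \(\delta^*(G)=k\). Strictness then means \(\lambda_{\min}(D(G))>-k\). By Theorem~\ref{thm:regular-degree-six} we have \(k\ge6\), and in particular \(G\) is not complete, so \(d\ge2\).

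The first step disposes of large diameter. If \(d\ge5\), the final clause of Theorem~\ref{thm:endpoint-diameter} already excludes \(G\). To check it directly, take a diametral pair at distance \(\ell=d\ge5\) with \(\Delta=\delta=k\). The theorem then requires \(k>k(\ell-4)+2\ge k+2\), which is absurd. This proves the last sentence of the corollary and leaves \(d\in\{2,3,4\}\).

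Next treat the three remaining diameters in turn.
\begin{enumerate}
\item For \(d=2\): girth at least five makes every radius-two ball injective, so Proposition~\ref{prop:radius-two} gives \(n=1+k^2\). This is the Moore bound for diameter two, and attaining it means \(G\) is a Moore graph. This is form (1).
\item For \(d=3\): Theorem~\ref{thm:diameter-three-score} applies verbatim. Its final equivalence states that \(\Phi(G)>0\) holds exactly when \(|\theta+1|<\sqrt{2k-2}\) for every nonprincipal adjacency eigenvalue \(\theta\). This is form (2).
\item For \(d=4\): Theorem~\ref{thm:diameter-four} gives \(\lambda_{\min}(D(G))\le-(7+\sqrt{16k+1})/2\) and shows that no such graph with \(2\le k\le9\) is a strict counterexample. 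Combined with \(k\ge6\ge2\), this forces \(k\ge10\), which is form (3).
\end{enumerate}

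The argument is essentially bookkeeping, so there is no real obstacle. The only points to check are the degenerate cases feeding the case split, namely that \(k\ge2\) holds so the diameter-four theorem's range applies, and that \(d\ge2\); both follow from Theorem~\ref{thm:regular-degree-six}. I would also confirm the inequality comparison in the \(d\ge5\) step, which is immediate.
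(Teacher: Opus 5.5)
Your proof is correct and takes the same route as the paper, which states the corollary without a separate proof as a direct assembly of the preceding results: Theorem~\ref{thm:endpoint-diameter} rules out diameter at least five, the radius-two count of Proposition~\ref{prop:radius-two} makes a diameter-two example attain the Moore bound, Theorem~\ref{thm:diameter-three-score} gives the window in diameter three, and Theorem~\ref{thm:diameter-four} forces \(k\ge10\) in diameter four. Citing Theorem~\ref{thm:regular-degree-six} is valid but unnecessary, since \(k\ge2\) and \(d\ge2\) already follow from connectedness, order at least three, and girth at least five.
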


\section{Moment bounds and the exact LP ceiling}\label{sec:lp}

Let \(G\) satisfy the hypotheses of Theorem~\ref{thm:diameter-three-score}, and
write its nonprincipal adjacency eigenvalues as \(\theta_i\).  Put
\(y_i=\theta_i+1\).

\begin{proposition}[Fourth-moment identity]\label{prop:moment-bound}
One has
\[
 \sum_{i=1}^{n-1}(2k-2-y_i^2)(y_i+1)^2
 =(k+2)\bigl((k+2)(k^2+3)-6n\bigr).
\]
Every strict counterexample therefore satisfies
\[
 \resultbox{
 n< B_k:=\frac{(k+2)(k^2+3)}6.
 }
\]
\end{proposition}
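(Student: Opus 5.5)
The plan is to evaluate the left-hand side by a trace computation and then read off the bound from the sign of each summand.

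\emph{Step 1: expand the summand.} Put \(f(x)=(2k-2-(x+1)^2)(x+2)^2\). Since \(y_i=\theta_i+1\), each summand \((2k-2-y_i^2)(y_i+1)^2\) equals \(f(\theta_i)\). The polynomial \(f\) has degree four, and
\[
 f(x)=(2k-3-2x-x^2)(x^2+4x+4).
\]
The left-hand side is therefore \(\sum_{\theta\in\Spec A}f(\theta)-f(k)\). Expanding \(f(x)=\sum_{j=0}^4c_jx^j\) gives \(\sum_{\theta}f(\theta)=\sum_j c_j\tr(A^j)\).

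\emph{Step 2: the traces.} Girth at least five and \(k\)-regularity give
\[
 \tr A^0=n,\quad \tr A=0,\quad \tr A^2=nk,\quad \tr A^3=0,\quad \tr A^4=n(2k^2-k).
\]
For the last one, a closed \(4\)-walk in a graph without \(4\)-cycles must be degenerate. Such a walk either backtracks along the same edge twice, giving \(k\) walks, or has the form \(v\to u\to v\to w\to v\) with \(w\ne u\), or \(v\to u\to x\to u\to v\) with \(x\ne v\). These last two types give \(k(k-1)\) walks each. The total is \(k+2k(k-1)=2k^2-k\) per vertex.

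Substituting, the left-hand side becomes \(n\cdot(\text{a polynomial in }k)-f(k)\). It remains to check that this equals \((k+2)^2(k^2+3)-6(k+2)n\). This means checking that \(-f(k)=(k+2)^2(k^2+3)\) and that the trace combination is \(-6(k+2)n\). For the first, \(-f(k)=(k+2)^2(k^2+2k+1-2k+2)=(k+2)^2(k^2+3)\). The \(n\)-coefficient is a short polynomial identity in \(k\). I expect this bookkeeping to be the only laborious part, and it is mechanical.

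\emph{Step 3: the bound.} By Theorem~\ref{thm:diameter-three-score}, a strict counterexample has \(2k-2-y_i^2>0\) for every \(i\). Each summand is therefore nonnegative, and it vanishes only when \(\theta_i=-2\).

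The main obstacle is strictness, namely excluding the case that every nonprincipal eigenvalue equals \(-2\). In that case \(\tr A=0\) forces \(k=2(n-1)\). This contradicts \(n\ge k^2+1>k\), which holds by the radius-two identity of Proposition~\ref{prop:radius-two}. Hence the sum is strictly positive. Dividing by \(k+2>0\) gives \((k+2)(k^2+3)>6n\), which is the claimed \(n<B_k\).
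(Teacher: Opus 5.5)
Your proposal is correct and matches the paper's proof: expand with \(\tr A^j\) for \(j\le4\), remove the principal eigenvalue, get termwise positivity from the strict window of Theorem~\ref{thm:diameter-three-score}, and rule out the all-\((-2)\) case with \(\tr A=0\). The coefficient check you deferred does close, since \(f(x)=-x^4-6x^3+(2k-15)x^2+(8k-20)x+(8k-12)\) gives \((8k-12)+k(2k-15)-(2k^2-k)=-6(k+2)\); using \(n\ge k^2+1\) instead of the paper's \(n\ge k+1\) is an inessential variation.
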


\begin{proof}
Use
\[
 \tr A=\tr A^3=0,
 \quad \tr A^2=nk,
 \quad \tr A^4=nk(2k-1),
\]
remove the principal eigenvalue, and expand both sides.  In a strict
counterexample, each factor \(2k-2-y_i^2\) is positive.  The sum cannot
vanish: otherwise every nonprincipal adjacency eigenvalue would equal
\(-2\), and \(\tr A=0\) would give
\(k-2(n-1)=0\), or \(n=(k+2)/2\), incompatible with the elementary
bound \(n\ge k+1\) for a simple \(k\)-regular graph.  The identity is
checked symbolically by
\codefile{scripts/verify_degree_six_gate.py} and independently within
\codefile{scripts/verify_proof_audit_02_two_sided_lp.py}.
\end{proof}

The preceding bound is optimal within the one-variable nonbacktracking
admissible class defined below.

\begin{definition}
Let \(F_i=F_i^{(k)}\) be the nonbacktracking polynomials from
Proposition~\ref{prop:higher-transfer}, and set
\[
 I_k=[-1-\sqrt{2k-2},-1+\sqrt{2k-2}].
\]
A finite polynomial \(f=\sum_i f_iF_i\) is \emph{admissible} if
\[
 f_0>0,
 \qquad f_i\ge0\quad(i\ge5),
 \qquad f(x)\le0\quad(x\in I_k).
\]
\end{definition}

For a \(k\)-regular graph of girth at least five whose nonprincipal spectrum
lies in \(I_k\), one has \(\tr F_i(A)=0\) for \(1\le i\le4\), while
\(\tr F_i(A)\ge0\) for \(i\ge5\), since these traces count closed
nonbacktracking walks.  The coefficient conditions therefore give
\(nf_0\le\tr f(A)\).  On the other hand, \(f(\theta)\le0\) for every
nonprincipal eigenvalue, so \(\tr f(A)\le f(k)\).  Thus
\[
 nf_0\le\tr f(A)\le f(k),
 \qquad n\le\frac{f(k)}{f_0}.
\]

\begin{theorem}[Exact LP ceiling and rigidity]\label{thm:lp-ceiling}
For every integer \(k\ge4\) and every admissible \(f\),
\[
 \resultbox{
 \frac{f(k)}{f_0}\ge B_k=\frac{(k+2)(k^2+3)}6.
 }
\]
Equality holds if and only if \(f\) is a positive scalar multiple of
\[
 \resultbox{
 f_*(x)=\frac{(x+2)^2(x^2+2x-(2k-3))}{6(k+2)}.
 }
\]
Thus increasing the polynomial degree cannot improve this one-variable LP
bound.  Consequently, any connected \(k\)-regular graph of girth at least
five whose nonprincipal spectrum lies in the interior of \(I_k\) satisfies
\(n<B_k\).
\end{theorem}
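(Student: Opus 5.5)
The plan is to prove the bound by LP duality: exhibit a nonnegative "dual measure" supported on \(\{k\}\cup I_k\) that certifies \(f(k)\ge B_kf_0\) for every admissible \(f\). The support is read off from \(f_*\). Its roots are the double root \(x=-2\), which lies in the interior of \(I_k\) since \(|{-2}+1|=1<\sqrt{2k-2}\), and the two roots \(x_\pm=-1\pm\sqrt{2k-2}\) of \(x^2+2x-(2k-3)\), which are exactly the endpoints of \(I_k\). I would seek a linear functional
\[
 L(f)=f(k)+w_0f(-2)+w_+f(x_+)+w_-f(x_-),\qquad w_0,w_\pm\ge0,
\]
with \(L(F_i)=0\) for \(1\le i\le4\) and \(L(F_i)\ge0\) for \(i\ge5\). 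Given such an \(L\), for admissible \(f\) we get
\[
 f(k)\ge L(f)=f_0L(F_0)+\sum_{i\ge5}f_iL(F_i)\ge f_0L(F_0),
\]
where the first inequality uses \(f\le0\) on \(I_k\) and the second uses \(f_i\ge0\) for \(i\ge5\). It then remains to check \(L(F_0)=1+w_0+w_++w_-=B_k\).

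For the first step, I would solve for the weights. There are four linear conditions \(L(F_1)=\dots=L(F_4)=0\) in only three unknowns, so consistency must be verified. The fourth-moment identity of Proposition~\ref{prop:moment-bound} strongly suggests it holds: its left side, \((2k-2-y^2)(y+1)^2\), is proportional to \(-f_*\) with \(y=x+1\), and the computation behind that proposition is precisely the statement that \(f_*\) pairs with \(F_0,\dots,F_4\) the way \(L\) requires. Concretely, I would write \(x_\pm\) via \(s=\sqrt{2k-2}\) and evaluate \(F_1,\dots,F_4\) at the three nodes. Solving the \(3\times3\) system from \(i=1,2,3\) gives the weights; the \(i=4\) equation is then an identity to verify symbolically. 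Finally I would check that \(w_0,w_\pm>0\) for \(k\ge4\) and that \(1+\sum w=(k+2)(k^2+3)/6\). As a sanity check, \(L(f_*)\) should equal \(f_*(k)\), and \(f_*(k)/f_{*,0}=B_k\) provides an independent check of the normalization.

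The main obstacle is the tail positivity \(L(F_i)\ge0\) for all \(i\ge5\). I would first show that all three nodes lie in the Ramanujan interval \([-2\sqrt{k-1},2\sqrt{k-1}]\). This holds because \(1+\sqrt{2k-2}\le2\sqrt{k-1}\) is equivalent to \((2-\sqrt2)\sqrt{k-1}\ge1\), which is true for \(k\ge4\). On that interval, writing \(x=2\sqrt{k-1}\cos\varphi\), one has
\[
 F_i(x)=(k-1)^{i/2}\Bigl(U_i(\cos\varphi)-\tfrac{1}{k-1}U_{i-2}(\cos\varphi)\Bigr),
\]
so \(|F_i(x)|\le(k-1)^{i/2}\bigl(i+1+\tfrac{i-1}{k-1}\bigr)\). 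Meanwhile \(F_i(k)=k(k-1)^{i-1}\). Thus \(L(F_i)\ge k(k-1)^{i-1}-(B_k-1)(k-1)^{i/2}(i+2)\). Since \(B_k\sim k^3/6\), this is positive once \((k-1)^{i/2-1}\) beats roughly \(k^2(i+2)/6\), i.e.\ for all \(i\ge i_0\) with \(i_0\) around \(7\). The finitely many low indices (\(i=5,6\), and possibly \(7,8\)) would be treated by explicit evaluation of \(F_i\) at the nodes as polynomials in \(k\) and \(s\), showing positivity for \(k\ge4\), with any remaining small \(k\) handled by direct exact checks. If the crude bound is too weak, I would sharpen it using the exact cosines of the node angles instead of \(|U_i|\le i+1\).

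For rigidity, suppose equality \(f(k)=B_kf_0\) holds. Then every inequality above is tight. So \(f(-2)=f(x_\pm)=0\), since the weights are positive. Because \(f\le0\) on \(I_k\) and \(-2\) is interior, \(-2\) must be a double root. Moreover \(f_i=0\) for every \(i\ge5\) with \(L(F_i)>0\); the tail argument gives strict positivity, so \(\deg f\le4\). A degree-\(\le4\) polynomial with roots \(-2,-2,x_+,x_-\) is a scalar multiple of \(f_*\), and \(f_0>0\) forces a positive scalar. Conversely, \(f_*\) is admissible and attains equality: its sign is \(\le0\) on \(I_k\) because \((x+2)^2\ge0\) and \(x^2+2x-(2k-3)\le0\) there, and \(f_{*,0}>0\) can be read off from its \(F\)-expansion. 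The final consequence is the displayed chain \(n f_0\le f(k)\) applied to \(f_*\), with strictness coming from Proposition~\ref{prop:moment-bound}.
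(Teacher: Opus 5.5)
Your proposal is correct and is essentially the paper's proof: it uses the same three-point dual measure at \(-2\) and \(-1\pm\sqrt{2k-2}\) with mass \(B_k-1\), the same Chebyshev estimate on \([-2\sqrt{k-1},2\sqrt{k-1}]\) for the high-index slacks, and the same rigidity and trace-strictness argument. That last argument uses only girth at least five, so you should rerun it rather than cite Proposition~\ref{prop:moment-bound}, which is stated under diameter three. One correction to the tail step: the inequality \(i+1+\frac{i-1}{k-1}\le i+2\) fails for \(i>k\), and with the correct factor the crude tail bound falls below \(1\) for all \(k\ge4\) only when \(i\ge10\) (for \(i=5,6\) it exceeds \(1\) for every \(k\)). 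This is why the paper evaluates the slacks for \(5\le i\le9\) exactly, as polynomials in \(k\).
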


\begin{proof}
The primal expansion is
\[
\begin{aligned}
6(k+2)f_*(x)={}&6(k+2)F_0(x)+2(2k+7)F_1(x)\\
&+(k+13)F_2(x)+6F_3(x)+F_4(x).
\end{aligned}
\]
On \(I_k\), the factor \((x+1)^2-(2k-2)\) is nonpositive, while
\((x+2)^2\ge0\); hence \(f_*\) is admissible and \(f_*(k)=B_k\).

For the dual certificate, put \(\Delta=\sqrt{2k-2}\),
\(\xi_\pm=-1\pm\Delta\), and \(\xi_0=-2\).  Define
\begin{align*}
w_-&=\frac{k(k+2)(2k^2-6-3(k-1)\Delta)}{24(2k-3)},\\
w_0&=\frac{k(k-1)(k^2+3)}{6(2k-3)},\\
w_+&=\frac{k(k+2)(2k^2-6+3(k-1)\Delta)}{24(2k-3)}.
\end{align*}
\(w_-,w_0,w_+>0\).  For the only nontrivial inequality, this follows from
\[
 (2k^2-6)^2-18(k-1)^3
 =2(k-3)(2k-3)(k^2+3)>0.
\]
The measure
\(\mu=w_-\delta_{\xi_-}+w_0\delta_{\xi_0}+w_+\delta_{\xi_+}\) satisfies
\[
 \mu(1)=B_k-1,
 \qquad \mu(F_i)=-F_i(k)\quad(1\le i\le4).
\]
For \(i\ge5\), the slack \(a_i=\mu(F_i)+F_i(k)\) is strictly positive.  For
\(5\le i\le9\), exact calculation gives, after removing the common positive
factor \(k(k-1)(k+2)(k^2+3)/6\), respectively,
\[
 2,\quad 5k-13,\quad 2(3k^2-17k+25),
\]
\[
 6k^3-47k^2+139k-150,
 \quad
 2(3k^4-27k^3+106k^2-219k+194),
\]
all positive for \(k\ge4\); for the nontrivial residual factors this follows
after writing \(k=m+4\), when all coefficients are nonnegative and the
constant terms are positive.  For \(i\ge10\), put \(r=k-1\ge3\).
The support lies in \([-2\sqrt r,2\sqrt r]\), because
\(1+\sqrt{2r}\le2\sqrt r\).  For \(|z|\le1\), the recurrence gives
\[
 F_i(2\sqrt r\,z)
 =r^{i/2}U_i(z)-r^{(i-2)/2}U_{i-2}(z),
\]
where \(U_j\) is the Chebyshev polynomial of the second kind.  Using
\(|U_j(z)|\le j+1\) yields
\[
 \frac{|\mu(F_i)|}{F_i(k)}
 \le \frac{2i+1}{3}\,3^{3-i/2}.
\]
At \(i=10\) the right-hand side is \(7/9\), and it decreases thereafter
because \(3(2i+1)^2-(2i+3)^2=8i^2-6>0\).  Hence
\[
 \frac{|\mu(F_i)|}{F_i(k)}<1\qquad(i\ge10).
\]
Expanding in the nonbacktracking basis gives
\[
 \int f\,d\mu
 =B_kf_0-f(k)+\sum_{i\ge5}f_i a_i
 \ge B_kf_0-f(k).
\]
Since \(f\le0\) on \(\operatorname{supp}\mu\),
\(\int f\,d\mu\le0\), which proves weak duality.
If equality holds, strict positivity of every high-degree slack forces
\(f_i=0\) for \(i\ge5\).  Equality on the positive dual support forces zeros
at \(\xi_-,-2,\xi_+\); the interior zero \(-2\) has even multiplicity because
\(f\le0\) on \(I_k\).  Degree at most four then forces \(f\) to be a scalar
multiple of \(f_*\), and \(f_0>0\) makes the scalar positive.  Equality in the
graph bound would then force every nonprincipal adjacency eigenvalue to be
\(-2\).  The trace equation would give \(0=k-2(n-1)\), or
\(n=(k+2)/2\), which is impossible because \(n\ge k+1\).  Hence the
open-window bound is strict.  Every symbolic identity, finite slack, tail bound, and
equality-nullspace calculation is independently checked by
\codefile{scripts/verify_proof_audit_02_two_sided_lp.py}.
\end{proof}

\begin{theorem}[Integral optimal-slack bound]\label{thm:integral-slack}
Let \(G\) be connected and \(k\)-regular with \(k\ge4\), of girth at least
five and diameter three, and suppose
\[
 \lvert\theta+1\rvert<\sqrt{2k-2}
 \qquad(\theta\ne k)
\]
for every adjacency eigenvalue \(\theta\).  Define
\[
 g_k(x)=(x+2)^2\bigl((x+1)^2-(2k-2)\bigr),\quad
 C_k=(k+2)^2(k^2+3),\quad h_k=6(k+2),
\]
\[
 \mathcal S_k=-g_k(A)+\frac{C_k}{n}J.
\]
Then
\[
 \mathcal S_k\succeq0,\qquad
 \mathcal S_k\one=0,\qquad
 \tr\mathcal S_k=h_k(B_k-n).
\]
Moreover,
\[
 \mathcal E_k=g_k(A)-(h_k+1)J+I
\]
is a nonzero, symmetric, entrywise nonnegative integral matrix with zero
diagonal and constant row sum
\[
 \varepsilon_{k,n}=C_k-(h_k+1)n+1.
\]
On \(\one^\perp\), one has \(\mathcal S_k=I-\mathcal E_k\).  In particular,
\[
 \resultbox{
 n\le
 \left\lfloor
 \frac{(k+2)^2(k^2+3)}{6(k+2)+1}
 \right\rfloor.
 }
\]
\end{theorem}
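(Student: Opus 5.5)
The plan is to read all three assertions off the spectral decomposition of \(A\), using two earlier ingredients: the primal expansion of \(f_*\) from Theorem~\ref{thm:lp-ceiling}, and the fact that \(g_k=6(k+2)f_*=h_kf_*\).

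\emph{Spectral part.} Since \(A\) is \(k\)-regular, \(\one\) is an eigenvector of \(A\) with eigenvalue \(k\), and \(\one^\perp\) is spanned by eigenvectors for the nonprincipal eigenvalues \(\theta\).

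On \(\one^\perp\), the matrix \(J\) vanishes. So \(\mathcal S_k\) acts there with eigenvalues \(-g_k(\theta)=(\theta+2)^2\bigl(2k-2-(\theta+1)^2\bigr)\). Each of these is nonnegative, by the assumed window \(\lvert\theta+1\rvert<\sqrt{2k-2}\).

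On \(\one\), note that \(g_k(k)=(k+2)^2\bigl((k+1)^2-2k+2\bigr)=(k+2)^2(k^2+3)=C_k\). Hence \(\mathcal S_k\one=-C_k\one+C_k\one=0\), and therefore \(\mathcal S_k\succeq0\).

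For the trace, girth at least five makes the diagonals of \(F_1(A),\dots,F_4(A)\) vanish, since these matrices count nonbacktracking walks and there are no closed ones of length at most four. Combined with the expansion \(g_k=h_kF_0+2(2k+7)F_1+(k+13)F_2+6F_3+F_4\) (the primal identity scaled by \(h_k\)), this gives \(\operatorname{diag}g_k(A)=h_kI\). Hence \(\tr\mathcal S_k=-nh_k+C_k=h_k(B_k-n)\), because \(C_k=h_kB_k\).

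\emph{The matrix \(\mathcal E_k\).} Several properties follow at once:
\begin{itemize}
\item \(\mathcal E_k\) is integral and symmetric, since \(g_k\) has integer coefficients.
\item Its diagonal is \(h_k-(h_k+1)+1=0\).
\item Its row sum is \(g_k(k)-(h_k+1)n+1=\varepsilon_{k,n}\).
\item On \(\one^\perp\), \(J=0\) gives \(\mathcal E_k=g_k(A)+I=I-\mathcal S_k\).
\end{itemize}
The substantive claim is entrywise nonnegativity, i.e.\ \(g_k(A)_{xy}\ge h_k+1=6k+13\) for \(x\ne y\).

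I would prove this by splitting on \(d(x,y)\in\{1,2,3\}\), which is exhaustive by the diameter hypothesis. In each case I would write \((g_k(A))_{xy}=2(2k+7)[F_1]_{xy}+(k+13)[F_2]_{xy}+6[F_3]_{xy}+[F_4]_{xy}\). The entries \([F_i]_{xy}\) count nonbacktracking walks of length \(i\), and the plan is to bound these counts from below by local counting in the radius-two balls.

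\begin{itemize}
\item \(d(x,y)=3\). Every neighbour \(c\) of \(y\) is at distance at most three from \(x\). I would count the walks \(x\to\cdots\to c\to y\) of length three or four, using that each vertex of \(N(y)\) lies at distance \(2\), \(3\) or \(4\) from \(x\).
\item \(d(x,y)=1\) and \(d(x,y)=2\). I would use the same device: enumerate how the \(k-1\) branches at each endpoint are joined. The diameter-three condition forces each such pair of branches to close up within the short range.
\end{itemize}

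The target is the inequality \(6[F_3]_{xy}+[F_4]_{xy}\ge 6k+13-2(2k+7)[F_1]_{xy}-(k+13)[F_2]_{xy}\).

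\emph{Conclusion.} Granting nonnegativity, \(\mathcal E_k\) is nonzero: its off-diagonal part is nonnegative, so if \(\mathcal E_k=0\) then \(\mathcal S_k=I\) on \(\one^\perp\). That would make every nonprincipal \(\theta\) satisfy \(g_k(\theta)=-1\), and I expect a short trace argument (using \(\tr A=0\) and \(\tr A^2=nk\)) to rule this out. A nonzero, nonnegative, integral matrix with constant row sum has row sum \(\varepsilon_{k,n}\ge1\). This gives \(C_k\ge(h_k+1)n\), which is the displayed floor bound.

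\emph{Main obstacle.} I expect the main obstacle to be the entrywise inequality \(g_k(A)_{xy}\ge6k+13\). The \(F_1\) and \(F_2\) terms alone contribute only \(4k+14\) and \(k+13\). So the deficit has to be covered by counting \(3\)- and \(4\)-step nonbacktracking walks, and this is exactly where diameter three must enter. If the direct count falls short in some case, I would rewrite \(g_k(A)\) in the distance basis using \(J=I+A+A_2+A_3\) and reorganize the comparison there.
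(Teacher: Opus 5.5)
\textbf{There is a genuine gap in the entrywise nonnegativity step.} Your spectral part, the trace computation, and the formal properties of \(\mathcal E_k\) (integrality, zero diagonal, row sum, and \(\mathcal S_k=I-\mathcal E_k\) on \(\one^\perp\)) are correct and match the paper. The problem is the inequality \(g_k(A)_{xy}\ge 6k+13\) for \(x\ne y\). You plan to prove it by counting nonbacktracking walks, using only regularity, girth at least five, and diameter three. That cannot work, because the inequality is false under those hypotheses alone.

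Take the incidence graph of a projective plane of order \(q\ge3\). It is \((q+1)\)-regular, with girth six and diameter three.
\begin{itemize}
\item For an edge \(xy\), it has \([F_2]_{xy}=[F_3]_{xy}=[F_4]_{xy}=0\), since there are no \(4\)- or \(5\)-cycles. Hence \(g_k(A)_{xy}=4k+14<6k+13\).
\item For a point and a non-incident line, \(g_k(A)_{xy}=6[F_3]_{xy}=6k<6k+13\).
\end{itemize}
In general, on an edge your target is equivalent to \(\sigma_{xy}\ge2k-1\) five-cycles through \(xy\). Diameter three does not force this; it is a consequence of the spectral window, which must enter this step. Rewriting \(g_k(A)\) in the distance basis cannot rescue it either.

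The missing idea is to extract nonnegativity from \(\mathcal S_k\succeq0\) itself. Put \(z_{uv}=g_k(A)_{uv}\in\mathbb Z\). The \(2\times2\) principal minor on \(\{u,v\}\) gives \(\lvert C_k/n-z_{uv}\rvert\le C_k/n-h_k\), hence \(z_{uv}\ge h_k\).

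Equality would give \((e_u-e_v)^{\mathsf T}\mathcal S_k(e_u-e_v)=0\), so \(e_u-e_v\in\ker\mathcal S_k\). The strict window excludes the two endpoint zeros of \(g_k\), so \(\ker\mathcal S_k=\langle\one\rangle\oplus E_{-2}(A)\). Then \(e_u-e_v\perp\one\) would be a \((-2)\)-eigenvector of \(A\). This is impossible, because the \(u\)-coordinate of \(A(e_u-e_v)\) is \(0\) or \(-1\). Integrality then yields \(z_{uv}\ge h_k+1\).

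A secondary point concerns \(\mathcal E_k\ne0\). The data \(\tr A=0\) and \(\tr A^2=nk\) do not by themselves give a short contradiction. What makes the argument short is that \(g_k(x)+1\) is irreducible over \(\mathbb Q\); after \(y=x+2\) it becomes \(y^4-2y^3+(3-2k)y^2+1\). Irreducibility forces all four roots to occur with a common multiplicity \(m\), so \(n-1=4m\). The roots sum to \(-6\), so \(\tr A=0\) gives \(k=6m\). Hence \(n-1=2k/3<k\), a contradiction.
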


\begin{proof}
The polynomial \(g_k=6(k+2)f_*\) is the optimal LP polynomial from
Theorem~\ref{thm:lp-ceiling}.  Thus \(\mathcal S_k\) vanishes on
\(\langle\one\rangle\), while on a nonprincipal \(\theta\)-eigenspace its
eigenvalue is
\[
 \bigl(2k-2-(\theta+1)^2\bigr)(\theta+2)^2\ge0.
\]
The nonbacktracking expansion of \(g_k\) has constant coefficient \(h_k\).
Girth at least five gives
\((F_i(A))_{uu}=0\) for every \(u\) and \(1\le i\le4\), and hence
\[
 \tr\mathcal S_k=C_k-h_kn=h_k(B_k-n),
 \qquad (g_k(A))_{uu}=h_k.
\]

Put \(z_{uv}=(g_k(A))_{uv}\in\mathbb Z\) for \(u\ne v\), and set
\(a=C_k/n-h_k=(\mathcal S_k)_{uu}\).  The \(2\times2\) principal minor on
\(\{u,v\}\) gives
\[
 \left|\frac{C_k}{n}-z_{uv}\right|\le a.
\]
Equality on the upper side would give
\((e_u-e_v)^{\mathsf T}\mathcal S_k(e_u-e_v)=0\), hence
\(e_u-e_v\in\ker\mathcal S_k\).  The strict spectral window gives
\[
 \ker\mathcal S_k=\langle\one\rangle\oplus E_{-2}(A).
\]
Since \(e_u-e_v\perp\one\), it would be a \((-2)\)-eigenvector of \(A\).
Yet the \(u\)-coordinate of \(A(e_u-e_v)\) is \(-1\) if \(u\sim v\) and
\(0\) otherwise, never \(-2\).  Therefore
\[
 z_{uv}\ge h_k+1.
\]
It follows directly that \(\mathcal E_k\) has the stated entrywise properties,
and \(g_k(A)\one=C_k\one\) gives its row sum.  The identity on
\(\one^\perp\) follows by eliminating \(J\).

It remains to prove \(\mathcal E_k\ne0\).  Otherwise
\[
 g_k(A)=(h_k+1)J-I,
\]
so \(p_k(A)=0\) on \(\one^\perp\), where \(p_k(x)=g_k(x)+1\).  After
\(y=x+2\),
\[
 p_k(y-2)=y^4-2y^3+(3-2k)y^2+1.
\]
This polynomial is irreducible over \(\mathbb Q\).  Its only possible rational
roots are \(\pm1\), whose values are \(3-2k\) and \(7-2k\).  A factorization
into monic integer quadratics would have constant terms both \(1\) or both
\(-1\); these alternatives force the cubic and linear coefficients to be,
respectively, equal or opposite, whereas they are \(-2\) and \(0\).

Rational canonical form now gives
\[
 \chi_{A|_{\one^\perp}}(x)=p_k(x)^m,\qquad n-1=4m.
\]
The four roots of \(p_k\) sum to \(-6\), so \(\tr A=0\) gives
\[
 0=k-6m.
\]
Hence \(n-1=4m=2k/3\), contradicting \(n\ge k+1\).  Thus
\(\mathcal E_k\ne0\).  Its constant row sum is consequently a positive
integer, so \(\varepsilon_{k,n}\ge1\), which is equivalent to the displayed
order bound.  The symbolic expansion, irreducibility alternatives, and finite
specializations have also been checked by independent exact audits.
\end{proof}

\section{Optimal-slack integrality and local positivity}
\label{sec:degree-six}

The trace of \(\mathcal S_k\) is a positive scalar multiple of the
one-variable LP defect.  Its
\(2\times2\) principal minors give integral local restrictions, and larger
principal minors form a canonical semidefinite hierarchy.  We retain the
edge-local argument below because it exposes the cycle-count obstruction
hidden by the stronger global order bound.

\begin{theorem}[Three-to-one excess bound]\label{thm:three-to-one}
Under the hypotheses of Theorem~\ref{thm:integral-slack}, assume \(k\ge6\)
and write \(\varepsilon=\varepsilon_{k,n}\).  The integral parameter
\[
 r=2\varepsilon-n-2
   =2(k+2)^2(k^2+3)-(12k+27)n
\]
satisfies
\[
 r>0,\qquad n\le3r.
\]
Consequently
\[
 \resultbox{
 n\le\left\lfloor
 \frac{3(k+2)^2(k^2+3)}{18k+41}
 \right\rfloor.
 }
\]
\end{theorem}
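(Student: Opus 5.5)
The plan is to rewrite the target inequalities in terms of the single scalar
\[
 c=\frac{\varepsilon-1}{n},
\]
and then to use small principal minors of \(\mathcal S_k\) together with the integrality of \(\mathcal E_k\).

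\textbf{Step 1: reduction to a bound on \(c\).} By Theorem~\ref{thm:integral-slack}, \(\mathcal S_k\one=0\) and \(\mathcal S_k=I-\mathcal E_k\) on \(\one^\perp\). Since \(\mathcal E_k\one=\varepsilon\one\), eliminating \(J\) gives
\[
 \mathcal S_k=I-\mathcal E_k+cJ .
\]
Hence \(\mathcal S_k\) has diagonal \(1+c\) and off-diagonal entries \(c-(\mathcal E_k)_{uv}\). From \(r=2\varepsilon-n-2\) one gets \(r/n=2c-1\). So \(r>0\) is equivalent to \(c>1/2\), and \(n\le 3r\) is equivalent to \(c\ge 2/3\). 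The final displayed order bound is pure algebra from \(n\le 3r\):
\[
 n\le 6(k+2)^2(k^2+3)-3(12k+27)n
 \iff (36k+82)\,n\le 6(k+2)^2(k^2+3),
\]
which is the stated \(n(18k+41)\le 3(k+2)^2(k^2+3)\). Thus the whole theorem reduces to proving \(c\ge 2/3\).

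\textbf{Step 2: the \(2\times2\) minors bound the entries of \(\mathcal E_k\).} Consider the principal minor on \(\{u,v\}\), exactly as in the proof of Theorem~\ref{thm:integral-slack}. The test vector \(e_u+e_v\) gives \((\mathcal E_k)_{uv}\le 2c+2\). Now suppose \(e_u+e_v\in\ker\mathcal S_k=\langle\one\rangle\oplus E_{-2}(A)\). Because \((e_u+e_v)^{\mathsf T}\one=2\), its \(\one\)-component is \(\tfrac{2}{n}\one\). Then \(e_u+e_v-\tfrac2n\one\) would be a \((-2)\)-eigenvector of \(A\), and reading this at a vertex far from both \(u\) and \(v\) should give a contradiction. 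I therefore expect the strict inequality \((\mathcal E_k)_{uv}<2c+2\). This caps the entries of \(\mathcal E_k\) by a small integer whenever \(c<2/3\): they lie in \(\{0,1,2\}\), and in \(\{0,1\}\) when \(c\le 1/2\).

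\textbf{Step 3: a Gram representation and a case analysis (main obstacle).} The key structural fact is that \(\mathcal E_k\) commutes with \(A\). On a \(\theta\)-eigenspace its eigenvalue is
\[
 1-\bigl(2k-2-(\theta+1)^2\bigr)(\theta+2)^2\le 1,
\]
with equality only at \(\theta=-2\). So \((1+c)I+cJ-\mathcal E_k\) is a positive semidefinite Gram matrix with constant diagonal and off-diagonal entries in the finite set \(\{c,\,c-1,\,c-2\}\). In the case \(c\le1/2\), \(\mathcal E_k\) is the adjacency matrix of an \(\varepsilon\)-regular graph with second eigenvalue at most \(1\). That is the regime of root-system (signed line-graph) representations, and I would use the \(3\times3\) minors on two-paths \(u\sim v\sim w\) of this graph to derive a contradiction.

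For this I would first find test vectors of the form \(x=\alpha e_v+\beta(e_u+e_w)\), plus weights spread over \(E\)-neighbourhoods, whose Rayleigh quotient is negative unless \(c\) is large. If that fails, I would count \(\tr\mathcal E_k^2\), which is at least \(n\varepsilon\) by integrality and nonnegativity. I would compare this against the spectral constraints: \(\tr\mathcal E_k=0\), all eigenvalues on \(\one^\perp\) are at most \(1\), and their multiplicities are tied to those of \(A\). The goal is to force \(\varepsilon-1\ge 2n/3\).

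The hardest part is excluding the intermediate range \(1/2<c<2/3\), where entries equal to \(2\) can occur. There I expect to need the realizability of \(g_k(A)\) as a walk-count matrix of the girth-five graph \(G\), in addition to the semidefinite constraints. The hypothesis \(k\ge6\) should enter at exactly this point.
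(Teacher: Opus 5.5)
\textbf{Steps 1 and 2.} Your reduction in Step~1 is correct. Since $r/n=2c-1$, the conditions $r>0$ and $n\le3r$ are exactly $c>1/2$ and $c\ge2/3$, and the boxed order bound follows from $n\le3r$ by algebra. Step~2 contains a slip. The matrix $\mathcal S_k=I-\mathcal E_k+cJ$ has diagonal $1+c$, so the test vector $e_u+e_v$ gives $2+4c-2(\mathcal E_k)_{uv}\ge0$, i.e.\ $(\mathcal E_k)_{uv}\le1+2c=2+r/n$. This is the paper's bound; it is not $2c+2$. Only the corrected bound yields your sets $\{0,1,2\}$ and $\{0,1\}$. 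Likewise, the Gram matrix in Step~3 is $I+cJ-\mathcal E_k$, not $(1+c)I+cJ-\mathcal E_k$. Your far-vertex kernel argument for strictness is valid.

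\textbf{Step 3: the substantive gap.} Step~3 is the entire content of the theorem, and the route you sketch cannot succeed as stated. Every constraint on $E=\mathcal E_k$ alone that you propose to use is satisfied by matrices with $c<2/3$. Those constraints are nonnegative integer entries, zero diagonal, constant row sum, $I-E+cJ\succeq0$, nonprincipal eigenvalues at most $1$, and $\tr E=0$, together with every principal minor, test-vector bound, or trace inequality derived from them. Examples:
\begin{itemize}
\item a perfect matching gives $c=0$;
\item the crown graph ($K_{a,a}$ minus a perfect matching) gives $c=(a-2)/(2a)$;
\item $E=A(K_{m,m,m})$ with $m\ge3$ gives $c=(2m-1)/(3m)\in(1/2,2/3)$.
\end{itemize}
So a proof must use how $n$, $\varepsilon$ and $E$ are tied to $k$ and to $A$. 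You gesture at this (``realizability of $g_k(A)$'', ``multiplicities tied to those of $A$'') but give no mechanism.

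\textbf{What the paper uses instead.} The paper supplies the missing ingredients.
\begin{enumerate}
\item It passes to the complement $X$ with adjacency matrix $J-I-E$. This is a regular graph of degree $(n-r-4)/2$ with least eigenvalue at least $-2$. It then invokes the classification of connected regular such graphs of order above $28$ (line graphs or cocktail-party graphs), which reduces $E$ to a short list of shapes according to the number of components.
\item It uses $r=2(k+2)^2(k^2+3)-(12k+27)n$ arithmetically. This gives $4k+9\mid129-128r$, and $18k+41\mid132650+34992t$ whenever $n=3r+t$, together with the parity of $kn$. These conditions eliminate almost every shape. Your $K_{m,m,m}$ example is the three-component case $n=3r+6$, excluded this way.
\item The survivors with $r\le0$ are the perfect-matching shape ($k=44$), the crown-graph shape ($k=158$), and one two-component case ($k=62$). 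They are killed by irreducibility. On the rational $(-1)$-eigenspace of $E$, the operator $A$ is annihilated by $g_k(x)+2$, which is irreducible over $\mathbb Q$ for these $k$. That eigenspace would therefore have dimension divisible by $4$, and it does not.
\item For $0<r<n/3$, entries equal to $2$ are not excluded by $2\times2$ minors. The paper removes them with the Gram determinant of $e_u+e_v$ and $y=\sum_{w\in W}e_w$, where $W=\{w:E_{uw}+E_{vw}=2\}$ has exactly $r$ elements. This forces $n\le3r+2$, which the divisibility condition then excludes.
\end{enumerate}
None of these mechanisms appears in your plan, so it does not yet prove either $r>0$ or $n\le3r$.
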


\begin{proof}
Put \(E=\mathcal E_k\), \(C=C_k\), \(h=h_k\), and
\[
 \rho=\frac{\varepsilon-1}{n}
      =\frac{1+r/n}{2}.
\]
The optimal slack matrix has the form
\[
 \mathcal S_k=I-E+\rho J\succeq0.
\]
 We first record the divisibility identity
\[
 128(2C-r)
 =(4k+9)(64k^3+112k^2+196k+327)+(129-128r).
\]
Since \(n=(2C-r)/(12k+27)\) is integral and \(4k+9\) is odd,
\begin{equation}\label{eq:r-divisibility}
 4k+9\mid129-128r.
\end{equation}
We shall also use the following fixed-remainder calculation.  If
\(n=3r+t\), then the defining equation for \(r\) gives
\[
 18k+41\mid 2C-(12k+27)t.
\]
Since \(\gcd(18,18k+41)=1\), Euclidean division after multiplication by
\(18^4\) yields the necessary condition
\begin{equation}\label{eq:fixed-remainder}
 18k+41\mid 132650+34992t.
\end{equation}

\smallskip
\noindent\emph{Positivity of \(r\).}
Suppose \(r\le0\).  A \(2\times2\) principal minor of \(\mathcal S_k\)
gives
\[
 E_{uv}\le2+\frac rn\qquad(u\ne v).
\]
Thus \(E\) is the adjacency matrix of a simple graph when \(r<0\).  If
\(r=0\) and \(E_{uv}=2\), positivity puts \(e_u+e_v\) in
\(\ker\mathcal S_k\).  After subtracting its projection onto \(\one\), the
strict spectral window gives a \((-2)\)-eigenvector of \(A\).  Its
\(u\)-coordinate would require
\[
 A_{uv}=-2+\frac{2k+4}{n},
\]
which is impossible because \(n\ge k^2+2\) and \(A_{uv}\in\{0,1\}\).
Hence \(E\) is simple also when \(r=0\).

Let \(X\) be the graph with adjacency matrix \(J-I-E\).  It is regular of degree
\[
 d=n-1-\varepsilon=\frac{n-r-4}{2},
\]
and \(A(X)=-I-E+J\) has least eigenvalue at least \(-2\).  Every component
has at least \(d+1\) vertices, so \(X\) has at most two components.  We use
the fact that \(E\), being a polynomial in \(A\) and \(J\), commutes with
\(A\); hence every rational eigenspace of \(E\) is \(A\)-invariant.  We use
 the classification of connected regular graphs of order greater than \(28\)
 and least eigenvalue at least \(-2\): such a graph is a line graph or a
 cocktail-party graph \cite{CameronEtAl1976}; see also
 \cite[Theorems~4.1.1 and~4.1.5]{CvetkovicEtAl2004} and
 \cite[Introduction]{KoolenEtAl2025}.

Suppose first that \(X\) is connected.  The complete case would give
\(E=0\), excluded in Theorem~\ref{thm:integral-slack}.  In the
cocktail-party case, \(E\) is a perfect matching and \(r=-n\).  The identity
\[
 1296C=(6k+13)(216k^3+396k^2+654k+1175)+277
\]
forces \(k=44\) and \(n=14812\).  On the \(7406\)-dimensional
 \((-1)\)-eigenspace of \(E\), the rational operator \(A\) is annihilated by
 \(g_{44}(x)+2\).

If \(X=L(Y)\) and \(Y\) is \(q\)-regular, then \(q\ge n/4\) and
\(|V(Y)|\le8\), contradicting \(q\le |V(Y)|-1\) because \(n\ge38\).
If \(Y\) is semiregular bipartite with part sizes \(a\ge b\ge2\), then
\[
 \frac1a+\frac1b=\frac{n-r}{2n}\ge\frac12,\qquad n\le ab.
\]
The cases \(b\ge3\) have \(ab<38\).  For \(b=2\), connectedness forces
\(Y=K_{a,2}\), hence \(r=-4\).  Equation~\eqref{eq:r-divisibility} leaves
\(k=158\) and \(n=664748\).  The resulting \((-1)\)-eigenspace of \(E\)
 has dimension \(332373\).

If \(X\) has two components, write their orders as
\(d+1+a_1\) and \(d+1+a_2\).  Then
\[
 a_1+a_2=r+2,
\]
so \(r\in\{-2,-1,0\}\).  Checking the divisors in
\eqref{eq:r-divisibility}, together with \(n\ge k^2+2\) and \(kn\) even,
leaves only
\[
 (r,k,n)=(-1,62,40875).
\]
The components then have orders \(20437\) and \(20438\); one is complete
and the other cocktail-party.  Thus \(E\) has eigenvalue \(-1\) with
 multiplicity \(10219\).

 In all three exceptional cases the rational primary space is annihilated by
 \(g_k(x)+2\).  This quartic is irreducible over \(\mathbb Q\) for every
 integer \(k\ge6\), except \(k=7\).  Indeed, after \(y=x+2\) it becomes
 \[
  y^4-2y^3+(3-2k)y^2+2.
 \]
 Gauss's lemma reduces a quadratic factorization to monic integer quadratics
 whose constant terms multiply to \(2\); coefficient comparison leaves only
 \(k=4\) or \(k=7\), while the rational-root alternatives give \(k=2\) or
 \(k=4\).  The degrees \(44,62,158\) are therefore irreducible cases, but the
 corresponding dimensions \(7406,10219,332373\) are not divisible by four.
 Since an invariant rational space annihilated by an irreducible quartic is a
 vector space over the degree-four field \(\mathbb Q[x]/(g_k(x)+2)\), its
 rational dimension must be divisible by four.
 This contradiction proves \(r>0\).

 \smallskip
\noindent\emph{Simplicity above the putative boundary.}
 Assume for contradiction that \(n>3r\), and put \(x=r/n\in(0,1/3)\).
 The \(2\times2\) minors give \(E_{uv}\le2\).  Suppose \(E_{uv}=2\), and
 for \(w\notin\{u,v\}\) set \(s_w=E_{uw}+E_{vw}\).  Cauchy--Schwarz for
 \(e_u+e_v\) and \(e_w\) in the Gram matrix \(\mathcal S_k\) gives
\[
 (1+x-s_w)^2\le x(3+x),
\]
 so the nonnegative integer \(s_w\) belongs to \(\{1,2\}\).  The row sums
 show that exactly \(r\) vertices have \(s_w=2\).  Let \(W\) be their set,
 let \(y=\sum_{w\in W}e_w\), and put
\[
  e_W=\sum_{\{w,z\}\subseteq W}E_{wz}.
\]
 Then
\[
 \begin{aligned}
  (e_u+e_v)^{\mathsf T}\mathcal S_k(e_u+e_v)&=2x,\\
  (e_u+e_v)^{\mathsf T}\mathcal S_k y&=r(x-1),\\
  y^{\mathsf T}\mathcal S_k y&=r+\frac{r^2(1+x)}2-2e_W.
 \end{aligned}
\]
 Positivity of this \(2\times2\) Gram determinant yields
 \[
  2xr+r^2(3x-1)-4xe_W\ge0.
 \]
 Since \(e_W\ge0\), it follows that \(n\le3r+2\).  Hence
 \(n=3r+t\) with \(t\in\{1,2\}\).  Equation~\eqref{eq:fixed-remainder}
 gives:
 \[
 \begin{array}{c|c|c}
  t&\text{remainder}&\text{admissible }(18k+41,k)\\
  \hline
  1&167642&\text{none}\\
  2&202634&(1427,77).
 \end{array}
 \]
 The remaining case gives \((k,n,r)=(77,77831,25943)\), but \(kn\) is odd,
 contrary to the handshake lemma.  Therefore \(E\) is simple.

 Again let \(X\) be the graph with adjacency matrix \(J-I-E\).  It is
 \(d\)-regular with least eigenvalue at least
 \(-2\).  If \(X\) had at least three components, then
 \(n\ge3(d+1)\), or \(n\le3r+6\).  Thus \(n=3r+t\) for
 \(1\le t\le6\).  Equation~\eqref{eq:fixed-remainder} gives
 \[
 \begin{array}{c|rrrrrr}
  t&1&2&3&4&5&6\\ \hline
  \text{remainder}
   &167642&202634&237626&272618&307610&342602,
 \end{array}
 \]
 and leaves no integral graph: the \(t=2\) case is excluded above, while the
 sole divisor candidate for \(t=3\) does not make \(r\) integral.  Hence \(X\)
 has at most two components.

 Suppose first that \(X\) is connected.  The cocktail-party case contradicts
 \(r>0\), so \(X=L(Y)\).  If \(Y\) is \(q\)-regular on \(v\) vertices, then
 \[
  q=\frac{n-r}{4}>\frac n6,\qquad v=\frac{2n}{q}<12.
 \]
 Simplicity and \(n\ge38\) leave only
 \[
 \begin{aligned}
  (q,v;n,r)\in\{&(8,10;40,8),(9,10;45,9),\\
                 &(8,11;44,12),(10,11;55,15)\}.
 \end{aligned}
 \]
 The radius-two lower bound forces \(k\le7\), and direct substitution in the
 defining formula for \(r\) excludes all four cases.

 For a semiregular bipartite root with part sizes \(a\ge b\ge2\),
 write \(p=n/a\le q=n/b\) for its two degrees.  Then
 \[
  p+q=\frac{n-r}{2}>\frac n3,
  \qquad (b-2)n=b(r+2p),
 \]
 so \(b<6\).  Connectedness excludes \(p=1\).  The cases \(b=2\) and
 \(b=5\) are immediate; \(b=4\) leaves only
 \[
  (p,q,a,b;n,r)=(4,10,10,4;40,12),\ (4,11,11,4;44,14),
 \]
 both excluded by the radius-two bound and the formula for \(r\).
 For \(b=3\), one has \(p=2\) or \(3\), giving \(n=3r+12\) or
 \(n=3r+18\).  Equation~\eqref{eq:fixed-remainder} gives the remainders
 \(552554=2\cdot276277\) and \(762506=2\cdot381253\).  Their odd cofactors
 are prime and congruent to \(13\pmod {18}\), whereas \(18k+41\equiv5
 \pmod {18}\).  Thus \(X\) cannot be connected.

 It remains to consider two components.  For \(k=6,7,8\), the inequalities
 \(r>0\) and \(n>3r\) contain no admissible integral order.  For \(k\ge9\),
 one has \(n>150\), and both components have order greater than \(28\).
 A regular line-graph root is too small.  For a semiregular bipartite root
 of a component of order \(N\), the part sizes satisfy
 \[
  \frac1a+\frac1b=\frac{d+2}{N}
  \ge\frac{n-r}{n+r+2}>\frac{49}{100}.
 \]
 If \(b\ge3\), the five possible pairs \((a,b)\) have \(ab\le18<N\).
 If \(b=2\), connectedness forces the root \(K_{d,2}\), whose line graph has
 order \(2d\).  Thus every component is \(K_{d+1}\), a cocktail-party graph
 of order \(d+2\), or \(L(K_{d,2})\).  The first two types alone force
 \(r\le0\).  If exactly one component has order \(2d\), then
 \(n=3r+8\) or \(n=3r+10\); if both do, then \(n=2r+8\).
 The respective fixed remainders are
 \[
  412586,\qquad482570,\qquad1792898.
 \]
 The first and third have no admissible divisor of the required linear form.
 The second leaves only \(k=123\), for which \(kn\) is odd.  This final
 contradiction proves \(n\le3r\).

 Substituting the definition of \(r\) gives
 \[
  (18k+41)n\le3(k+2)^2(k^2+3),
 \]
 which is the displayed bound.  All polynomial divisions, irreducibility
 alternatives, fixed-remainder cases, line-root reductions, and Gram
 determinants in this argument have been replayed independently in exact
 arithmetic.
\end{proof}

\begin{corollary}[Rigidity at equality]\label{cor:three-to-one-equality}
Under the hypotheses and notation of Theorem~\ref{thm:three-to-one}, equality
in \(n\le3r\) can occur only for
\[
 \resultbox{(k,n,r)=(103,185220,61740).}
\]
This is an arithmetic parameter classification; it does not assert the
existence of a graph with these parameters.
\end{corollary}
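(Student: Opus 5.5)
Equality \(n=3r\) is a single linear Diophantine condition on \((k,n)\). Substituting \(n=3r\) into \(r=2C_k-(12k+27)n\) gives
\[
 (18k+41)\,n=3(k+2)^2(k^2+3),
\]
so \(18k+41\mid 3(k+2)^2(k^2+3)\). This is exactly the fixed-remainder condition \eqref{eq:fixed-remainder} with \(t=0\), namely
\[
 18k+41\mid 132650.
\]
The plan is as follows. First I will confirm that \eqref{eq:fixed-remainder} at \(t=0\) reads \(18k+41\mid132650\); since \(\gcd(3,18k+41)=1\), the factor \(3\) is harmless. The derivation in the proof of Theorem~\ref{thm:three-to-one} gives this directly: multiply \(C_k\) by \(18^4\) and reduce modulo \(18k+41\) via \(18k\equiv-41\).

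Next I will factor \(132650=2\cdot5^2\cdot7\cdot379\) and list its divisors \(D\) satisfying \(D\equiv41\pmod{18}\), i.e.\ \(D\equiv5\pmod{18}\), with \(k=(D-41)/18\ge6\). Since \(132650\) is small, there are few such divisors. For each one I will compute \(k\), then \(n=3C_k/(18k+41)\) and \(r=n/3\), and check the integrality of \(n/3\). I expect \(D=1895=5\cdot379\) to be the survivor. It gives \(k=103\), then \(n=185220\) and \(r=61740\).

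Finally I will screen every survivor against the remaining necessary conditions. These are the handshake parity condition (\(kn\) even, which holds at \(k=103\) since \(n\) is even), the bound \(n\ge k^2+2\) from the radius-two ball and diameter three, and \(n<B_k\). I will also check the divisibility \eqref{eq:r-divisibility}, which should be automatic. Any other divisor with \(D\equiv5\pmod{18}\) yielding \(k\ge6\) should fail one of these checks: either \(3\nmid n\), or the parity condition, or the lower bound \(n\ge k^2+2\).

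The main obstacle is purely bookkeeping: enumerating the \(24\) divisors of \(132650\) and the residue classes without error, and confirming the exact value of the remainder \(132650\). I would verify the \(k=103\) values by direct substitution. As the statement notes, this is a parameter classification only, so no existence claim needs proof.
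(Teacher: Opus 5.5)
Your proposal is correct and follows essentially the paper's route. Equality reduces to \(18k+41\mid C_k\), and you obtain the remainder \(132650=2\cdot66325\) from the \(t=0\) case of \eqref{eq:fixed-remainder}; since \(18k+41\) is odd, this is equivalent to the paper's \(18k+41\mid66325\). The sieve \(D\equiv5\pmod{18}\), \(D\ge149\) then leaves only \(1895\), so \(k=103\), and your extra screening step is harmless but unnecessary because no other admissible divisor exists.
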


\begin{proof}
Equality gives
\[
 (18k+41)r=(k+2)^2(k^2+3)=:C_k.
\]
Exact Euclidean division yields
\[
\begin{aligned}
18^4C_k={}&(18k+41)
 (5832k^3+10044k^2+17946k+29107)\\
&+66325,
\end{aligned}
\]
where \(66325=5^2\cdot7\cdot379\).  Hence \(18k+41\mid66325\).
Since \(k\ge6\), this divisor is at least \(149\) and is congruent to \(5\)
modulo \(18\).  Among the positive divisors of \(66325\), only
\(1895\) has these properties.  Thus \(k=103\), after which direct
substitution gives \(r=61740\) and \(n=3r=185220\).
\end{proof}

\begin{proposition}[Signed-complement bridge]\label{prop:signed-complement}
Under the hypotheses and notation of Theorem~\ref{thm:three-to-one}, assume
\(0<r<n\) and define
\[
 S=(6k+14)J-2I-g_k(A).
\]
Then \(S\) is a signed adjacency matrix:
\[
 S_{uu}=0,\qquad S_{uv}\in\{-1,0,1\}\quad(u\ne v).
\]
It has constant signed row sum \((n-r-4)/2\), and
\[
 S+2I\succeq0,\qquad E_{-2}(S)=E_{-2}(A).
\]
\end{proposition}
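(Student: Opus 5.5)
The plan is to express \(S\) through the integral matrix \(\mathcal E_k\) of Theorem~\ref{thm:integral-slack}. All the claimed properties then follow from the Gram structure of \(\mathcal S_k\), as in the proof of Theorem~\ref{thm:three-to-one}. Write \(E=\mathcal E_k\), \(h=h_k=6(k+2)\) and \(x=r/n\in(0,1)\).

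\emph{Step 1: identify \(S\).} Since \(h+1=6k+13\), the definition \(E=g_k(A)-(h+1)J+I\) gives
\[
 S=(6k+14)J-2I-g_k(A)=J-I-E .
\]
Theorem~\ref{thm:integral-slack} shows that \(E\) is integral, symmetric and entrywise nonnegative with zero diagonal. Hence \(S_{uu}=0\) and \(S_{uv}=1-E_{uv}\) for \(u\ne v\).

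\emph{Step 2: entries and row sums.} As in the proof of Theorem~\ref{thm:three-to-one}, write
\[
 \mathcal S_k=I-E+\rho J,\qquad \rho=\frac{1+x}{2}.
\]
The \(2\times2\) principal minor on \(\{u,v\}\) has diagonal entries \(1+\rho\) and off-diagonal entry \(\rho-E_{uv}\). Positivity gives \(|\rho-E_{uv}|\le1+\rho\), so \(E_{uv}\le1+2\rho=2+x<3\). Integrality then yields \(E_{uv}\in\{0,1,2\}\), and therefore \(S_{uv}\in\{-1,0,1\}\).

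For the row sums, \(E\) has constant row sum \(\varepsilon\), and \(r=2\varepsilon-n-2\) gives \(\varepsilon=(n+r+2)/2\). The signed row sum of \(S\) is therefore
\[
 n-1-\varepsilon=\frac{n-r-4}{2}.
\]

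\emph{Step 3: positivity and the \((-2)\)-eigenspace.} We have
\[
 S+2I=J+I-E=\mathcal S_k+(1-\rho)J .
\]
Because \(r<n\), we have \(\rho<1\), so \(S+2I\) is a sum of two positive-semidefinite matrices. For a sum of positive-semidefinite matrices, \((S+2I)v=0\) holds exactly when both \(\mathcal S_k v=0\) and \(Jv=0\). So the kernel of \(S+2I\) is
\[
 \ker\mathcal S_k\cap\one^\perp .
\]
The strict spectral window, already used in Theorem~\ref{thm:integral-slack}, gives \(\ker\mathcal S_k=\langle\one\rangle\oplus E_{-2}(A)\), and \(E_{-2}(A)\subseteq\one^\perp\). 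Hence \(E_{-2}(S)=\ker(S+2I)=E_{-2}(A)\).

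The only step needing care is the bound \(E_{uv}\le2\). It requires \(x<1\) strictly, which is exactly the hypothesis \(r<n\); the same hypothesis gives \(\rho<1\) in Step 3. Everything else is bookkeeping.
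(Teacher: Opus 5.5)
Your proposal is correct and follows the paper's proof: the identification \(S=J-I-E\), the \(2\times2\) minors of \(\mathcal S_k=I-E+\rho J\) giving \(E_{uv}\in\{0,1,2\}\), the row sum from \(\varepsilon=(n+r+2)/2\), and the decomposition \(S+2I=\mathcal S_k+\frac{n-r}{2n}J\) are exactly the paper's steps. The only difference is in the eigenspace identification: the paper reads off the eigenvalue \(-2-g_k(\theta)\) of \(S\) on each \(A\)-eigenspace, whereas you intersect the kernels of the two positive-semidefinite summands and reuse \(\ker\mathcal S_k=\langle\one\rangle\oplus E_{-2}(A)\). Both routes rest on the same strict-window fact.
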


\begin{proof}
Write \(x=r/n\in(0,1)\), \(\rho=(1+x)/2\), and
\(E=\mathcal E_k\).  Since
\[
 \mathcal S_k=I-E+\rho J\succeq0,
\]
its \(2\times2\) principal minors give
\(-1\le E_{uv}\le2+x\).  The entries of \(E\) are nonnegative integers, so
\(E_{uv}\in\{0,1,2\}\) for \(u\ne v\), and \(S=J-I-E\) has the asserted
entries.  The row sum follows from
\(\varepsilon=(n+r+2)/2\).  Moreover,
\[
 S+2I
 =\mathcal S_k+\frac{n-r}{2n}J\succeq0.
\]
The matrices \(A\) and \(S\) commute.  On a nonprincipal adjacency
eigenvector with eigenvalue \(\theta\), the corresponding eigenvalue of \(S\)
is \(-2-g_k(\theta)\).  The only zero of \(g_k\) in the open shifted WOW
window is \(\theta=-2\); the other two zeros are its excluded endpoints.
The principal eigenvalue of \(S\) is not \(-2\) because \(r<n\).  Hence the
two \((-2)\)-eigenspaces coincide.
\end{proof}

\begin{proposition}[Edge-local cycle bounds]\label{prop:edge-cycle}
Let \(G\) be connected and \(k\)-regular, where \(k\ge4\), of girth at least
five and diameter
three, and suppose
\[
 |\theta+1|\le\sqrt{2k-2}
 \qquad(\theta\ne k)
\]
for every adjacency eigenvalue \(\theta\).  For an edge \(uv\), let
\(\sigma_{uv}\) be the number of
\(5\)-cycles containing that edge.  Then
\[
 2k-2\le\sigma_{uv}\le
 \frac{2(k+2)^2(k^2+3)}n-10k-26.
\]
If \(n=k^2+1+c\), then also
\[
 \sigma_{uv}\ge(k-1)^2-c.
\]
\end{proposition}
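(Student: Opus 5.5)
The plan is to read the whole statement off a single matrix entry. For an edge \(uv\) I compute \(z_{uv}=(g_k(A))_{uv}\) exactly in terms of \(\sigma_{uv}\). The two upper/lower bounds then come from the \(2\times2\) principal minor of the slack matrix \(\mathcal S_k\) of Theorem~\ref{thm:integral-slack}. The bound involving \(c\) will instead be a direct combinatorial count in the radius-three layers about \(u\).

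\emph{Step 1: the entry \(z_{uv}\).} By the primal expansion in the proof of Theorem~\ref{thm:lp-ceiling},
\[
 g_k=6(k+2)f_*=6(k+2)F_0+2(2k+7)F_1+(k+13)F_2+6F_3+F_4 .
\]
I will use that \((F_i(A))_{uv}\) counts nonbacktracking walks of length \(i\) from \(u\) to \(v\).
\begin{itemize}
\item For adjacent \(u,v\): \((F_0)_{uv}=0\) and \((F_1)_{uv}=1\).
\item A nonbacktracking \(u\)--\(v\) walk of length \(2\) or \(3\), closed up by the edge \(vu\), would give a closed walk of length \(3\) or \(4\) without backtracking, which girth at least five forbids. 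So \((F_2)_{uv}=(F_3)_{uv}=0\).
\item A nonbacktracking \(u\)--\(v\) walk of length \(4\) is, for the same girth reason, a path, and together with \(uv\) it is a \(5\)-cycle through \(uv\). This correspondence is bijective, so \((F_4)_{uv}=\sigma_{uv}\).
\end{itemize}
Hence \(z_{uv}=4k+14+\sigma_{uv}\).

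\emph{Step 2: the minor bound.} With the closed window the slack matrix \(\mathcal S_k=-g_k(A)+(C_k/n)J\) is still positive semidefinite. Its eigenvalues on nonprincipal eigenspaces are \((2k-2-(\theta+1)^2)(\theta+2)^2\ge0\), and it vanishes on \(\one\). Girth at least five still gives the constant diagonal \((\mathcal S_k)_{uu}=a=C_k/n-h_k\), exactly as in Theorem~\ref{thm:integral-slack}. The \(2\times2\) principal minor on \(\{u,v\}\) then gives
\[
 \Bigl|\frac{C_k}{n}-z_{uv}\Bigr|\le \frac{C_k}{n}-h_k .
\]
The upper side reads \(z_{uv}\le 2C_k/n-6k-12\). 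Inserting Step~1 yields \(\sigma_{uv}\le 2C_k/n-10k-26\), the stated upper bound. The lower side reads \(z_{uv}\ge h_k=6k+12\), which yields \(\sigma_{uv}\ge2k-2\). No strictness is needed, since I only use the non-strict inequality and never the \(z_{uv}\ge h_k+1\) refinement.

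\emph{Step 3: the bound \((k-1)^2-c\).} By Proposition~\ref{prop:radius-two}, \(|B_2(u)|=k^2+1\), so \(|\Gamma_3(u)|=c\) (diameter three). Each \(y\in N(v)\setminus\{u\}\) lies in \(\Gamma_2(u)\). Each neighbour \(w\neq v\) of \(y\) lies in \(\Gamma_2(u)\cup\Gamma_3(u)\), since girth at least five rules out \(\Gamma_1(u)\).
\begin{itemize}
\item If \(w\in\Gamma_2(u)\), its unique parent \(x\in N(u)\) differs from \(v\), since \(x=v\) would make \(v,y,w\) a triangle. So \(u\,x\,w\,y\,v\) is a \(5\)-cycle through \(uv\).
\item Conversely, every \(5\)-cycle through \(uv\) arises this way from a unique pair \((y,w)\).
\end{itemize}
Therefore
\[
 \sigma_{uv}=\sum_{y}\bigl(k-1-|N(y)\cap\Gamma_3(u)|\bigr),
\]
where \(y\) ranges over \(N(v)\setminus\{u\}\). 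A vertex \(z\in\Gamma_3(u)\) adjacent to two such \(y,y'\) would create the \(4\)-cycle \(z\,y\,v\,y'\). So the subtracted total is at most \(c\), and \(\sigma_{uv}\ge(k-1)^2-c\).

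I expect the main point of care to be Step~1, namely the exact identification of \((F_i(A))_{uv}\) for adjacent \(u,v\) under girth five. Once that is settled, the remaining steps are direct inequalities.
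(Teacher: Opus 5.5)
Your proof is correct and follows the paper's route: the entry \((g_k(A))_{uv}=4k+14+\sigma_{uv}\) fed into the \(2\times2\) principal minor of the positive-semidefinite slack matrix on an edge. You obtain that entry from the nonbacktracking expansion, where the paper uses \((A^3)_{uv}=2k-1\) and \((A^4)_{uv}=\sigma_{uv}\), and your count of edges from \(N(v)\setminus\{u\}\) into \(\Gamma_3(u)\) is the paper's inclusion--exclusion \(|B_2(u)\cap B_2(v)|=2k+\sigma_{uv}\) in disguise, since by \(4\)-cycle-freeness those edges correspond bijectively to the vertices of \(B_2(v)\) outside \(B_2(u)\).
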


\begin{proof}
Recall \(g_k\) and \(C_k=g_k(k)\) from
Theorem~\ref{thm:integral-slack}, and put
\[
 M=-g_k(A)+\frac{C_k}{n}J.
\]
The spectral window gives \(M\succeq0\).  For an edge \(uv\),
\[
 (A^3)_{uv}=\sum_{z\sim v}(A^2)_{uz}=k+(k-1)=2k-1.
\]
Here \(z=u\) contributes \(k\), while every other neighbour of \(v\) is at
distance two from \(u\) and has a unique length-two path from \(u\).
Moreover,
\[
 (A^4)_{uv}=\sum_z(A^2)_{uz}(A^2)_{zv}=\sigma_{uv}.
\]
Indeed, the nonzero summands away from the diagonal are precisely the vertices
at distance two from both \(u\) and \(v\).  Their two unique length-two
paths, together with \(uv\), form a five-cycle, and each five-cycle through
\(uv\) yields one such vertex.  Since
\(C_k=(k+2)^2(k^2+3)\), the diagonal and edge entries of \(M\) are
\[
 a=\frac{C_k}{n}-6(k+2),
 \qquad
 b=\frac{C_k}{n}-(4k+14)-\sigma_{uv}.
\]
The principal submatrix on \(\{u,v\}\) is
\(\bigl(\begin{smallmatrix}a&b\\b&a\end{smallmatrix}\bigr)\), so
\(a\ge0\) and \(-a\le b\le a\).  The inequality \(b\le a\) gives
\(\sigma_{uv}\ge2k-2\), while \(b\ge-a\) gives the stated upper bound.

For the final bound, every radius-two ball has size \(k^2+1\).  The set
\[
 \{u,v\}\cup(N(u)\setminus\{v\})\cup(N(v)\setminus\{u\})
\]
contains \(2k\) vertices and lies in \(B_2(u)\cap B_2(v)\).  Every further
intersection vertex is at distance two from both endpoints and is therefore in
the preceding five-cycle bijection.  Hence
\[
 |B_2(u)\cap B_2(v)|=2k+\sigma_{uv}.
\]
Inclusion--exclusion and \(n=k^2+1+c\) now give
\(\sigma_{uv}\ge(k-1)^2-c\).  The complete walk classification, sign
directions, and radius-two bijection have also been checked independently and
exactly.
\end{proof}

For \(\ell\ge3\), let \(N_\ell\) denote the number of
\(\ell\)-cycles in \(G\).

\begin{corollary}[Edge--cycle divisibility sieve]
\label{cor:edge-cycle-sieve}
Under the hypotheses of Proposition~\ref{prop:edge-cycle}, put
\[
 L_{k,n}=\max\{2k-2,\;2k^2-2k+2-n\},
 \qquad
 U_{k,n}=\frac{2(k+2)^2(k^2+3)}{n}-10k-26.
\]
Necessarily
\[
 \resultbox{\lceil L_{k,n}\rceil\le\lfloor U_{k,n}\rfloor.}
\]
If both sides equal an integer \(s\), then every edge of \(G\) lies in
exactly \(s\) five-cycles and
\[
 \resultbox{5\mid \frac{skn}{2}.}
\]
\end{corollary}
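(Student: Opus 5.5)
The corollary follows from Proposition~\ref{prop:edge-cycle} by integrality of \(\sigma_{uv}\) and a double count of five-cycles. The plan has three steps.

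\emph{Step 1: the lower bound \(L_{k,n}\).} Under the hypotheses of Proposition~\ref{prop:edge-cycle}, every edge \(uv\) satisfies \(2k-2\le\sigma_{uv}\le U_{k,n}\). Writing \(n=k^2+1+c\), the last part of that proposition gives
\[
 \sigma_{uv}\ge(k-1)^2-c=k^2-2k+1-(n-k^2-1)=2k^2-2k+2-n.
\]
Hence \(\sigma_{uv}\ge L_{k,n}\).

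\emph{Step 2: the displayed inequality.} Since \(k\ge4\) and \(G\) is connected, \(E(G)\ne\emptyset\), so at least one edge exists. For such an edge, the integer \(\sigma_{uv}\) lies in \([L_{k,n},U_{k,n}]\). Because \(\sigma_{uv}\) is an integer, \(\lceil L_{k,n}\rceil\le\sigma_{uv}\le\lfloor U_{k,n}\rfloor\), which is the boxed inequality.

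\emph{Step 3: the equality case and divisibility.} Suppose \(\lceil L_{k,n}\rceil=\lfloor U_{k,n}\rfloor=s\). Then every edge has \(\sigma_{uv}=s\). In the intended reading, where \(L_{k,n}\) and \(U_{k,n}\) themselves equal \(s\), the bounds pin \(\sigma_{uv}\) exactly. Even if only the rounded values agree, integrality of \(\sigma_{uv}\) still forces \(\sigma_{uv}=s\).

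Next, double count pairs (edge, five-cycle through it). Each five-cycle has five edges, and \(G\) has \(kn/2\) edges by regularity. Therefore
\[
 5N_5=\sum_{uv\in E(G)}\sigma_{uv}=\frac{skn}{2}.
\]
This gives \(5\mid skn/2\). Here \(kn/2\) is an integer by the handshake lemma, so the divisibility statement makes sense.

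I expect no substantial obstacle. The only point needing care is Step 3: the phrase ``both sides equal an integer \(s\)'' must be read so that it pins \(\sigma_{uv}\), and integrality of \(\sigma_{uv}\) handles either reading.
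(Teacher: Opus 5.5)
Your proposal is correct and follows the paper's proof essentially step for step. You combine the two lower bounds of Proposition~\ref{prop:edge-cycle} into \(L_{k,n}\) (your rewriting \((k-1)^2-c=2k^2-2k+2-n\) is right), use integrality of \(\sigma_{uv}\) for the rounded inequality, and double count edge--five-cycle incidences to get \(5N_5=skn/2\); your extra remarks on the existence of an edge and on both readings of the equality hypothesis only make explicit details the paper leaves implicit.
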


\begin{proof}
Writing \(n=k^2+1+c\), the two lower bounds in
Proposition~\ref{prop:edge-cycle} combine to
\[
 \sigma_{uv}\ge
 \max\{2k-2,(k-1)^2-c\}=L_{k,n},
\]
while its upper bound is \(U_{k,n}\).  Since \(\sigma_{uv}\) is an integer,
the first conclusion follows.  If the two integer bounds coincide at \(s\),
then \(\sigma_{uv}=s\) for every edge.  Counting edge--five-cycle incidences
gives
\[
 5N_5=\sum_{uv\in E(G)}\sigma_{uv}
 =s|E(G)|=\frac{skn}{2},
\]
which proves the divisibility condition.
\end{proof}

\begin{theorem}\label{thm:degree-six-fifty}
Every connected \(6\)-regular strict counterexample to WOW-284 has order at
most \(50\).
\end{theorem}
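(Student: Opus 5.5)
The plan is to reduce to diameter three and then apply the three-to-one excess bound (Theorem~\ref{thm:three-to-one}) at \(k=6\). The edge-local sieve (Corollary~\ref{cor:edge-cycle-sieve}) serves as an independent elementary check.

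First I fix the diameter. Let \(G\) be a connected \(6\)-regular strict counterexample. Corollary~\ref{cor:trichotomy} gives three cases.
\begin{itemize}
\item Diameter two would make \(G\) a degree-six Moore graph. Then \(4k-3=21\) is not a square, so the multiplicity formulas in the proof of Theorem~\ref{thm:moore-threshold} are irrational, which is impossible.
\item Diameter four is excluded by Theorem~\ref{thm:diameter-four}, since \(6\le9\).
\item Diameter at least five is excluded by Theorem~\ref{thm:endpoint-diameter}.
\end{itemize}
Hence \(\diam(G)=3\). Theorem~\ref{thm:diameter-three-score} then turns strictness into the open window \(|\theta+1|<\sqrt{10}\) for every nonprincipal adjacency eigenvalue. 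These are exactly the hypotheses of Theorem~\ref{thm:integral-slack}, with \(k=6\ge4\).

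Next I apply Theorem~\ref{thm:three-to-one}, which requires \(k\ge6\) and so applies at \(k=6\). With \((k+2)^2=64\), \(k^2+3=39\) and \(18k+41=149\) it gives
\[
 n\le\left\lfloor\frac{3\cdot64\cdot39}{149}\right\rfloor
 =\left\lfloor\frac{7488}{149}\right\rfloor=50,
\]
because \(149\cdot50=7450\le7488<7599=149\cdot51\). This proves the statement. By comparison, the bound of Theorem~\ref{thm:integral-slack} alone only gives \(n\le\lfloor2496/37\rfloor=67\), so the three-to-one quantization is what is needed.

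As an independent elementary check, I would also apply Corollary~\ref{cor:edge-cycle-sieve}, whose hypotheses hold a fortiori. The LP ceiling (Theorem~\ref{thm:lp-ceiling}) gives \(n<B_6=52\), so it suffices to exclude \(n=51\). At \(k=6\),
\[
 L_{6,n}=\max\{10,\,62-n\},\qquad U_{6,n}=\frac{4992}{n}-86 .
\]
For \(n=51\) this gives \(L=13\) and \(U\approx11.88\), so \(\lceil L\rceil>\lfloor U\rfloor\), a contradiction. This matches the local order-\(51\) obstruction described in the introduction.

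The only real obstacle is making sure the degree-six case genuinely meets every hypothesis of Theorem~\ref{thm:three-to-one}, namely diameter exactly three and the strict open window. That is handled by the diameter reduction above. The final arithmetic is routine.
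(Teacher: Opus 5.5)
Your main argument is correct and is the paper's route: reduce to diameter three, then apply Theorem~\ref{thm:three-to-one} at \(k=6\) to get \(\lfloor 7488/149\rfloor=50\). The paper excludes the other diameters with its own weight-\((3,1,-3,-1)\) Rayleigh quotient and a trace argument; your citation of Theorems~\ref{thm:endpoint-diameter} and~\ref{thm:diameter-four} together with the irrational Moore multiplicities is an equally valid substitute.

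Two of your side remarks are wrong, although neither affects the main proof.
\begin{enumerate}
\item The denominator in Theorem~\ref{thm:integral-slack} is \(6(k+2)+1=49\), not \(37\). So that theorem alone already gives \(\lfloor 2496/49\rfloor=50\) at \(k=6\), and the three-to-one quantization is not actually needed in degree six.
\item In the sieve, \(L_{6,51}=\max\{10,\,62-51\}=11\), not \(13\). The sieve therefore gives \(\lceil L\rceil=\lfloor U\rfloor=11\) rather than \(\lceil L\rceil>\lfloor U\rfloor\). The order-\(51\) contradiction comes only from the divisibility clause, \(5\nmid 11\cdot 6\cdot 51/2=1683\), exactly as in the paper.
\end{enumerate}
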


\begin{proof}
A separate Rayleigh and trace argument shows that any degree-six strict
counterexample has diameter three.  For vertices at distance \(d\ge4\), the
vector with weights \(3,1,-3,-1\) on the two endpoints and their respective
neighbourhoods has Rayleigh quotient at most
\[
 \frac{204-81d}{15}\le-8.
\]
Thus \(\lambda_{\min}(D(G))\le-8\), so
\(\Phi(G)\le6-8=-2\), contradicting strictness.  Diameter two would force equality in the
Moore bound, hence order \(37\) and adjacency characteristic polynomial
\((x-6)(x^2+x-5)^{18}\); its root sum is \(-12\), contradicting
\(\tr A=0\).  Diameter one is excluded by the girth hypothesis.  Thus the
diameter is three, and
 Theorem~\ref{thm:three-to-one} gives
\[
 n\le\left\lfloor\frac{3\cdot8^2\cdot39}{149}\right\rfloor=50.
\]

For an independent local explanation of the excluded boundary, assume
\(n=51\).  Corollary~\ref{cor:edge-cycle-sieve} has
\(\lceil L_{6,51}\rceil=\lfloor U_{6,51}\rfloor=11\), and hence requires
\[
 5N_5=153\cdot11=1683,
 \qquad 5\nmid 1683,
\]
which is impossible.  An independent exact audit verifies the full diameter
reduction and incidence calculation.
\end{proof}

\begin{corollary}[Low-degree order windows]\label{cor:low-degree-windows}
There is no regular strict counterexample of degree at most five.  In degrees
six through nine, every regular strict counterexample satisfies
\begin{align*}
k=6&:\quad n\le50,\\
k=7&:\quad n=50\text{ in diameter two, or }n\le74\text{ in diameter three},\\
k=8&:\quad n\le108,\\
k=9&:\quad n\le150.
\end{align*}
\end{corollary}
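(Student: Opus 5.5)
The plan is to assemble the corollary from results already proved, splitting by degree and then by diameter.

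\emph{Degree at most five.} This is exactly Theorem~\ref{thm:regular-degree-six}, so nothing new is needed.

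\emph{Reduction to diameters two and three.} For \(6\le k\le9\), Corollary~\ref{cor:trichotomy} leaves three possibilities for a regular strict counterexample:
\begin{itemize}
\item diameter two, where it is a Moore graph;
\item diameter three, with the shifted spectral window \(|\theta+1|<\sqrt{2k-2}\);
\item diameter four, which needs \(k\ge10\) by Theorem~\ref{thm:diameter-four} and is therefore excluded here.
\end{itemize}

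\emph{Diameter two.} By Hoffman--Singleton, a Moore graph of diameter two exists only for \(k\in\{2,3,7,57\}\). One can also see this internally: the multiplicity formulas in the proof of Theorem~\ref{thm:moore-threshold} require \(\sqrt{4k-3}\) to be rational, or else \(k=2\). Among \(k=6,8,9\), the values \(4k-3=21,29,33\) are not squares, so these degrees admit no Moore graph. For \(k=7\) a Moore graph exists, and its order is \(k^2+1=50\).

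\emph{Diameter three.} The hypotheses of Theorem~\ref{thm:three-to-one} then hold, since \(k\ge6\ge4\). Its bound \(n\le\lfloor 3(k+2)^2(k^2+3)/(18k+41)\rfloor\) evaluates as follows:
\begin{itemize}
\item \(k=6\): \(3\cdot64\cdot39/149=7488/149\), floor \(50\). This is also Theorem~\ref{thm:degree-six-fifty}.
\item \(k=7\): \(3\cdot81\cdot52/167=12636/167\approx75.7\), floor \(75\).
\item \(k=8\): \(3\cdot100\cdot67/185=20100/185\approx108.6\), floor \(108\).
\item \(k=9\): \(3\cdot121\cdot84/203=30492/203\approx150.2\), floor \(150\).
\end{itemize}

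\emph{Main obstacle: \(k=7\).} The floor gives \(75\), one more than the claimed \(74\), so \(n=75\) must be excluded separately. The handshake lemma does this: \(kn=7\cdot75=525\) is odd, which is impossible for a \(7\)-regular graph. Hence \(n\le74\).

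For \(k=9\) the same parity argument is not needed, since \(n=150\) is even. I expect the proof to be routine: the only care needed is the floor arithmetic and this parity sharpening at \(k=7\).
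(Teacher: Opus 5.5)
Your proposal is correct and follows essentially the same route as the paper. You use Theorem~\ref{thm:regular-degree-six} for degrees at most five, remove diameters four and higher (packaged via Corollary~\ref{cor:trichotomy} rather than citing Theorems~\ref{thm:endpoint-diameter} and~\ref{thm:diameter-four} directly), apply the Moore multiplicity condition in diameter two, evaluate the floors \(50,75,108,150\) from Theorem~\ref{thm:three-to-one}, and sharpen \(75\) to \(74\) at \(k=7\) by handshake parity.
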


\begin{proof}
The degree exclusion is Theorem~\ref{thm:regular-degree-six},
Theorem~\ref{thm:endpoint-diameter} excludes diameter at least five, and
Theorem~\ref{thm:diameter-four} removes diameter four for \(k\le9\).
 For diameter three, Theorem~\ref{thm:three-to-one} gives, for
\(k=6,7,8,9\), respectively,
\[
 n\le50,\ 75,\ 108,\ 150.
\]
Since \(7n=2|E(G)|\), the degree-seven bound improves to \(74\).  The
diameter-two alternatives are determined by the Moore multiplicities: only
the degree-seven, order-\(50\) Hoffman--Singleton case survives.
\end{proof}

For comparison, the unadjusted diameter-three bounds in degrees \(6\) through
\(20\) are
\[
\begin{array}{c|rrrrrrrrrrrrrrr}
 k&6&7&8&9&10&11&12&13&14&15&16&17&18&19&20\\
 \hline
 n&50&75&108&150&201&263&336&422&521&635&765&911&1075&1257&1459.
\end{array}
\]
When \(k\) is odd, \(kn=2|E(G)|\) requires \(n\) to be even; in particular,
the entries for \(k=7,11,15,17,19\) improve to
\(74,262,634,910,1256\), respectively.

\begin{proposition}[The order-\(50\) \((-2)\)-multiplicity bound]
\label{prop:order50-minus-two}
Let \(G\) be \(6\)-regular, of order \(50\), and of girth at least five.
The multiplicity \(m_{-2}(A)\) of the adjacency eigenvalue \(-2\) satisfies
\[
 m_{-2}(A)\le20.
\]
\end{proposition}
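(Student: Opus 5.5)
The plan is a moment argument on the adjacency spectrum. It uses only regularity, girth, and the order; the spectral window plays no role. Let \(m=m_{-2}(A)\). Write the remaining \(49-m\) nonprincipal eigenvalues as \(\theta_i\), and put \(y_i=\theta_i+2\), so every \(y_i\ne0\). Girth at least five gives
\[
 \tr A=\tr A^3=0,\qquad \tr A^2=300,\qquad \tr A^4=nk(2k-1)=3300.
\]
Removing the principal eigenvalue \(6\), the power sums over all \(49\) nonprincipal eigenvalues are \(6\), \(264\), \(-216\) and \(2004\) in degrees one to four, up to the sign of the first (it is \(-6\)). Since the eigenvalue \(-2\) contributes \(0\) to every power of \(\theta+2\), the shifted moments over the \(49-m\) surviving eigenvalues do not depend on \(m\):
\[
 \sum y_i=92,\quad \sum y_i^2=436,\quad \sum y_i^3=1688,\quad \sum y_i^4=7204.
\]
The problem therefore becomes a lower bound on the number \(N=49-m\) of nonzero reals \(y_i\) with these four moments.

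The first attempt is Cauchy--Schwarz, \(N\ge(\sum y_i^2)^2/\sum y_i^4=190096/7204\approx26.4\). This gives only \(m\le22\), so the third moment and a support constraint must also be used. Every \(y_i\) lies in \([-4,8]\), since \(|\theta|\le6\); I would first sharpen the right endpoint using \(\theta_2<6\) by connectedness, or by interlacing on a small substructure. I then look for a polynomial
\[
 q(y)=\alpha y+\beta y^2+\gamma y^3+\delta y^4
\]
with \(q\le1\) on the support interval; the vanishing constant term lets the \(m\) eigenvalues at \(-2\) be ignored. Then
\[
 N\ge\sum q(y_i)=92\alpha+436\beta+1688\gamma+7204\delta,
\]
and maximizing the right-hand side is a small linear program. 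The natural candidate is \(q(y)=1-c\,(y-y_1)^2(y-y_2)^2\), with double touching points inside the interval and the constant coefficient forced to vanish. I would solve this LP exactly, much as Theorem~\ref{thm:lp-ceiling} was solved, aiming for \(N>28\) and hence \(N\ge29\), which is \(m\le20\).

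If the pure moment LP stops short, at \(m\le21\) or \(m\le22\), I would add two further inputs. The first is integrality from a fifth moment: \(\tr A^5=10N_5\ge0\) is an extra one-sided constraint, and a degree-five multiplier could sharpen the bound. The second is the rationality argument used in Theorem~\ref{thm:regular-degree-six}. At the boundary values of \(m\), the moment constraints force the surviving eigenvalues to cluster at the touching points of the optimal \(q\). If those points are irrational conjugates, their multiplicities must agree, and combined with the exact moment equations this should give a contradiction by parity or divisibility.

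The main obstacle is this last step. The pure trace LP may genuinely stop at \(21\) or \(22\), and ruling out the residual values of \(m\) could require the rational-eigenspace or 5-cycle-count refinement rather than a clean polynomial certificate.
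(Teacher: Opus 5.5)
Your reduction is correct, and the candidate family you name already finishes the proof. None of the fallback inputs is needed: no support interval (so no connectedness, which is not a hypothesis here), no \(\tr A^5\), no rationality of touching points. Your worry that the pure moment LP stops at \(21\) or \(22\) is unfounded.

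The missing step is the computation itself. Take \(q=1-s^2\) with \(s(y)=1+\tfrac{61y-29y^2}{243}\). Then \(s(0)=1\), so \(q(0)=0\), and \(q\le1\) on all of \(\mathbb R\). Your shifted moments \(92,436,1688,7204\) give \(\sum_i q(y_i)=7032/243\). Hence \(49-m\ge7032/243>28\), i.e.\ \(m\le1625/81<21\).

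This \(q\) is optimal among all degree-four \(q\le1\) on \(\mathbb R\) with \(q(0)=0\). To see this, write \(1-q\) as a sum of squares of quadratics whose constant terms have squares summing to \(1\). The value is then at most \(v^{\mathsf T}M^{-1}v\), with \(v=(92,436)\) and \(M=\bigl(\begin{smallmatrix}436&1688\\1688&7204\end{smallmatrix}\bigr)\), and this equals \(7032/243\). Your Cauchy--Schwarz step is the subcase with no linear term in \(s\). That is why it ignores the third moment and only reaches \(26.4\).

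The paper runs the same argument in primal form. The \(3\times3\) Hankel matrix of the moments of the \(49-m\) remaining eigenvalues is positive semidefinite, and \(\det H=3600(1625-81m)\ge0\). Hankel determinants are invariant under the unipotent shift \(y=\theta+2\), so in your variables this determinant is \(291600N-8438400\). The resulting bound \(N\ge 8438400/291600=7032/243\) is exactly the Schur-complement form of your LP value.

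So your route is the LP dual of the paper's. The determinant is a one-line check, while your version produces an explicit polynomial certificate. The only real shortfall of the proposal is that it stops before doing this computation.
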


\begin{proof}
Put \(m=m_{-2}(A)\), remove the principal eigenvalue \(6\) and the \(m\)
copies of \(-2\), and denote the remaining spectral moments by
\(\mu_0,\ldots,\mu_4\).  Girth at least five gives
\[
 \tr A=0,\quad \tr A^2=300,\quad \tr A^3=0,\quad \tr A^4=3300,
\]
and therefore
\[
 (\mu_0,\mu_1,\mu_2,\mu_3,\mu_4)
 =(49-m,-6+2m,264-4m,-216+8m,2004-16m).
\]
The moment matrix
\[
 H=\begin{pmatrix}
 \mu_0&\mu_1&\mu_2\\
 \mu_1&\mu_2&\mu_3\\
 \mu_2&\mu_3&\mu_4
 \end{pmatrix}
\]
is positive semidefinite.  Exact expansion gives
\[
 \det H=3600(1625-81m)\ge0.
\]
Since \(m\) is an integer, \(m\le20\).
\end{proof}

\subsection{Necessary structure at order fifty}

The remaining degree-six boundary is highly constrained, although not yet
eliminated.

\begin{theorem}\label{thm:order50-feasibility}
Let \(G\) be a connected \(6\)-regular graph of order \(50\) and girth
at least five, and suppose \(\Phi(G)>0\).  Then \(G\) has diameter three.
For an edge \(e\), let \(\sigma_e\) be the number of five-cycles containing
\(e\).  Then \(\sigma_e\in\{12,13\}\).  Call \(e\) low or high according as
\(\sigma_e=12\) or \(13\), respectively, and let \(H\) be the spanning
subgraph of high edges.  Put \(m=|E(H)|\), and let \(\tau(v)\) be the number
of five-cycles through \(v\).  Then
\[
 \tau(v)\in\{36,37,38\},
 \qquad d_H(v)=2\tau(v)-72\in\{0,2,4\},
\]
\[
 m\equiv0\pmod5,
 \qquad N_5=360+\frac m5.
\]
For a two-edge path \(u-v-w\), put
\[
 R_{uvw}=6\alpha_{uvw}+\beta_{uvw},
\]
where \(\alpha\) and \(\beta\) count the five- and six-cycles containing the
path.  The allowed values are
\[
\begin{array}{c|c}
\text{types of the two incident edges}&R_{uvw}\\
\hline
\text{low--low}&30,31,32\\
\text{mixed}&30,31,32\\
\text{high--high}&30,31.
\end{array}
\]
Writing \(S_2=\sum_vd_H(v)^2\), one has
\[
 1950-m\le N_6
 \le2200-\frac{5m}{6}-\frac{S_2}{12},
\]
\[
 \frac{43m^2-70200m+119632500}{58500}
 \le N_6
 \le\frac{4220000-2200m-7m^2}{2000}.
\]
Writing
\[
 n_i=\bigl|\{v\in V(G):d_H(v)=i\}\bigr|
 \qquad(i\in\{0,2,4\}),
\]
exact enumeration leaves \(266\) triples \((n_0,n_2,n_4)\) such that
\[
 n_0+n_2+n_4=50,\qquad
 m=n_2+2n_4\equiv0\pmod5,
\]
and at least one integer \(N_6\) satisfies all four displayed bounds.
\end{theorem}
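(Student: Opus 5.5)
The plan is to specialize the slack-matrix machinery of Theorem~\ref{thm:integral-slack} and Proposition~\ref{prop:edge-cycle} to \((k,n)=(6,50)\), where \(C_6=64\cdot39=2496\), and to read off every claim from low-order principal minors of
\[
 M=-g_6(A)+\frac{2496}{50}J\succeq0 .
\]
The diameter-three claim is taken verbatim from the proof of Theorem~\ref{thm:degree-six-fifty}. That proof excludes diameter at least four via the \(3,1,-3,-1\) Rayleigh vector, and excludes diameter two via the trace of the order-\(37\) Moore polynomial. It does not use \(n\le50\).

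Next come the edge values. Proposition~\ref{prop:edge-cycle} at \((6,50)\) gives \(U_{6,50}=99.84-86=13.84\). Corollary~\ref{cor:edge-cycle-sieve} gives \(L_{6,50}=\max\{10,12\}=12\). Hence \(\sigma_e\in\{12,13\}\).

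Every five-cycle through \(v\) uses exactly two edges at \(v\). Therefore
\[
 2\tau(v)=\sum_{e\ni v}\sigma_e=72+d_H(v),
\]
which gives the formula for \(d_H\). Double counting edge--cycle incidences gives \(5N_5=150\cdot12+m\), hence \(N_5=360+m/5\) and \(5\mid m\). The constraint \(d_H(v)\le4\), equivalently \(\tau(v)\le38\), is the first genuinely new inequality. I would obtain it from a star test vector \(x=\alpha e_v+\sum_{u\sim v}e_u\) in \(x^{\mathsf T}Mx\ge0\). This needs the entries of \(M\) on pairs of neighbours of \(v\), which are distance-two pairs, so the two-path analysis below comes first.

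For a two-path \(u-v-w\), the pair \(u,w\) is at distance two. I would expand \(g_6(A)_{uw}\) in the nonbacktracking basis of Theorem~\ref{thm:lp-ceiling}, namely
\[
 6(k+2)F_0+2(2k+7)F_1+(k+13)F_2+6F_3+F_4 .
\]
Girth five makes \(F_2(A)_{uw}=1\). Also \(F_3(A)_{uw}=\alpha_{uvw}\), since a nonbacktracking three-walk from \(u\) to \(w\) closes a five-cycle with the path. Similarly \(F_4(A)_{uw}=\beta_{uvw}\), since such a four-walk closes a six-cycle; here girth five rules out shortcuts. Thus
\[
 g_6(A)_{uw}=19+6\alpha+\beta=19+R_{uvw}.
\]
The \(2\times2\) minor \(|49.92-19-R|\le1.92\) then gives \(R\in\{29,\dots,32\}\). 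To raise the lower end to \(30\) and to cut \(32\) in the high--high case, I would use \(3\times3\) principal minors on \(\{u,v,w\}\). Their off-diagonal entries are \(11.92-\sigma_{uv}\), \(11.92-\sigma_{vw}\) and \(30.92-R\), and the determinant conditions for each edge-type pattern give the table. The same entries, inserted into the star quadratic form (optimizing \(\alpha\)), should yield \(d_H(v)\le4\).

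For the global counts, each five-cycle contains five two-paths and each six-cycle contains six. Hence
\[
 \sum_{u-v-w}R_{uvw}=30N_5+6N_6,
\]
summed over the \(750\) two-paths. The type composition is \(\binom{d_H(v)}2\) high--high paths, \(d_H(v)(6-d_H(v))\) mixed paths, and the rest low--low at each \(v\). Taking extreme \(R\)-values per type gives the linear bounds in \(m\) and \(S_2\).

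The quadratic bounds should come from a trace identity. I would compute \(\tr A^6\) via closed walks, which involve \(N_6\), \(N_5\) and degree terms. Combined with positive semidefiniteness of the Hankel moment matrix as in Proposition~\ref{prop:order50-minus-two}, or with \(\tr M^2\ge(\tr M)^2/\operatorname{rank}\), this bounds \(N_6\) quadratically in \(m\). Finally, a finite loop over \((n_0,n_2,n_4)\) with \(m=n_2+2n_4\) and \(S_2=4n_2+16n_4\), checking integer feasibility of \(N_6\), produces the \(266\) triples.

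I expect the main obstacle to be the local minor analysis: the high--high exclusion of \(R=32\) and especially the bound \(d_H\le4\), which requires the right test vector and careful exact rounding. The quadratic \(N_6\) bounds are the second risk, since the precise coefficients depend on choosing the correct moment certificate.
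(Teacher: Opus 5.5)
The proposal has a genuine gap at $d_H(v)\le4$, and two further steps fail as stated. The rest of your outline agrees with the paper: the diameter reduction, $\sigma_e\in\{12,13\}$, the identities $2\tau(v)=72+d_H(v)$ and $N_5=360+m/5$, the entry $g_6(A)_{uw}=19+R_{uvw}$, the linear $N_6$ bounds via $\sum R=30N_5+6N_6$, and the final enumeration.

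\textbf{Main gap: excluding $d_H(v)=6$, i.e.\ $\tau(v)=39$.} Your star vector, fed only the local intervals, cannot do this. If all six edges at $v$ are high, then $25M_{vu}=-27$ and $25M_{uu'}=773-25R_{uvu'}$ with $R_{uvu'}\in\{30,31\}$. If all fifteen values equal $30$, then $25\,x^{\mathsf T}Mx=48\alpha^2-324\alpha+978$. Its discriminant is negative, so there is no contradiction for any $\alpha$.

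You need global information about the neighbourhood of $v$. The paper takes it from the distance layers $\Gamma_0,\dots,\Gamma_3$ about $v$, of sizes $1,6,30,13$, where $e(\Gamma_2)=\tau$ and $e(\Gamma_2,\Gamma_3)=150-2\tau$. The compression of $A$ onto the normalized layer indicators has characteristic polynomial $(x-6)q_\tau(x)$. For $\tau=39$ one has $q_\tau(-1+\sqrt{10})<0<q_\tau(6)$. Interlacing then yields a nonprincipal adjacency eigenvalue above $-1+\sqrt{10}$, contradicting the open window.

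Your star test can also be completed with the same layer data. Summing over the fifteen pairs gives $\sum R_{uvu'}=6\tau(v)+\tau_6(v)$. The number of six-cycles through $v$ is $\tau_6(v)=\sum_{y\in\Gamma_2\cup\Gamma_3}\binom{d_{\Gamma_2}(y)}{2}$, where $d_{\Gamma_2}(y)=|N(y)\cap\Gamma_2|$. When $\tau(v)=39$, convexity gives $\tau_6(v)\ge66+165=231$. Since $R_{uvu'}\le31$ for high--high pairs, this forces every $R_{uvu'}=31$, and then the minimum of the star form over $\alpha$ is negative.

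\textbf{The value $R=29$.} Your $3\times3$ minors remove it only for mixed pairs. For equal edge types the block is positive semidefinite and singular at $R=29$; for low--low, $25$ times it is $\bigl(\begin{smallmatrix}48&-2&48\\-2&48&-2\\48&-2&48\end{smallmatrix}\bigr)$. The paper closes this with a kernel argument. Equality puts $e_u-e_w$ in $\ker M\cap\one^\perp$. The strict window identifies this kernel with $E_{-2}(A)$. That is impossible, because $(A(e_u-e_w))_u=0\neq-2$.

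\textbf{The quadratic upper bound on $N_6$.} Here $\mu_5=2m-4176$ and $\mu_6=12N_6-2856$; note that $\tr A^6=50\cdot876+12N_6$ has no $N_5$ term, so $m$ enters only through the fifth moment. The Hankel moment matrix gives only the quadratic lower bound. The upper bound $N_6\le(4220000-2200m-7m^2)/2000$ needs the window, through the localizing matrix $\bigl(\sum_i(10-(\theta_i+1)^2)\theta_i^{a+b}\bigr)_{a,b=0}^{2}\succeq0$ over the nonprincipal eigenvalues $\theta_i$. Your alternative via $\tr M^2$ brings in $g_6(A)^2$, hence $N_7$ and $N_8$, and does not give a bound in $m$ alone.
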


\begin{proof}
The diameter reduction used in Theorem~\ref{thm:degree-six-fifty} applies
verbatim: diameter at least four gives a Rayleigh quotient at most \(-8\), and
diameter two gives the trace contradiction from
\((x-6)(x^2+x-5)^{18}\).  Hence the diameter is three.
Proposition~\ref{prop:edge-cycle} gives
\[
 L_{6,50}=12,\qquad U_{6,50}=\frac{346}{25}<14.
\]
Since \(\sigma_e\) is integral, \(\sigma_e\in\{12,13\}\) for every edge
\(e\).  Around a fixed vertex \(v\), write
\(\Gamma_i=\Gamma_i(v)\) for the distance layers; their sizes are
\(1,6,30,13\).  If \(\tau\) is the number of
five-cycles through \(v\), the average row quotient is similar to the
symmetric compression on normalized layer indicators.  Its characteristic
polynomial is \((x-6)q_\tau(x)\), where
\[
 195q_\tau(x)
 =195x^3+(2250-43\tau)x^2+(105-30\tau)x+228\tau-11250.
\]
In particular,
\[
 195q_\tau(-1+\sqrt{10})
 =(-215+56\sqrt{10})\tau+7350-1860\sqrt{10}.
\]
The coefficient of \(\tau\) is negative because
\(56^2\cdot10<215^2\), and at \(\tau=39\) the right-hand side is
\(9(-115+36\sqrt{10})<0\).  Also
\[
 195q_\tau(6)=1500(75-\tau).
\]
Every vertex of \(\Gamma_2\) has exactly one neighbour in \(\Gamma_1\), and
the edges inside \(\Gamma_2\) are in bijection with the five-cycles through
\(v\).  Hence
\[
 e(\Gamma_2,\Gamma_3)
 =30\cdot5-2e(\Gamma_2)
 =150-2\tau>0,
\]
because every vertex of the nonempty layer \(\Gamma_3\) has a neighbour in
\(\Gamma_2\).  Thus \(q_\tau(6)>0\).  Therefore \(\tau\ge39\) would place a
nonprincipal compression eigenvalue in
\((-1+\sqrt{10},6)\), contradicting interlacing and the open WOW window.
Conversely,
\[
 150-2\tau=e(\Gamma_2,\Gamma_3)\le13\cdot6,
\]
so \(\tau\ge36\).  Hence \(\tau\in\{36,37,38\}\).  Finally,
\(\sum_{e\ni v}\sigma_e=2\tau(v)\), so the high-edge degree is
\(2\tau(v)-72\); counting edge--five-cycle incidences gives
\(5N_5=12\cdot150+m\).

For a two-path \(u-v-w\), girth at least five gives
\((A^3)_{uw}=\alpha_{uvw}\) and
\((A^4)_{uw}=16+\beta_{uvw}\).  The corresponding \(3\times3\) principal
minor of the centered positive-semidefinite matrix yields the displayed finite
sets for \(R_{uvw}\), except for an apparent equality value \(R_{uvw}=29\).
At equality, the Gram norm of \(e_u-e_w\) vanishes, so this vector lies in
the kernel of the centered matrix.  On \(\one^\perp\), that matrix is
\(-g_6(A)\).  The strict shifted window excludes the two endpoint zeros of
\(g_6\), leaving only its double zero at \(-2\); hence the kernel there is
exactly the adjacency \(-2\)-eigenspace.  However, the \(u\)-coordinate of
\(A(e_u-e_w)\) is zero because \(u\not\sim w\), whereas the
\(u\)-coordinate of \(-2(e_u-e_w)\) is \(-2\).  This excludes
\(R_{uvw}=29\).  Summing the local inequalities gives the first pair of
\(N_6\) bounds; shifted moment and localizing matrices give the second.
The complete symbolic determinants, Schur complements, kernel argument, and
exact profile enumeration are independently checked by
\codefile{scripts/verify_proof_audit_06_order50_feasibility.py}.  The
surviving \(266\) profiles are exact pruning data, not an existence or
nonexistence claim.
\end{proof}

\begin{remark}[Signed-root Gram formulation]\label{rem:signed-root}
Under the hypotheses of Theorem~\ref{thm:order50-feasibility}, put
\[
 T=50J-g_6(A)-2I.
\]
Then
\[
 T\one=2\one,\qquad T+2I\succeq0,
 \qquad 25\mathcal S_6=25(T+2I)-2J.
\]
This is the specialization of Proposition~\ref{prop:signed-complement}:
the excess parameter is \(r=42\), so the signed row sum is \(2\).
For vertices \(u,z\) at distance three, put
\[
 q_{uz}=6(A^3)_{uz}+(A^4)_{uz}.
\]
The \(2\times2\) estimate gives \(q_{uz}\in\{48,49,50,51\}\), and the kernel
argument in Theorem~\ref{thm:integral-slack} excludes \(48\).  Together with
the edge and two-path intervals in Theorem~\ref{thm:order50-feasibility}, this
gives
\[
 T_{uv}\in\{-1,0,1\}\qquad(u\ne v),\qquad T_{uu}=0.
\]
 Consequently \(T+2I\) is the Gram matrix of \(50\) vectors of norm
 \(\sqrt2\) with pairwise inner products in \(\{-1,0,1\}\) and constant signed
 row sum \(2\).  Its kernel is \(E_{-2}(A)\), so
 Proposition~\ref{prop:order50-minus-two} gives
 \[
  \operatorname{rank}(T+2I)\ge30.
 \]
Thus the unresolved degree-six boundary can be viewed as a
 root-type integral Gram problem constrained by the graph's distance classes
 and local cycle data.
\end{remark}

\begin{theorem}[Disconnected signed complement]\label{thm:order50-disconnected}
Under the hypotheses of Theorem~\ref{thm:order50-feasibility}, the edge-signed
graph with adjacency matrix \(T\) from Remark~\ref{rem:signed-root} is
disconnected.
\end{theorem}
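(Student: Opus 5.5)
The plan is to argue by contradiction, reading \(T+2I\) as a Gram matrix of roots in the sense of Remark~\ref{rem:signed-root} and then applying the classification of irreducible root lattices.

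\emph{Step 1: set up the root configuration.} Write \(T+2I=(\langle x_u,x_v\rangle)\) with \(x_u\in\mathbb R^{\rho}\) and \(\rho=\operatorname{rank}(T+2I)\ge30\). By Remark~\ref{rem:signed-root}, each \(\|x_u\|^2=2\) and \(\langle x_u,x_v\rangle\in\{-1,0,1\}\) for \(u\ne v\). Three consequences follow:
\begin{itemize}
\item The \(x_u\) are pairwise distinct and no two are antipodal, since those cases would give inner products \(\pm2\).
\item The integral lattice \(\Lambda\) they generate is an even lattice spanned by norm-two vectors, so it is a root lattice. Its irreducible summands correspond exactly to the connected components of the signed graph of \(T\), because \(\langle x_u,x_v\rangle\ne0\) iff \(T_{uv}\ne0\).
\item Put \(s=\sum_u x_u\). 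Then \(T\one=2\one\) gives \(\langle x_u,s\rangle=4\) for every \(u\), and \(\|s\|^2=\one^{\mathsf T}(T+2I)\one=200\).
\end{itemize}

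\emph{Step 2: reduce to types \(A\) and \(D\).} Assume for contradiction that the signed graph is connected. Then \(\Lambda\) is irreducible of rank \(\rho\ge30\). Since \(E_6,E_7,E_8\) have rank at most \(8\), \(\Lambda\) is \(A_\rho\) or \(D_\rho\).

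\emph{Step 3a: type \(A_\rho\).} Here every \(x_u=e_i-e_j\) in \(\mathbb Z^{\rho+1}\), so the configuration is a digraph \(\vec\Gamma\) with \(50\) arcs on at most \(\rho+1\) points. It has no parallel or antiparallel arcs, and it is connected on at least \(31\) points. Let \(c_i\) be the out-degree minus the in-degree of point \(i\), so that \(s=\sum_i c_ie_i\). The conditions then read:
\[
 c_i-c_j=4 \text{ for every arc } i\to j,
 \qquad \sum_i c_i=0,
 \qquad \sum_i c_i^2=200 .
\]
By connectivity, all potentials \(c_i\) lie in one residue class \(c+4\mathbb Z\), and every arc drops the potential by exactly \(4\). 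Points of maximal potential have only out-arcs, so their potential is positive; symmetrically, the minimal potential is negative. Each potential level \(L\) contributes \(c_L\cdot|\text{level}|\), which equals the difference between the numbers of arcs leaving and entering that level.

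I would combine three relations:
\begin{itemize}
\item the flow count between consecutive levels;
\item \(\sum_i|c_i|\le\sum_i\deg(i)=100\);
\item \(\sum_i c_i^2=200\) together with \(|\{i\}|\ge31\).
\end{itemize}
Since \(\sum c_i^2=200\) while \(\sum|c_i|\le100\), most points must carry small \(|c_i|\). But every potential is congruent to \(c\) mod \(4\), and all points on a common level share one value. I expect this to force either too few arcs or a single level pair, and the latter contradicts \(\sum c_i=0\) via \(\|s\|^2\).

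\emph{Step 3b: type \(D_\rho\).} Here \(x_u=\pm e_i\pm e_j\), which is a signed-graph (bidirected) analogue of the same setup. I would use the same potential argument with \(s\in\mathbb Z^\rho\) and \(\langle x_u,s\rangle=\pm s_i\pm s_j=4\), again counting \(\sum|s_i|\le100\) against \(\|s\|^2=200\) and \(\rho\ge30\).

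\emph{Fallback and main obstacle.} If the counting in Step 3 is not immediately decisive, I would add graph information. Since \(T\) is a polynomial in \(A\) and \(J\), \(\operatorname{Aut}\)-independent spectral data constrains \(T\): its eigenvalues are \(-2-g_6(\theta)\) with multiplicities read from \(A\), and \(m_{-2}(A)\le20\) holds by Proposition~\ref{prop:order50-minus-two}. This would pin \(\rho=50-m_{-2}\) and the spectrum of the Gram matrix of an \(A_\rho\) or \(D_\rho\) configuration, for instance \(\operatorname{tr}T^2=2\cdot\#\{T_{uv}\ne0\}\) compared with the incidence Laplacian. I expect the main obstacle to be Step 3: turning the arithmetic constraints \(\langle x_u,s\rangle=4\) and \(\|s\|^2=200\) into a clean contradiction for the \(A\) and especially the \(D\) configurations. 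Certifying the finitely many residual profiles would be done by exact enumeration, in the same way as the paper's other audits.
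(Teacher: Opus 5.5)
\textbf{There is a genuine gap in Step 3.} Steps 1 and 2 are sound. The root-lattice classification is a legitimate substitute for the signed-graph representation theorem the paper cites, and type \(A_\rho\) sits inside \(D_{\rho+1}\) anyway. The problem is that Step 3 cannot succeed with the constraints you list, because they are simultaneously satisfiable.

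Take the roots \(e_i+e_{i+1}\) (indices mod \(50\)) around a \(50\)-cycle. Equivalently, in type \(A_{49}\), orient \(C_{50}\) alternately, with \(25\) sources and \(25\) sinks. The Gram matrix is \(2I+A(C_{50})\), so \(T=A(C_{50})\). This configuration satisfies everything you use:
\begin{itemize}
\item \(T\) has entries in \(\{0,1\}\), \(T\one=2\one\), and \(T+2I\succeq0\) has rank \(49\ge30\);
\item the signed graph is connected;
\item \(\langle x_u,s\rangle=4\), \(\lVert s\rVert^2=200\), and \(\sum_i|c_i|=100\).
\end{itemize}
Its potentials are \(\pm2\) on \(25\) points each. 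This is exactly the ``single level pair'' you expected to contradict \(\sum_i c_i=0\) via \(\lVert s\rVert^2\), and it does not. So no counting with \(\sum_i|c_i|\le100\), \(\lVert s\rVert^2=200\) and \(v\ge31\) can close the case. Your fallback of spectral matching plus exact enumeration is a hope rather than an argument. The level patterns \((n_2,n_6)=(41,1),(32,2)\) in type \(D\) are not obviously a short list either.

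\textbf{The missing idea} is an invariant of \(G\) that is not encoded in the Gram data of Remark~\ref{rem:signed-root}. The paper extracts it from \(\tr T^2\). It expands \((g_6+2)^2\) in the nonbacktracking basis and evaluates the traces by cycle counts:
\[
\tr F_5=10N_5,\quad \tr F_6=12N_6,\quad \tr F_7=14N_7+40N_5,\quad \tr F_8=16N_8+48N_6 .
\]
This gives \(\tr T^2=8(500N_5+123N_6+21N_7+2N_8-604100)\equiv0\pmod8\). On the other hand, \(\tr T^2=2(P_++P_-)=100+4P_-\) and \(P_+-P_-=50\). Hence the number \(P_-\) of negative signed edges is odd.

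With this parity in hand, the \(D\)-type analysis, normalized so that \(s_i\ge0\), splits by the residue of the levels mod \(4\):
\begin{itemize}
\item Levels \(\equiv0\pmod4\) are impossible because \(\lVert s\rVert^2\) would be divisible by \(16\), while \(200\equiv8\pmod{16}\).
\item Odd levels are impossible by a flow-balance identity giving \(200-3v\equiv0\pmod{32}\), which is incompatible with \(30\le v\le51\).
\item The surviving family has levels \(2\) and \(6\); it contains your cycle example as \((n_2,n_6)=(50,0)\). Here \(P_-=\sum_{s_i=2}p_im_i\equiv\sum_{s_i=2}m_i=6n_6\), which is even, contradicting the parity of \(P_-\).
\end{itemize}
Your arithmetic instinct does dispose of the first two residue classes. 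The cycle configuration, however, has \(P_-=0\), so it is excluded precisely by the \(\tr T^2\) parity and by nothing in your list.
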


\begin{proof}
Let \(N_i\) denote the number of \(i\)-cycles in \(G\).  In terms of the degree-six
nonbacktracking polynomials,
\[
\begin{aligned}
(g_6+2)^2={}&28144F_0+18220F_1+8838F_2+3576F_3+1233F_4\\
&+352F_5+78F_6+12F_7+F_8.
\end{aligned}
\]
Girth at least five gives \(\tr F_i(A)=0\) for \(1\le i\le4\).  For
lengths five and six, every closed nonbacktracking walk is a directed cycle.
At lengths seven and eight there are also the walks obtained by attaching a
one-edge tail to a directed five- or six-cycle.  Hence
\[
\begin{aligned}
\tr F_5(A)&=10N_5,&
\tr F_6(A)&=12N_6,\\
\tr F_7(A)&=14N_7+40N_5,&
\tr F_8(A)&=16N_8+48N_6.
\end{aligned}
\]
Removing the principal adjacency contribution and restoring the principal
signed eigenvalue \(2\) now gives
\[
\tr T^2
=8(500N_5+123N_6+21N_7+2N_8-604100).
\]
If \(P_+\) and \(P_-\) are the numbers of positive and negative signed edges,
then the signed row sum and the zero diagonal give
\[
P_+-P_-=50,\qquad
\tr T^2=2(P_++P_-)=100+4P_-.
\]
It follows that \(P_-\) is odd.

Suppose that \(T\) were connected.  By
Proposition~\ref{prop:order50-minus-two},
\(\operatorname{rank}(T+2I)\ge30\).  The root-system representation theorem
for connected edge-signed graphs with smallest eigenvalue at least \(-2\)
\cite[Theorem~2]{GreavesEtAl2015} shows that \(T+2I\) admits a representation
in a root system of type \(D_\ell\) or \(E_8\).  The rank bound excludes
\(E_8\).  Thus
\[
B^{\mathsf T}B=T+2I
\]
for an integral matrix \(B\) whose columns \(b_u\) are roots
\(\pm e_i\pm e_j\).  Put \(s=B\one\).  Since
\((T+2I)\one=4\one\),
\[
b_u\mathbin{\cdot}s=4\quad\text{for every }u,
\qquad
\lVert s\rVert^2=200.
\]
Changing signs of coordinate axes if necessary, assume \(s_i\ge0\).
The coordinate-support multigraph is connected, since otherwise the columns of \(B\)
would split into two Gram-orthogonal families and \(T\) would be disconnected.
If \(v\) coordinates are used, then
\[
30\le v\le51:
\]
the lower bound is the Gram rank, and a connected support multigraph with
fifty root-edges has at most fifty-one vertices.

For every root, two coordinate levels satisfy
\(\pm s_i\pm s_j=4\).  Connectivity therefore places all levels in one of
\[
0,4,8,\ldots;\qquad
2,6,10,\ldots;\qquad
1,3,5,\ldots.
\]
The first family would make \(\lVert s\rVert^2\) divisible by \(16\), contrary
to \(200\equiv8\pmod {16}\).

In the second family, a level at least \(10\), together with the other
\(v-1\ge29\) levels, would contribute at least
\(10^2+29\cdot2^2>200\).  Thus only levels \(2\) and \(6\) occur.  If their
multiplicities are \(n_2,n_6\), then
\[
n_2+9n_6=50,\qquad
v=n_2+n_6=50-8n_6\ge30,
\]
so
\[
(n_2,n_6)\in\{(50,0),(41,1),(32,2)\}.
\]
Every root is now either \(e_i+e_j\) with \(s_i=s_j=2\), or
\(e_i-e_j\) with \(s_i=6\) and \(s_j=2\), after ordering the two coordinates.
Its sign pattern is unique on a fixed support, while duplicate columns would
have inner product \(2\); hence distinct roots share at most one coordinate
in this restricted family.  At a level-two coordinate let \(p_i,m_i\) be its
positive and negative incidence counts.  Then \(p_i-m_i=2\), and every
level-six coordinate has six incidences, each paired with a negative incidence
at a level-two coordinate.  A
negative Gram product occurs precisely when two roots meet at one level-two
coordinate with opposite incidence signs, so it is counted exactly once in
\[
P_-=\sum_{i:s_i=2}p_im_i.
\]
Consequently
\[
P_-\equiv\sum_{i:s_i=2} p_im_i
\equiv\sum_{i:s_i=2} m_i
=6n_6
\equiv0\pmod2,
\]
contradicting the parity already proved.

It remains to exclude odd levels.  Let \(a_t,c_t\) count coordinates at
levels \(4t+1,4t+3\), respectively.  Flow balance along the two level chains
gives the following identity.  Difference roots cancel within their chain
when signed incidences are summed, while the sum roots \(e_i+e_j\) between levels
\(1\) and \(3\), the only cross-chain type, contribute equally to both sides:
\[
\sum_{t\ge0}(4t+1)a_t
=\sum_{t\ge0}(4t+3)c_t.
\]
Using this identity with \(\lVert s\rVert^2=200\) yields
\[
\frac{200-3v}{32}
=\sum_{t\ge1}\binom{t+1}{2}(a_t+c_t).
\]
Thus \(v\equiv24\pmod {32}\), which is impossible for
\(30\le v\le51\).  All three level families are excluded, proving that \(T\)
is disconnected.
\end{proof}

\section{Distance spectra of punctured Moore graphs}\label{sec:punctures}

Let \(M\) be a degree-\(k\) Moore graph of diameter two and put
\(\Delta=\sqrt{4k-3}\).

\begin{theorem}[One deleted vertex]\label{thm:one-puncture}
For \(v\in V(M)\), put \(H=M-v\).  Then
\[
 |V(H)|=k^2,
 \qquad \delta^*(H)=k-\frac1k,
 \qquad \lambda_{\min}(D(H))=-2-\sqrt{k}.
\]
The complete distance spectrum is
\begin{align*}
\Spec D(H)={}&\{\rho_+,\rho_-\}
 \cup\{(-2+\sqrt{k})^{(k-1)},(-2-\sqrt{k})^{(k-1)}\}\\
&\cup\left\{\left(-\frac{\Delta+3}{2}\right)^{(m_+)},
\left(\frac{\Delta-3}{2}\right)^{(m_-)}\right\},
\end{align*}
where
\[
 \rho_\pm=k^2-2\pm\sqrt{k(k^3-2k^2+3k-1)},
\]
\[
 m_\pm=\frac{k(k-2)(\Delta\pm1)}{2\Delta}.
\]
Thus
\[
 \Phi(H)=k-\frac1k-2-\sqrt{k},
\]
which is positive exactly for integers \(k\ge5\).
\end{theorem}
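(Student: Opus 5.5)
The plan is to describe the recomputed distance matrix of \(H=M-v\) explicitly, and then to split \(\mathbb R^{V(H)}\) into invariant subspaces using the block notation \(C,B\) from the proof of Theorem~\ref{thm:second-subconstituent}. With \(K=k\) there, write \(V(H)=N\sqcup X\), where \(N=N(v)\) and \(X=\Gamma_2(v)\).

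\emph{Order and dual degree.} Clearly \(|V(H)|=k^2\). Vertices of \(N\) have degree \(k-1\), and all their neighbours lie in \(X\), where the degree is \(k\). Hence \(d^*=k\) on \(N\). A vertex of \(X\) has one neighbour in \(N\), of degree \(k-1\), and \(k-1\) neighbours in \(X\), each of degree \(k\). This gives \(d^*=(k-1+k(k-1))/k=k-1/k\), so \(\delta^*(H)=k-1/k\).

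\emph{Distances.} A pair of vertices of \(M\) that are both different from \(v\) keeps its \(M\)-distance in \(H\), unless its unique common neighbour in \(M\) was \(v\). That happens only for two distinct vertices \(a,b\in N\). For such a pair, pick a child \(x\) of \(a\). The common neighbour of \(x\) and \(b\) is a child \(y\) of \(b\), so \(a-x-y-b\) is a path and \(d_H(a,b)=3\). Therefore
\[
 D(H)=2J-2I-A(H)+\begin{pmatrix}J_k-I_k&0\\0&0\end{pmatrix},
 \qquad
 A(H)=\begin{pmatrix}0&C\\C^{\mathsf T}&B\end{pmatrix}.
\]

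\emph{Invariant subspaces.} I will use three families. First, \(\langle \one_N,\one_X\rangle\) is invariant because \(C\one_X=(k-1)\one_N\), \(C^{\mathsf T}\one_N=\one_X\) and \(B\one_X=(k-1)\one_X\). Its \(2\times2\) quotient matrix has trace \(2k^2-4\) and a determinant that I will compute directly; its eigenvalues are \(\rho_\pm\). Second, for each \(x\in\one_N^\perp\), the plane spanned by \((x,0)\) and \((0,C^{\mathsf T}x/\sqrt{k-1})\) is invariant. The identities \(CC^{\mathsf T}=(k-1)I\) and \(BC^{\mathsf T}x=-C^{\mathsf T}x\) give the block
\[
 \begin{pmatrix}-3&-\sqrt{k-1}\\-\sqrt{k-1}&-1\end{pmatrix},
\]
which has trace \(-4\) and determinant \(4-k\), hence eigenvalues \(-2\pm\sqrt k\), each with multiplicity \(k-1\). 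Third, on \(\ker C\) the operator \(D(H)\) acts as \(-2I-B\). Theorem~\ref{thm:second-subconstituent} supplies the eigenvalues \(r,s\) of \(B\) there, with multiplicities \(p=m_+\) and \(q=m_-\). These give \(-2-r=-(\Delta+3)/2\) and \(-2-s=(\Delta-3)/2\). The dimensions add to \(2+2(k-1)+k(k-2)=k^2\), so the spectrum is complete.

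\emph{Least eigenvalue and threshold.} The inequality \(-2-\sqrt k\le-(\Delta+3)/2\) is equivalent to \(2\sqrt k+1\ge\Delta\). Squaring turns this into \(4\sqrt k+4>0\), which holds. The main obstacle is \(\rho_-\). I will show \(\rho_->-2-\sqrt k\) by evaluating the quotient characteristic polynomial at \(-2-\sqrt k\) and checking its sign, or equivalently by a direct comparison of radicals. If that is messy, I will instead bound \(\rho_-\) below by the least diagonal entry of the symmetrized quotient minus the off-diagonal entry. Finally, set \(\Phi(H)=k-1/k-2-\sqrt k\). The case \(k=4\) gives \(-1/4<0\) and \(k=5\) gives a positive value; for \(k\ge5\) positivity follows since \(k-2-\sqrt k\ge3-\sqrt5>1/5\ge 1/k\) and \(k-\sqrt k\) increases. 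For \(k\le3\) the value is negative.
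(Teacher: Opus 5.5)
Your proposal follows the paper's proof essentially step for step: the same recomputed distance matrix, and the same orthogonal split into the cell-constant plane, the incidence modules spanned by \((x,0)\) and \((0,C^{\mathsf T}x)\), and \(0\oplus\ker C\). The only differences are cosmetic: you compute \(\delta^*(H)\) directly instead of via Theorem~\ref{thm:small-puncture}, and you quote the \(\ker C\) multiplicities from Theorem~\ref{thm:second-subconstituent} instead of rederiving them. Your primary route for \(\rho_-\) is also the paper's: \(\det\bigl(Q_1+(2+\sqrt k)I\bigr)=k(2k^2+2k^{3/2}-3k+2)>0\), and the diagonal entries of the shifted symmetrized quotient \(Q_1\) are positive, so \(\rho_->-2-\sqrt k\).

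Drop your fallback, however. The Gershgorin-type bound \(\rho_-\ge3(k-1)-(2k-1)\sqrt{k-1}\) lies below \(-2-\sqrt k\) for every \(k\ge5\). At \(k=7\) it gives \(18-13\sqrt6\approx-13.8\), while in fact \(\rho_-=47-\sqrt{1855}\approx3.9\). So it cannot establish the least-eigenvalue claim in exactly the cases that matter.
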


\begin{theorem}[Endpoints of an edge]\label{thm:edge-puncture}
Let \(uv\in E(M)\) and \(H=M-\{u,v\}\).  For \(k\ge3\),
\[
 |V(H)|=k^2-1,
 \qquad \delta^*(H)=k-\frac2k,
 \qquad \lambda_{\min}(D(H))=-2-\sqrt{k}.
\]
The complete distance spectrum is
\begin{align*}
\Spec D(H)={}&\{\sigma_+,\sigma_-,k-4\}\\
&\cup\{(-2+\sqrt{k})^{(2k-4)},(-2-\sqrt{k})^{(2k-4)}\}\\
&\cup\left\{\left(-\frac{\Delta+3}{2}\right)^{(a_+)},
\left(\frac{\Delta-3}{2}\right)^{(a_-)}\right\},
\end{align*}
where
\[
 \sigma_\pm=k^2-3\pm\sqrt{k^4-2k^3+3k^2-8k+7},
\]
\[
 a_+=\frac{(k-2)(k+(k-2)\Delta)}{2\Delta},
 \quad
 a_-=\frac{(k-2)((k-2)\Delta-k)}{2\Delta}.
\]
Thus
\[
 \Phi(H)=k-\frac2k-2-\sqrt{k},
\]
which is positive exactly for integers \(k\ge5\).
\end{theorem}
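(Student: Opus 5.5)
The plan is to mirror the proof of Theorem~\ref{thm:one-puncture}: split \(V(H)\) into the two punctured neighbourhoods and the remainder, correct the distance matrix by an incidence Gram term, and then break \(\mathbb R^{V(H)}\) into three invariant pieces.

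\emph{Partition and basic identities.} Put \(U=N(u)\setminus\{v\}\), \(V=N(v)\setminus\{u\}\) and \(W=V(H)\setminus(U\cup V)\). Then \(|U|=|V|=k-1\) and \(|W|=(k-1)^2\). Since adjacent vertices of \(M\) have no common neighbour, each \(w\in W\) has exactly one neighbour in \(U\) (its common neighbour with \(u\)) and exactly one in \(V\). Write \(C_U,C_V\) for the incidence matrices \(U\to W\) and \(V\to W\), and \(A_{UV}\) for the \(U\times V\) adjacency block. The Moore identity \(A^2=(k-1)I-A+J\), read in blocks, should give
\[
 C_UC_U^{\mathsf T}=C_VC_V^{\mathsf T}=(k-1)I,\qquad A_{UV}+C_UC_V^{\mathsf T}=J.
\]
The second identity holds because each pair \((x,y)\in U\times V\) is either adjacent or has a unique common neighbour, and that neighbour must lie in \(W\).

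\emph{Distance correction.} Deleting \(u\) and \(v\) changes exactly the distances of pairs whose unique common neighbour was \(u\) or \(v\). These are the pairs inside \(U\) and the pairs inside \(V\), and each such pair goes from distance \(2\) to distance \(3\) via a path \(x\)--\(w\)--\(c\)--\(x'\) through \(W\). Every other distance is unchanged: an adjacent pair stays at distance one, and a pair at distance two keeps its common neighbour, which is not \(u\) or \(v\). Hence
\[
 D(H)=2J-2I-A_H+(J_U-I_U)+(J_V-I_V).
\]
The dual degree is read off directly. Vertices of \(U\cup V\) have degree \(k-1\) and vertices of \(W\) have degree \(k\); each \(w\in W\) has two neighbours of degree \(k-1\), so \(d^*(w)=k-2/k\). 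A short count shows that every vertex of \(U\cup V\) has \(d^*\ge k-2/k\), which gives \(\delta^*(H)=k-2/k\).

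\emph{Invariant decomposition.}
\begin{enumerate}
\item The indicator space of the partition \(\{U,V,W\}\) is \(D(H)\)-invariant. The \(3\times3\) quotient splits under the \(U\leftrightarrow V\) swap. The antisymmetric vector should give the eigenvalue \(k-4\), and the symmetric \(2\times2\) block should give \(\sigma_\pm\).
\item For \(z\in\one_{U}^\perp\), consider \(\operatorname{span}\{z,\;C_U^{\mathsf T}z\}\), and likewise for \(V\); together these span \(4(k-2)\) dimensions. Using the identities above, \(A_H\) and the \(J\)-corrections should map this family into itself. The coupling between the \(U\)-part and the \(V\)-part enters only through \(A_{UV}=J-C_UC_V^{\mathsf T}\) and kills \(\one\)-components, so I expect a block acting like \(\bigl(\begin{smallmatrix}-2&\ast\\ \ast&\ast\end{smallmatrix}\bigr)\) with eigenvalues \(-2\pm\sqrt k\), each of multiplicity \(2k-4\).
\item On the orthogonal complement, \(D(H)\) acts as \(-2I-A\), and \(A\) satisfies \(A^2+A-(k-1)I=0\) there, as in Theorem~\ref{thm:second-subconstituent}. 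So the eigenvalues are \(-(\Delta+3)/2\) and \((\Delta-3)/2\). The dimension count gives \(a_++a_-=(k-2)^2\), and \(\tr D(H)=0\) fixes \(a_\pm\).
\end{enumerate}

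\emph{Least eigenvalue and score.} Comparing the listed eigenvalues shows the minimum is \(-2-\sqrt k\): one checks \(\sqrt k\ge(\Delta-1)/2\) for \(k\ge3\), and that \(\sigma_-\) and \(k-4\) are larger. This yields \(\Phi(H)=k-\frac2k-2-\sqrt k\). For positivity, direct evaluation shows \(\Phi\le0\) for \(k=3,4\). For \(k\ge5\), \(\Phi\) is increasing in \(k\) and \(\Phi(5)=2.6-\sqrt5>0\), so positivity holds exactly for integers \(k\ge5\).

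The main obstacle is step~(2): verifying invariance of the \(U/V\) family and computing its block exactly. Mixing through \(A_{UV}\) and through \(W\)-vertices that are adjacent to both \(C_U^{\mathsf T}z\) and \(C_V^{\mathsf T}z'\) supports requires care; concretely, one must compute \(C_VC_U^{\mathsf T}\) and \(A_W C_U^{\mathsf T}\) from the block Moore identity. I would first compute \(A_WC_U^{\mathsf T}=J-C_U^{\mathsf T}-C_V^{\mathsf T}A_{VU}\)-type relations, and if the \(2\)-dimensional ansatz fails, enlarge it to \(\{z,C_U^{\mathsf T}z,A_{VU}z,C_V^{\mathsf T}A_{VU}z\}\).
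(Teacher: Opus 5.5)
Your outline follows the paper's proof: the same three cells, the same swap-split constant quotient giving \(k-4\) and \(\sigma_\pm\), the same two-dimensional incidence modules, and a residual kernel on which the Moore quadratic holds. Fixing \(a_\pm\) with \(\tr D(H)=0\) instead of the paper's \(\tr(T|_K)=k-2\) is a harmless variant.

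The genuine gap is step~(2), which you leave as an expectation. It stays open because you missed a one-line fact: there are no edges between \(U\) and \(V\), since \(x\sim y\) with \(x\in U\), \(y\in V\) would close the \(4\)-cycle \(u-x-y-v-u\). Hence \(A_{UV}=0\), and your identity becomes \(C_UC_V^{\mathsf T}=J\). The relation you anticipated, \(A_WC_U^{\mathsf T}=J-C_U^{\mathsf T}-C_V^{\mathsf T}A_{VU}\), collapses to \(A_WC_U^{\mathsf T}=J-C_U^{\mathsf T}\). Your fallback four-dimensional ansatz built from \(A_{VU}z\) is chasing a coupling that does not exist.

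With these identities, step~(2) is a direct computation. Use the coordinate order \((U,V,W)\) and take \(z\perp\one_U\). Then \(J\) kills both generators, because \(\one_W^{\mathsf T}C_U^{\mathsf T}z=(k-1)\one_U^{\mathsf T}z=0\). Also \(C_VC_U^{\mathsf T}z=Jz=0\). Hence
\[
 D(H)(z,0,0)=(-3z,\,0,\,-C_U^{\mathsf T}z),\qquad
 D(H)(0,0,C_U^{\mathsf T}z)=(-(k-1)z,\,0,\,-C_U^{\mathsf T}z).
\]
So the block is \(\bigl(\begin{smallmatrix}-3&-(k-1)\\-1&-1\end{smallmatrix}\bigr)\), not one with diagonal entry \(-2\), and its eigenvalues are \(-2\pm\sqrt k\). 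The \(U\)- and \(V\)-modules are orthogonal because \(\langle C_U^{\mathsf T}z,C_V^{\mathsf T}z'\rangle=z^{\mathsf T}Jz'=0\).

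The same fact is used tacitly elsewhere in your sketch:
\begin{itemize}
\item the \(U\times V\) entries of your quotient require every \(U\)--\(V\) distance to be \(2\);
\item the replacement path \(x-w-c-x'\) stays in \(W\) only because \(c\notin V\);
\item \(d^*(x)=k\) for \(x\in U\) because all of its neighbours lie in \(W\).
\end{itemize}
Finally, the comparison \(\sigma_->-2-\sqrt k\) deserves a line. For instance, \(\sigma_+\sigma_-=2k^3-9k^2+8k+2\), which equals \(-1\) at \(k=3\) and is positive for \(k\ge4\). Since \(\sigma_++\sigma_-=2k^2-6>0\), this gives \(\sigma_->-1\).
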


\begin{proof}[Proof architecture for Theorems~\ref{thm:one-puncture}
and~\ref{thm:edge-puncture}]
The displayed dual-degree formulas are the cases \(s=1\) and \(s=2\) of
Theorem~\ref{thm:small-puncture}; its proof below is independent of the
spectral decompositions.  For one deleted vertex, put \(A=N(v)\),
\(B=\Gamma_2(v)\), let \(C\) be
the \(A\)-by-\(B\) incidence matrix, and let \(B_0\) be the adjacency
matrix on \(B\).  Only pairs inside \(A\) lose their unique length-two path.
For distinct \(a,a'\in A\), choose a neighbour \(b\in B\) of \(a\).  The
vertices \(b,a'\) are nonadjacent, and their unique common neighbour \(c\)
lies in \(B\): it is not \(v\), since \(b\not\sim v\), and it is not in
\(A\), since an edge inside \(A\) would form a triangle through \(v\).
Hence \(a-b-c-a'\) is a surviving path, and therefore
\[
 D(H)=\begin{pmatrix}3(J-I)&2J-C\\2J-C^{\mathsf T}&2(J-I)-B_0\end{pmatrix}.
\]
\Needspace{10\baselineskip}
The normalized constant quotient is
\[
 Q_1=\begin{pmatrix}
 3(k-1)&(2k-1)\sqrt{k-1}\\
 (2k-1)\sqrt{k-1}&2k^2-3k-1
 \end{pmatrix},
\]
whose eigenvalues are \(\rho_\pm\).  Block comparison in the Moore identity
gives
\[
 CC^{\mathsf T}=(k-1)I,\qquad
 CB_0=J-C,\qquad
 C^{\mathsf T}\one_A=\one_B,
\]
\[
 B_0^2+C^{\mathsf T}C=(k-1)I-B_0+J.
\]
Thus \(C^{\mathsf T}\) is injective and
\[
 \mathbb R^B=
 \langle\one_B\rangle\perp
 C^{\mathsf T}(\one_A^\perp)\perp K,
 \qquad K=\ker C,\qquad \dim K=k(k-2).
\]
Choose an orthogonal basis \(\mathcal B\) of \(\one_A^\perp\), and for
\(z\in\mathcal B\) put
\[
 W_z=\operatorname{span}\{(z,0),(0,C^{\mathsf T}z)\}.
\]
The cell-constant space \(W_{\mathrm{const}}\), the two-dimensional modules
\(W_z\) for \(z\in\mathcal B\), and \(0\oplus K\) are mutually orthogonal
and invariant.  Put
\[
 W_{\mathrm{inc}}=\bigoplus_{z\in\mathcal B}W_z.
\]
Consequently
\[
\mathbb R^{V(H)}=W_{\mathrm{const}}\perp W_{\mathrm{inc}}\perp(0\oplus K),
 \qquad 2+2(k-1)+k(k-2)=k^2.
\]
On each zero-sum incidence module the action matrix in these generators is
\(\bigl(\begin{smallmatrix}-3&-(k-1)\\-1&-1\end{smallmatrix}\bigr)\),
giving \(-2\pm\sqrt{k}\).  On \(K\),
\(B_0^2+B_0-(k-1)I=0\), \(D=-2I-B_0\), and the trace of \(B_0\) is
zero after removing the principal eigenvalue \(k-1\) and the \(k-1\)
copies of \(-1\) on \(C^{\mathsf T}(\one_A^\perp)\).  Dimension and trace
therefore give \(m_\pm\).

For an adjacent deleted pair, put
\[
 A=N(u)\setminus\{v\},\qquad B=N(v)\setminus\{u\},\qquad
 C=V(M)\setminus\bigl(\{u,v\}\cup A\cup B\bigr).
\]
Only pairs inside \(A\) or inside \(B\) lose a length-two path.  For
distinct \(a,a'\in A\), choose a residual neighbour \(c\) of \(a\).  The
vertices \(c,a'\) are nonadjacent, and their unique common neighbour is also
residual: it cannot be one of the deleted endpoints, cannot lie in \(A\) by
triangle-freeness, and cannot lie in \(B\) because no edge joins \(A\) to
\(B\).  This gives a surviving length-three path; the argument for \(B\) is
symmetric.  The antisymmetric constant line has eigenvalue
\(k-4\), and the normalized symmetric quotient is
\[
 Q_2=\begin{pmatrix}
 5k-8&\sqrt{(k-1)(2k-3)(4k-6)}\\
 \sqrt{(k-1)(2k-3)(4k-6)}&2k^2-5k+2
 \end{pmatrix},
\]
with eigenvalues \(\sigma_\pm\).  Let \(R_A,R_B\) be the incidence matrices
from \(A,B\) to \(C\), and let \(T\) be the adjacency matrix on \(C\).  The
Moore identity gives
\[
 R_AR_A^{\mathsf T}=R_BR_B^{\mathsf T}=(k-1)I,\qquad
 R_AR_B^{\mathsf T}=J,
\]
\[
 R_AT=J-R_A,\qquad R_BT=J-R_B,
\]
\[
 R_A^{\mathsf T}R_A+R_B^{\mathsf T}R_B+T^2
 =(k-1)I-T+J.
\]
The zero-sum row images of \(R_A\) and \(R_B\) are injective and orthogonal.
Each combines with its cell zero-sum space to form an invariant module
carrying
\(\bigl(\begin{smallmatrix}-3&-(k-1)\\-1&-1\end{smallmatrix}\bigr)\).
The residual space
\[
 K=\ker R_A\cap\ker R_B,\qquad \dim K=(k-2)^2,
\]
is invariant and satisfies
\[
 T^2+T-(k-1)I=0,\qquad D=-2I-T.
\]
These spaces and the three-dimensional cell-constant space are mutually
orthogonal and complete, since
\[
 3+2(2k-4)+(k-2)^2=k^2-1.
\]
The constant direction of \(T\) has eigenvalue \(k-2\), while its two
zero-sum incidence images contribute trace \(-2(k-2)\).  Since
\(\tr T=0\), one has \(\tr(T|_K)=k-2\), and dimension and trace give
\(a_\pm\).  The residual negative root is greater than
\(-2-\sqrt{k}\).  For \(k\ge3\), the antisymmetric constant root also satisfies
\[
 (k-4)-(-2-\sqrt{k})=k-2+\sqrt{k}>0.
\]
The same is true of the smaller constant-quotient roots:
in each case the leading diagonal entry of the shifted quotient is positive,
and
\[
 \det(Q_1+(2+\sqrt{k})I)
 =k(2k^2+2k^{3/2}-3k+2)>0,
\]
\[
 \det(Q_2+(2+\sqrt{k})I)
 =(\sqrt{k}-1)(\sqrt{k}+1)
 (2k^2+2k^{3/2}-3k+2\sqrt{k}+6)>0.
\]
Exact quotient normalization, trace-to-multiplicity equations, replacement
paths, and all least-root comparisons are independently checked by
\codefile{scripts/verify_proof_audit_05_small_moore_punctures.py}.
\end{proof}

\begin{theorem}[Two nonadjacent deleted vertices]\label{thm:nonadjacent-puncture}
Let \(u,v\) be nonadjacent vertices of \(M\), \(k\ge5\), and put
\(H=M-\{u,v\}\).  Define
\begin{align*}
R_k(x)={}&x^4+(10-2k^2)x^3+(2k^3-17k^2-2k+36)x^2\\
&+(12k^3-49k^2-4k+53)x\\
&-2k^4+17k^3-38k^2+5k+20,
\end{align*}
\[
 M_- =\frac{k(k-2)+(k^2-4k+2)\Delta}{2\Delta},
 \quad
 M_+ =\frac{-k(k-2)+(k^2-4k+2)\Delta}{2\Delta}.
\]
Then
\begin{align*}
\chi_{D(H)}(x)={}&(x-k+3)R_k(x)
 (x^2+4x-k+3)^{k-2}\\
&\cdot(x^2+4x-k+5)^{k-2}
 \left(x+\frac{\Delta+3}{2}\right)^{M_-}\\
&\cdot\left(x-\frac{\Delta-3}{2}\right)^{M_+}.
\end{align*}
Moreover,
\[
 \delta^*(H)=k-\frac2k,
\]
and \(H\) is a strict counterexample for every realizable integer \(k\ge6\).
\end{theorem}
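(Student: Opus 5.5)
I would follow the same architecture as for Theorems~\ref{thm:one-puncture} and~\ref{thm:edge-puncture}: first identify the recomputed distance matrix of \(H\), then split \(\mathbb R^{V(H)}\) into orthogonal invariant pieces built from cell-constant vectors, incidence modules, and a residual kernel.

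\emph{Cells and distances.} Let \(w\) be the unique common neighbour of \(u,v\), and put \(A=N(u)\setminus\{w\}\), \(B=N(v)\setminus\{w\}\), each of size \(k-1\). Let \(C\) be the rest of \(V(H)\setminus\{w\}\); it has \(k^2-1-2(k-1)-1=k^2-2k\) vertices. Only pairs whose unique length-two path in \(M\) ran through \(u\) or \(v\) change distance, that is, pairs inside \(N(u)\) or inside \(N(v)\). I would show, exactly as in the edge case, that each such pair acquires a surviving length-three path, so its distance becomes \(3\). The one point to check is that the replacement path avoids both \(u\) and \(v\). For the pairs \(\{w,a\}\) with \(a\in A\), I would route through a neighbour of \(w\) other than \(u,v\) and then the unique common neighbour. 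Hence \(D(H)=D(M)|_H+(J-I)\) on the \(N(u)\)-block and on the \(N(v)\)-block. Note that these two blocks overlap at \(w\).

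\emph{Decomposition.} The cells are \(\{w\}\), \(A\), \(B\), \(C\). The swap symmetry \(u\leftrightarrow v\) need not be an automorphism. Even so, the cell-constant space is four-dimensional and \(D(H)\)-invariant, because all the relevant row counts are constant by the Moore property. I would compute its normalized quotient and obtain a quartic and a linear factor. The aim is to show that the \(5\times5\)-looking system splits as \((x-k+3)R_k(x)\). Since the cell-constant space is four-dimensional, I would guess that \(k-3\) comes from an extra antisymmetric vector, namely the zero-sum combination separating the \(A\)- and \(B\)-parts inside \(C\). Next, let \(R_A,R_B\) be the incidences from \(A,B\) into \(C\). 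For \(z\in\one_A^\perp\), the vectors \((z,0)\) and \((0,R_A^{\mathsf T}z)\) span an invariant two-dimensional module, and the same holds for \(B\). These give the factors \(x^2+4x-k+3\) and \(x^2+4x-k+5\), each with multiplicity \(k-2\). Here \(R_AR_B^{\mathsf T}\) is no longer \(J\) but \(J-P\) for a matching-type correction, because \(u,v\) are nonadjacent. This asymmetry is exactly what shifts one of the two module polynomials by \(2\). The residual space \(K=\ker R_A\cap\ker R_B\cap\one_C^\perp\) carries \(T^2+T-(k-1)I=0\) and \(D=-2I-T\). The multiplicities \(M_\pm\) then follow from dimension and trace, after subtracting the known traces of \(T\) on the other summands. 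As a final check, the dimension count must total \(k^2-1\).

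\emph{Dual degree and strictness.} Vertex \(w\) loses two neighbours, so its degree drops to \(k-2\). Each vertex of \(A\cup B\) loses one. The minimum of \(d^*\) is attained at some \(a\in A\). Its neighbours are \(k-1\) vertices of \(C\), of which at most one is in \(B\)-adjacency, giving \(d^*=k-2/k\). I would also compare this value with \(d^*(w)\), which has every neighbour of degree \(k\) and so equals \(k\). For strictness, I would show that the least root of all factors is \(-2-\sqrt{k}\), coming from \(x^2+4x-k+3\). It is smaller than \(-(\Delta+3)/2\) and than the roots of \(x^2+4x-k+5\). Then \(k-2/k-2-\sqrt k>0\) for \(k\ge6\). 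The main obstacle is showing that \(R_k\) has no root below \(-2-\sqrt k\). I would first evaluate \(R_k\) at \(-2-\sqrt k\) and show the value has sign \((-1)^4\cdot\) positive. Then I would check that \(R_k'\) and \(R_k''\) have the appropriate signs to its left, or alternatively positive-definiteness of the shifted \(4\times4\) quotient through its leading principal minors, as done for \(Q_1,Q_2\).
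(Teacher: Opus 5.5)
Three concrete steps of your plan fail.

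First, the four cells \(\{w\},A,B,C\) are not equitable for \(D(H)\). A vertex of \(C\) is at distance \(1\) from \(w\) if it lies in \(C'=N(w)\setminus\{u,v\}\), and at distance \(2\) otherwise, so \(D(H)e_w\) is not constant on \(C\). Your four-dimensional space is therefore not invariant, and it could not carry the degree-five factor \((x-k+3)R_k(x)\) in any case. With the residual dimension computed below, your summands total only \(k^2-2\). The missing idea is to split \(C\) into \(C'\), of size \(k-2\), and \(Z\), of size \((k-1)(k-2)\). The five-cell partition is equitable, the eigenvalue \(k-3\) belongs to \(\one_A-\one_B\), and \(R_k\) comes from the symmetric \(4\times4\) part.

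Second, the separate \(A\)- and \(B\)-modules are not invariant. The \(A\)--\(B\) edges form a perfect matching \(P\), so the \(A\)--\(B\) distance block is \(2J-P\). Hence \(D(H)\) sends \(z\in\one_A^\perp\), placed on \(A\), to a vector with \(B\)-component \(-P^{\mathsf T}z\ne0\). Moreover, all block identities are symmetric in \(A\) and \(B\), so uncoupled modules would be mirror images with the same polynomial. No asymmetry coming from \(R_AR_B^{\mathsf T}=J-P\) can produce two different quadratics. You must identify \(A\) with \(B\) through \(P\) and pair \((\xi,\pm\xi)\) with \((R_A^{\mathsf T}\pm R_B^{\mathsf T})\xi\). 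These modules carry \(\bigl(\begin{smallmatrix}-4&-(k-3)\\-1&0\end{smallmatrix}\bigr)\) and \(\bigl(\begin{smallmatrix}-2&-(k-1)\\-1&-2\end{smallmatrix}\bigr)\).

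Your residual space, by contrast, is sound even with \(C'\) inside \(C\). Let \(T\) be the adjacency matrix of \(H[C]\). Then \(K=\ker R_A\cap\ker R_B\cap\one_C^\perp\) is invariant, satisfies \(T^2+T-(k-1)I=0\) and \(D(H)|_K=-2I-T\), and has dimension \(k^2-4k+2\). Its complement in \(\mathbb R^C\) consists of \(\langle\one_{C'},\one_Z\rangle\), on which \(T\) has trace \(k-3\), together with the symmetric and antisymmetric images, with \(T\)-eigenvalues \(-2\) and \(0\). Hence \(\tr(T|_K)=k-1\). This yields \(M_\pm\) directly and absorbs the paper's separate \(C'\)-modules.

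Third, the strictness argument targets the wrong eigenvalue. The roots of \(x^2+4x-k+3\) are \(-2\pm\sqrt{k+1}\), not \(-2\pm\sqrt k\). More seriously, the inequality you call the main obstacle is false. Already in the Hoffman--Singleton case, \(R_7(x)=x^4-88x^3-125x^2+1740x-778\) has \(R_7(-5)=-978<0\). Since \(R_7\) is monic of even degree, it has a root below \(-5\), hence below every root of the remaining factors. So the least distance eigenvalue is a root of \(R_k\), no sign check at \(-2-\sqrt k\) can succeed, and \(\Phi(H)\ne k-\frac2k-2-\sqrt k\).

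The paper never locates this root. It applies Proposition~\ref{prop:deletion-stability} with \(E_S=D(H)-D(M)[V(H)]\), which is the adjacency matrix of two copies of \(K_k\) glued at \(w\), plus isolated vertices. Its least eigenvalue is \((k-2-\sqrt{k^2+4k-4})/2>-2\). This gives \(\Phi(H)\ge\frac{3k-5-\Delta-\sqrt{k^2+4k-4}}{2}-\frac2k>0\) for \(k\ge6\). A direct spectral route would instead have to show that \(R_k\) has no root below \(-(k-\frac2k)\).

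Finally, your dual-degree step misidentifies the minimizer. A vertex \(a\in A\) has one neighbour in \(B\), of degree \(k-1\), and \(k-2\) neighbours in \(Z\), of degree \(k\). Hence \(d^*(a)=k-\frac1{k-1}\). The value \(k-\frac2k\) is attained on \(C'\) and on \(Z\); it is the case \(s=2\) of Theorem~\ref{thm:small-puncture}.
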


\begin{proof}
The deleted vertices have a unique common neighbour \(w\).  With
\(A=N(u)\setminus\{w\}\), \(B=N(v)\setminus\{w\}\),
\(C=N(w)\setminus\{u,v\}\), and
\[
 Z=V(M)\setminus\bigl(\{u,v,w\}\cup A\cup B\cup C\bigr),
\]
the surviving graph has cell sizes, in the order \(\{w\},A,B,C,Z\),
\[
 1,\quad k-1,\quad k-1,\quad k-2,\quad (k-1)(k-2).
\]
The Moore common-neighbour rule makes the \(A\)--\(B\) edges a perfect
matching and the five-cell partition equitable.  It also supplies a
length-three replacement path for every pair whose unique length-two path
used \(u\) or \(v\); hence every such new distance is exactly three.  On the
cell-constant space, the row-sum distance quotient is
\[
\begin{pmatrix}
0&3k-3&3k-3&k-2&2(k-1)(k-2)\\
3&3k-6&2k-3&2k-4&2k^2-7k+6\\
3&2k-3&3k-6&2k-4&2k^2-7k+6\\
1&2k-2&2k-2&2k-6&2k^2-7k+5\\
2&2k-3&2k-3&2k-5&2k^2-7k+5
\end{pmatrix}.
\]
Its characteristic polynomial is \((x-k+3)R_k(x)\).

Identify \(\mathbb R^A\) with \(\mathbb R^B\) through the perfect matching,
and let \(R_A,R_B,R_C\) be the incidence matrices from the three
nonconstant cells to \(Z\).  Let \(T\) be the adjacency matrix on \(Z\).
The Moore identity gives
\[
\begin{gathered}
R_AR_A^{\mathsf T}=R_BR_B^{\mathsf T}=(k-2)I,\quad
R_CR_C^{\mathsf T}=(k-1)I,\\
R_AR_B^{\mathsf T}=J-I,\quad
R_AR_C^{\mathsf T}=R_BR_C^{\mathsf T}=J,\\
R_AT+R_B=J-R_A,\quad
R_BT+R_A=J-R_B,\quad
R_CT=J-R_C,\\
R_A^{\mathsf T}R_A+R_B^{\mathsf T}R_B+R_C^{\mathsf T}R_C+T^2
=(k-1)I-T+J.
\end{gathered}
\]
Moreover, every vertex of \(Z\) has one neighbour in each of \(A,B,C\),
so
\[
 R_A^{\mathsf T}\one_A
 =R_B^{\mathsf T}\one_B
 =R_C^{\mathsf T}\one_C
 =\one_Z,
 \qquad T\one_Z=(k-3)\one_Z.
\]

Use the coordinate order \((\{w\},A,B,C,Z)\).  Choose orthogonal bases
\(\mathcal B_A\) of \(\one_A^\perp\) and
\(\mathcal B_C\) of \(\one_C^\perp\).  For
\(\xi\in\mathcal B_A\) and \(\eta\in\mathcal B_C\), put
\[
\begin{aligned}
u_\xi^\pm&=(0,\xi,\pm\xi,0,0),&
v_\xi^\pm&=(0,0,0,0,
 (R_A^{\mathsf T}\pm R_B^{\mathsf T})\xi),\\
u_\eta^C&=(0,0,0,\eta,0),&
v_\eta^C&=(0,0,0,0,R_C^{\mathsf T}\eta),
\end{aligned}
\]
and define
\[
 W_\xi^\pm=\operatorname{span}\{u_\xi^\pm,v_\xi^\pm\},
 \qquad
 W_\eta^C=\operatorname{span}\{u_\eta^C,v_\eta^C\}.
\]
The displayed identities show that the relevant incidence maps are
injective, that these two-dimensional modules are mutually orthogonal,
and that they are orthogonal to the five-dimensional cell-constant space
\(W_{\mathrm{const}}\).  For
\[
 K=\ker R_A\cap\ker R_B\cap\ker R_C
\]
one has
\[
 K\subseteq\one_Z^\perp,\qquad
 \dim K=(k-2)(k-4),\qquad T(K)\subseteq K.
\]
Thus \(W_{\mathrm{const}}\), the modules \(W_\xi^\pm,W_\eta^C\), and
\(0\oplus0\oplus0\oplus0\oplus K\) form an orthogonal direct sum, since
\[
 5+4(k-2)+2(k-3)+(k-2)(k-4)=k^2-1.
\]

In the ordered generator pairs
\((u_\xi^+,v_\xi^+)\), \((u_\xi^-,v_\xi^-)\), and
\((u_\eta^C,v_\eta^C)\), respectively, the distance operator has matrices
\[
 \begin{pmatrix}-4&-(k-3)\\-1&0\end{pmatrix},\qquad
 \begin{pmatrix}-2&-(k-1)\\-1&-2\end{pmatrix},\qquad
 \begin{pmatrix}-2&-(k-1)\\-1&-1\end{pmatrix}.
\]
These give the two displayed quadratic factors and \(k-3\) copies of
each Moore linear factor.  On \(K\), the \(Z\)-block identity gives
\[
 T^2+T-(k-1)I=0,
 \qquad D(H)|_K=-2I-T.
\]
The constant direction of \(Z\) has \(T\)-eigenvalue \(k-3\), while the
\(Z\)-images in the symmetric, antisymmetric, and common-neighbour
modules have eigenvalues \(-2,0,-1\), with dimensions
\(k-2,k-2,k-3\), respectively.  Since \(\tr T=0\),
\[
 \tr(T|_K)=2(k-2).
\]
Dimension and trace give the residual multiplicities; after adding the
common-neighbour copies, they are precisely \(M_-\) and \(M_+\).

The value \(\delta^*(H)=k-\frac2k\) is again the \(s=2\) case of
Theorem~\ref{thm:small-puncture}.  For strictness, put
\[
 E_S=D(H)-D(M)[V(H)].
\]
Up to a simultaneous permutation of rows and columns, \(E_S\) is the
adjacency matrix of two copies of \(K_k\) meeting in \(w\), together with
\(k(k-2)\) isolated vertices.  Hence
\[
 \lambda_{\min}(E_S)=\frac{k-2-\sqrt{k^2+4k-4}}2.
\]
Since the parent Moore graph has score
\(k-(3+\Delta)/2\) and deletion lowers the minimum dual degree by
\(2/k\),
Proposition~\ref{prop:deletion-stability} gives
\[
 \Phi(H)\ge
 \frac{3k-5-\Delta-\sqrt{k^2+4k-4}}2-\frac2k.
\]
For \(k\ge6\),
\[
\Delta<2\sqrt{k}-\frac{3}{4\sqrt{k}},
\qquad
\sqrt{k^2+4k-4}<k+2-\frac4{k+2}.
\]
The lower bound is therefore greater than
\[
 f(k)=k-\frac72-\sqrt{k}+\frac{3}{8\sqrt{k}}
       +\frac2{k+2}-\frac2k.
\]
Now \(f(6)=29/12-15\sqrt6/16>0\), since
\(116^2>6\cdot45^2\).  Moreover \(f'(x)>0\) for \(x\ge6\):
the absolute values of the three negative terms in \(f'(x)\) have sum less than
\(1/4+1/64+1/32<1\), while \(2/x^2>0\).  Thus \(\Phi(H)>0\).
The direct-sum decomposition, injectivity, orthogonality, all recomputed
distances, and the strict comparison
are independently checked by
\codefile{scripts/verify_proof_audit_03_nonadjacent_puncture.py} and
\codefile{scripts/verify_research_extensions_exact.py}.
\end{proof}

\begin{proposition}[Deletion stability]\label{prop:deletion-stability}
Let \(G\) be connected with at least two vertices, let
\(S\subseteq V(G)\), and suppose \(H=G-S\) is connected with at least two
vertices.  Put
\[
 D_0=D(G)[V(H)],
 \qquad E_S=D(H)-D_0,
\]
\[
 a=\delta^*(G),
 \quad b=\delta^*(H),
 \quad \gamma=\Phi(G).
\]
Then
\[
 \resultbox{
 \Phi(H)\ge\gamma-(a-b)+\lambda_{\min}(E_S).
 }
\]
\end{proposition}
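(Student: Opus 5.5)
The plan is to unwind the definition of \(\Phi\) and split the difference into a dual-degree term and a spectral term. By definition,
\[
 \Phi(H)=\delta^*(H)+\lambda_{\min}(D(H))=b+\lambda_{\min}(D_0+E_S),
\qquad
 \gamma=a+\lambda_{\min}(D(G)).
\]
The target inequality therefore reduces to the purely spectral statement
\[
 \lambda_{\min}(D_0+E_S)\ge\lambda_{\min}(D(G))+\lambda_{\min}(E_S).
\]
Indeed, once this holds, adding \(b\) to both sides and writing \(b=a-(a-b)\) gives exactly \(\Phi(H)\ge\gamma-(a-b)+\lambda_{\min}(E_S)\). No sign assumption on \(a-b\) is needed, since it enters only through the identity \(b=a-(a-b)\).

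For the spectral statement I would take two standard steps.

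\emph{Interlacing for \(D_0\).} The matrix \(D_0=D(G)[V(H)]\) is a principal submatrix of the real symmetric matrix \(D(G)\). Cauchy interlacing, or directly the Rayleigh quotient restricted to vectors supported on \(V(H)\), gives \(\lambda_{\min}(D_0)\ge\lambda_{\min}(D(G))\).

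\emph{Weyl's inequality for the sum.} Both \(D_0\) and \(E_S\) are symmetric. For any unit vector \(x\in\mathbb R^{V(H)}\),
\[
 x^{\mathsf T}(D_0+E_S)x=x^{\mathsf T}D_0x+x^{\mathsf T}E_Sx\ge\lambda_{\min}(D_0)+\lambda_{\min}(E_S).
\]
Minimizing over \(x\) and chaining with the interlacing step yields the displayed spectral bound.

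The connectivity hypotheses on \(G\) and \(H\) are only needed so that \(D(G)\), \(D(H)\), \(\delta^*\) and \(\Phi\) are well defined with finite distances. The condition of at least two vertices in \(H\) ensures \(\delta^*(H)\) makes sense with positive degrees. I would remark that \(E_S\) is entrywise nonnegative, since deletion cannot shorten distances, but the argument does not use this fact.

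There is no real obstacle here; the only care needed is bookkeeping of signs. In particular, \(-(a-b)\) must come from \(b=a-(a-b)\) and not from an inequality, so that the bound is an exact consequence of interlacing plus Weyl with nothing lost in the dual-degree term.
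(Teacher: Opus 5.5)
Your proposal is correct and is essentially the paper's proof: principal-submatrix interlacing gives \(\lambda_{\min}(D_0)\ge\lambda_{\min}(D(G))\), and Weyl's inequality handles the addition of \(E_S\). The only difference is bookkeeping: the paper folds the shifts into the single identity \(D(H)+bI=(D_0+aI)+E_S-(a-b)I\), whereas you first reduce to the unshifted spectral inequality and add \(b=a-(a-b)\) afterwards.
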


\begin{proof}
One has
\[
 D(H)+bI=(D_0+aI)+E_S-(a-b)I.
\]
Principal-submatrix interlacing gives
\(\lambda_{\min}(D_0+aI)\ge\gamma\), and Weyl's inequality gives the result.
For Moore punctures, \(E_S\) is the explicit distance-increase matrix
described above.
\end{proof}

\Needspace{8\baselineskip}
\section{Small punctures and exact Hoffman--Singleton robustness}
\label{sec:robustness}

\begin{theorem}[Small-puncture normal form]\label{thm:small-puncture}
Let \(M\) be a degree-\(k\) Moore graph of diameter two, let
\(S\subseteq V(M)\) have size \(s\le k-1\), and put \(H=M-S\).  Then \(H\)
is connected, has diameter at most three, and
\[
 \resultbox{\delta^*(H)=k-\frac{s}{k}.}
\]
Let \(B\) be the surviving-vertex by deleted-vertex incidence matrix, with
\(B_{xz}=1\) exactly when \(x\sim_M z\).  Then
\[
 \resultbox{
 D(H)=2(J-I)-A(H)+BB^{\mathsf T}
 -\operatorname{diag}(BB^{\mathsf T}).
 }
\]
\end{theorem}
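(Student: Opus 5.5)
The plan is to verify the distance formula pair by pair, using only the Moore property (adjacent vertices have no common neighbour and nonadjacent vertices have exactly one), as in Theorem~\ref{thm:moore-threshold}. Connectivity and the diameter bound will fall out of the same argument. The dual-degree formula will then come from a separate counting argument.

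\emph{Distances.} Take distinct surviving vertices \(x,y\).
\begin{itemize}
\item If \(x\sim y\), the distance stays \(1\). Here \((BB^{\mathsf T})_{xy}=0\), because adjacent vertices have no common neighbour.
\item If \(x\not\sim y\), let \(z\) be their unique common neighbour in \(M\). Then \((BB^{\mathsf T})_{xy}=1\) exactly when \(z\in S\), and \(0\) otherwise.
\item If \(z\notin S\), then \(d_H(x,y)=2\), as the formula requires.
\item If \(z\in S\), then \(d_H(x,y)\ge3\), and I must exhibit a surviving path of length three.
\end{itemize}
For each \(a\in N(x)\setminus\{z\}\) I argue as follows.
\begin{itemize}
\item \(a\not\sim y\), since otherwise \(a\) would be a second common neighbour of \(x\) and \(y\).
\item So \(a\) and \(y\) have a unique common neighbour \(b_a\), and \(b_a\ne x\).
\item \(b_a\ne z\), since otherwise \(x,a,z\) would form a triangle.
\item The \(b_a\) are pairwise distinct, since \(b_a=b_{a'}\) would give the \(4\)-cycle \(x-a-b_a-a'-x\).
\end{itemize}
This produces \(k-1\) paths \(x-a-b_a-y\). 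Their first interior vertices lie in \(N(x)\setminus\{z\}\) and their second ones in \(N(y)\setminus\{z\}\). These two sets are disjoint, because a vertex in both would be a common neighbour of \(x,y\) other than \(z\). Hence each deleted vertex other than \(z\) destroys at most one path. At most \(s-1\le k-2\) paths are destroyed, so at least one survives, and \(d_H(x,y)=3\). This proves the displayed formula for \(D(H)\), and with it that \(H\) is connected of diameter at most three.

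\emph{Dual degree.} Fix a surviving vertex \(x\).
\begin{itemize}
\item Let \(t=|N(x)\cap S|\), so \(d_H(x)=k-t\).
\item For a surviving neighbour \(u\) of \(x\), let \(c_u=|N(u)\cap S|\), so \(\sum_{u\in N_H(x)}d_H(u)=(k-t)k-e\) with \(e=\sum_u c_u\).
\item Each pair \((u,w)\) counted in \(e\) has \(w\in S\) and \(u\) a common neighbour of \(x\) and \(w\).
\item If \(w\sim x\), no such \(u\) exists (triangle-freeness). If \(w\not\sim x\), there is at most one such \(u\), namely the unique common neighbour.
\end{itemize}
Therefore \(e\le s-t\), and
\[
 d^*(x)=k-\frac{e}{k-t}\ge k-\frac{s-t}{k-t}\ge k-\frac sk.
\]
The last inequality holds because \((s-t)/(k-t)\) is nonincreasing in \(t\) when \(s<k\).

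For equality, choose \(x\notin S\cup N(S)\). Such a vertex exists because
\[
 |S\cup N(S)|\le s(k+1)\le k^2-1<k^2+1 .
\]
Then \(t=0\). Every \(w\in S\) is nonadjacent to \(x\), and its common neighbour with \(x\) is not in \(S\), since \(x\notin N(S)\). Hence \(e=s\), and \(d^*(x)=k-s/k\).

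The main obstacle is the replacement-path step. A naive count lets a deleted vertex block two paths, which gives no contradiction against \(s-1\) deletions. The key observation is that the two interior vertex sets \(N(x)\setminus\{z\}\) and \(N(y)\setminus\{z\}\) are disjoint. This is exactly what makes the bound \(s\le k-1\) sharp for the argument.
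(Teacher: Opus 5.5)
Your proof is correct and follows essentially the same route as the paper. The shared ingredients are the \(k-1\) internally disjoint replacement paths \(x-a-b_a-y\) for the distance formula, the count \(\sum_{u\in N_H(x)}t_u\le s-t_x\) combined with monotonicity in \(t_x\), and attainment at a vertex outside \(S\cup N(S)\); the paper describes that vertex as lying in \(\bigcap_{z\in S}\Gamma_2(z)\), and the only cosmetic difference is that you exclude \(b_a=a'\) using the disjointness of \(N(x)\setminus\{z\}\) and \(N(y)\setminus\{z\}\), where the paper uses a triangle through \(x\).
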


\begin{proof}
The case \(s=0\) is immediate, so assume \(1\le s\le k-1\).  Let
\(x,y\in V(H)\) be nonadjacent in \(M\), and suppose their unique common
neighbour \(z\) lies in \(S\).  For every
\(a\in N_M(x)\setminus\{z\}\), the vertices \(a,y\) are nonadjacent and
have a unique common neighbour \(b_a\).  Thus
\[
 x-a-b_a-y
\]
is a length-three path.  These \(k-1\) paths are internally vertex-disjoint:
a shared vertex \(b_a=b_{a'}\) would give a four-cycle, while
\(b_a=a'\) would give a triangle through \(x\).  Besides \(z\), at most
\(s-1\le k-2\) vertices are deleted, so one path survives.  Hence a destroyed
length-two path becomes distance exactly three, which proves the matrix formula.

For \(x\in V(H)\), let \(t_x=|N_M(x)\cap S|\).  A deleted neighbour of
\(x\) is adjacent to no surviving neighbour of \(x\), by triangle-freeness;
each other deleted vertex has at most one common neighbour with \(x\).  Thus
\[
 \sum_{y\in N_H(x)}t_y\le s-t_x.
\]
Consequently,
\[
 d_H^*(x)
 \ge k-\frac{s-t_x}{k-t_x}
 \ge k-\frac{s}{k},
\]
where the last inequality follows from
\[
 \left(k-\frac{s-t}{k-t}\right)-\left(k-\frac{s}{k}\right)
 =\frac{t(k-s)}{k(k-t)}\ge0.
\]
The intersection bound
\[
 \left|\bigcap_{z\in S}\Gamma_2(z)\right|
 \ge k^2+1-s(k+1)\ge2
\]
provides a surviving vertex \(x\) at distance two from every deleted vertex.
For \(z\in S\), let \(y_z\) be the unique common neighbour of \(x,z\).  If
\(y_z\in S\), then the choice of \(x\) would require
\(d_M(x,y_z)=2\), contradicting \(x\sim y_z\).  Thus all witnesses survive,
each deleted vertex contributes exactly once to the neighbour-degree deficit,
and equality is attained.  The
replacement paths, boundary case \(s=k-1\), distance formula, and attainment
step are independently checked by
\codefile{scripts/verify_proof_audit_12_small_puncture.py}.
\end{proof}

\begin{corollary}[Uniform deletion stability]
\label{cor:uniform-deletion}
Under the hypotheses of Theorem~\ref{thm:small-puncture}, put
\[
 t_x=|N_M(x)\cap S|,
 \qquad
 \tau(S)=\max_{x\in V(M-S)}t_x.
\]
Then
\[
 \resultbox{
 \Phi(M-S)\ge
 k-\frac{3+\sqrt{4k-3}}2-\frac{s}{k}-\tau(S).
 }
\]
In particular, the deletion is a strict counterexample whenever the
right-hand side is positive.  Since \(\tau(S)\le s\), every deletion of
\(s\) vertices is strict whenever
\[
 \resultbox{
 s\left(1+\frac1k\right)
 <k-\frac{3+\sqrt{4k-3}}2.
 }
\]
For \(k>3\), equivalently, every deletion of at most
\[
 r_k=\min\left\{
 k-1,\
 \left\lceil
 \frac{k}{k+1}\left(k-\frac{3+\sqrt{4k-3}}2\right)
 \right\rceil-1
 \right\}
\]
vertices is strict.  In particular, \(r_7=2\); for a hypothetical
degree-\(57\) Moore graph, whose existence remains open
\cite{SmithMontemanni2026}, one would have \(r_{57}=47\).
\end{corollary}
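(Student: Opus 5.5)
The plan is to feed the explicit correction matrix from Theorem~\ref{thm:small-puncture} into Proposition~\ref{prop:deletion-stability}, with \(G=M\) and \(H=M-S\). The proof has three steps.

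\textbf{Step 1: set up the stability bound.} By Theorem~\ref{thm:moore-threshold}, the parent score is
\[
 \gamma=\Phi(M)=k-\frac{3+\sqrt{4k-3}}2 .
\]
Regularity gives \(a=\delta^*(M)=k\). Theorem~\ref{thm:small-puncture} gives \(b=\delta^*(H)=k-s/k\), so \(a-b=s/k\). It also shows that \(H\) is connected, with at least \(k^2+1-s\ge2\) vertices, so the proposition applies.

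\textbf{Step 2: identify and bound the correction matrix.} Since \(D(M)=2J-2I-A(M)\), its principal submatrix on \(V(H)\) is
\[
 D_0=2(J-I)-A(H).
\]
Comparing with the distance formula of Theorem~\ref{thm:small-puncture} gives
\[
 E_S=BB^{\mathsf T}-\operatorname{diag}(BB^{\mathsf T}).
\]
Here \(BB^{\mathsf T}\succeq0\), and its diagonal entries are \((BB^{\mathsf T})_{xx}=t_x\). Hence
\[
 \lambda_{\min}(E_S)\ge\lambda_{\min}(BB^{\mathsf T})-\max_x t_x\ge-\tau(S).
\]
Substituting into Proposition~\ref{prop:deletion-stability} gives the first boxed inequality.

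\textbf{Step 3: the uniform criterion and \(r_k\).} Trivially \(t_x\le s\), so \(\tau(S)\le s\). The right-hand side is therefore at least
\[
 k-\frac{3+\sqrt{4k-3}}2-s\left(1+\frac1k\right),
\]
and it is positive under the second boxed condition. Write \(c=\frac{k}{k+1}\bigl(k-\frac{3+\sqrt{4k-3}}2\bigr)\). For \(k>3\) we have \(c>0\). The strict inequality \(s<c\) holds for integers \(s\) exactly when \(s\le\lceil c\rceil-1\); this is correct both when \(c\) is an integer and when it is not. We also need \(s\le k-1\) for Theorem~\ref{thm:small-puncture} to apply, which produces the minimum defining \(r_k\). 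The condition is monotone in \(s\), so every deletion of at most \(r_k\) vertices is strict.

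\textbf{Step 4: the numerical values.}
\begin{itemize}
\item For \(k=7\): \(\sqrt{25}=5\), so \(\Phi(M)=3\) and \(c=\tfrac78\cdot3=2.625\). This gives \(r_7=\min\{6,2\}=2\).
\item For \(k=57\): \(\sqrt{225}=15\), so \(\Phi(M)=48\) and \(c=\tfrac{57}{58}\cdot48\approx47.17\). This gives \(r_{57}=\min\{56,47\}=47\).
\end{itemize}

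The main point needing care is the sign bookkeeping in Step 2. One must check that the diagonal subtraction is exactly \(\operatorname{diag}(t_x)\), and that the positive-semidefinite Gram part can only help. Everything after that is routine.
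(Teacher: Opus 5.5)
Your proposal is correct and follows essentially the same route as the paper: read off \(E_S=BB^{\mathsf T}-\operatorname{diag}(t_x)\) from the small-puncture distance formula, bound \(\lambda_{\min}(E_S)\ge-\tau(S)\) using positive semidefiniteness of the Gram part, and substitute \(\gamma=\Phi(M)\) and \(a-b=s/k\) into Proposition~\ref{prop:deletion-stability}. Your integer-rounding argument for \(r_k\) and your checks \(r_7=2\) and \(r_{57}=47\) make explicit steps that the paper's proof leaves implicit.
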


\begin{proof}
By Theorem~\ref{thm:small-puncture}, the distance-increase matrix is
\[
 E_S=BB^{\mathsf T}-\operatorname{diag}(t_x).
\]
Since \(BB^{\mathsf T}\succeq0\) and
\(\operatorname{diag}(t_x)\preceq\tau(S)I\), one has
\(\lambda_{\min}(E_S)\ge-\tau(S)\).  The parent Moore graph has score
\[
 \Phi(M)=k-\frac{3+\sqrt{4k-3}}2,
\]
and Theorem~\ref{thm:small-puncture} shows that deletion lowers the minimum
dual degree by \(s/k\).  Proposition~\ref{prop:deletion-stability} now gives
the first bound.  The second follows from \(\tau(S)\le s\), and solving its
strict scalar inequality for the largest integral \(s\) gives \(r_k\).
\end{proof}

\begin{theorem}[Hoffman--Singleton robustness radius]\label{thm:hs-radius}
Let \(M\) be the Hoffman--Singleton graph.  Every induced graph \(M-S\) with
\(|S|\le5\) is a strict counterexample to WOW-284.  This is sharp in the
universal sense: there exists a six-vertex set whose deletion is not strict.
Hence the universal vertex-deletion robustness radius is exactly five.
\end{theorem}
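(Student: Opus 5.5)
Since $5\le k-1=6$, Theorem~\ref{thm:small-puncture} applies to every $S$ with $|S|=s\le5$. It gives three facts: $M-S$ is connected of diameter at most three, $\delta^*(M-S)=7-s/7$, and
\[
 D(M-S)=2(J-I)-A(M-S)+BB^{\mathsf T}-\operatorname{diag}(BB^{\mathsf T}).
\]
Girth at least five and order at least three are inherited from $M$. Strictness therefore reduces to the matrix inequality
\[
 D(M-S)+\Bigl(7-\tfrac s7\Bigr)I\succ0 .
\]
Corollary~\ref{cor:uniform-deletion} already settles $s\le2$, since $r_7=2$. For $s\ge3$ the crude bound $\tau(S)\le s$ is too weak, so I would refine it in two ways. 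First, take $\lambda_{\min}(E_S)$ exactly rather than bounding it by $-\tau(S)$. Second, use $\lambda_{\min}(D_0+7I)$ for the principal submatrix $D_0=D(M)[V(H)]$, which can exceed $\Phi(M)=3$. Proposition~\ref{prop:deletion-stability} then applies with the true $\lambda_{\min}(E_S)$.

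The main tool is a finite orbit enumeration. The automorphism group of the Hoffman--Singleton graph has order $252000$ and acts transitively on vertices, edges and nonedges. I would enumerate the orbits of $k$-subsets for $k\le5$, via canonical augmentation or orbit-stabilizer closure on the coordinate model of Proposition~\ref{prop:hs-coordinate}. For each orbit representative I would build $D(M-S)$ from the displayed normal form with integer entries and certify
\[
 7D(M-S)+(49-s)I\succ0
\]
by an exact rational $LDL^{\mathsf T}$ factorization with all pivots positive, as was done for $H_{39}$ in Theorem~\ref{thm:explicit-examples}. Many orbits could be dispatched analytically by the refined Weyl bound. If $S$ is independent with $\tau(S)\le1$, then $E_S=BB^{\mathsf T}-\operatorname{diag}(t_x)$ has $\lambda_{\min}\ge-1$, and $3-5/7-1>0$ already suffices. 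Exact certificates would then be needed only for the configurations containing edges or vertices with several deleted neighbours.

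For sharpness I would look for a six-set $S$ on which both the dual-degree loss and the distance perturbation concentrate. Natural candidates are the closed neighbourhood of a vertex minus one neighbour, a $5$-cycle together with a vertex, or a path on six vertices. In a Petersen-type configuration some surviving vertex may lose two or three neighbours, which lowers $\delta^*$ below $7-6/7$ and creates many distance-three pairs. For such a candidate I would compute $\delta^*(M-S)$ exactly from degree counts. I would then exhibit a rational test vector $x$ with $x^{\mathsf T}\bigl(D+\delta^*I\bigr)x\le0$, or an exact Sturm isolation of a factor of $\chi_{D(M-S)}$, to show $\Phi\le0$. A search over six-set orbits, ranked by the least eigenvalue computed numerically, would pick the witness, and the verification itself would be exact.

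The main obstacle is completeness and exactness at $s=5$. The number of orbits of $5$-subsets is in the hundreds to thousands, and each orbit needs an exact positive-definiteness certificate for a matrix of order about $45$. The orbit classification must be provably complete, which I would check with an orbit-stabilizer count summing to $\binom{50}{5}$. Finding a witness six-set whose score is nonpositive, rather than merely small, is the second risk; I would first try $N[v]\setminus\{w\}$-type sets.
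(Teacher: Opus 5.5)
Your plan follows the paper's proof in outline. It takes \(\delta^*(M-S)=(49-s)/7\) from Theorem~\ref{thm:small-puncture} and uses Corollary~\ref{cor:uniform-deletion} for \(s\le2\). It then needs orbit representatives under the automorphism group of order \(252000\), with orbit sizes summing to \(\binom{50}{s}\), exact rational certificates that \(7D(M-S)+(49-s)I\succ0\), and an exact indefiniteness certificate for one explicit six-set. The enumeration is far smaller than you expect: the orbit counts for \(s=0,\dots,5\) are \(1,1,2,4,11,33\), so only \(52\) representatives need certificates.

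The weak point is the sharpness half. You give no witness, and the heuristics you would use to find one point the wrong way.

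First, \(\delta^*\) cannot fall below \(7-6/7\). Since \(6\le k-1\), Theorem~\ref{thm:small-puncture} gives \(\delta^*(M-S)=43/7\) for \emph{every} six-set: a surviving vertex losing \(t_x\) neighbours still has \(d^*_H(x)\ge7-(6-t_x)/(7-t_x)\ge43/7\). So non-strictness must come entirely from \(\lambda_{\min}(D(M-S))<-43/7\).

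Second, Corollary~\ref{cor:uniform-deletion} with the true \(\tau(S)\) gives \(\Phi(M-S)\ge3-s/7-\tau(S)>0\) whenever \(\tau(S)\le2\) and \(s\le6\). Hence a non-strict six-set must contain three deleted neighbours of some surviving vertex. Your suggested candidates are subsets of \(N[v]\) containing \(v\), a pentagon plus a vertex, and a six-vertex path. All of these have \(\tau(S)\le2\), so they are provably strict and cannot serve as witnesses.

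For the same reason, your analytic shortcut ``independent with \(\tau(S)\le1\)'' is empty for \(s\ge2\). Two nonadjacent deleted vertices have a unique common neighbour, and that neighbour survives when \(S\) is independent. The useful criterion is \(\tau(S)\le2\), which dispatches all such sets for every \(s\le6\) and leaves exact certificates only for \(\tau(S)\ge3\).

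The paper's witness is \(\{P_{2,4},P_{3,1},P_{3,4},Q_{2,1},Q_{3,4},Q_{4,4}\}\). It is in fact an independent set, and \(P_{3,0}\) loses four neighbours. For it, \(7D(M-S)+43I\) has an exact \(LDL^{\mathsf T}\) factorization with exactly one negative pivot and no zero pivot. Your fallback scan over six-set orbits would still find such a set, but it should be restricted to \(\tau(S)\ge3\).
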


\begin{proof}
Theorem~\ref{thm:small-puncture} gives
\[
 \delta^*(M-S)=\frac{49-|S|}{7}.
\]
Corollary~\ref{cor:uniform-deletion} already proves strictness uniformly for
\(|S|\le2\).  To obtain the sharp universal radius, two explicitly stored
permutations are verified edge-by-edge as automorphisms of the coordinate
graph; the group they generate has order \(252000\).  Their orbits on
deletion sets of sizes \(0,1,2,3,4,5\) have counts
\[
 1,1,2,4,11,33.
\]
For every one of the \(52\) representatives, exact fraction arithmetic gives
\[
 7D(M-S)+(49-|S|)I\succ0.
\]
Orbit-size sums equal \(\binom{50}{s}\), so every labelled set is covered.

For sharpness, delete
\[
 \{P_{2,4},P_{3,1},P_{3,4},Q_{2,1},Q_{3,4},Q_{4,4}\}.
\]
The resulting graph has \(\delta^*=43/7\), while an exact
\(LDL^{\mathsf T}\) decomposition of \(7D(M-S)+43I\) has exactly one negative
and no zero pivot.  Since \(L\) is invertible, Sylvester's law of inertia makes
\(7D(M-S)+43I\) indefinite, so the graph is not strict.  Generator action,
orbit exhaustion, BFS distances, the small-puncture formula, and a handwritten rational
\(LDL^{\mathsf T}\) implementation are independently checked by
\codefile{scripts/verify_proof_audit_13_hs_robustness.py}.
\end{proof}

\begin{remark}
The theorem asserts that every deletion through size five succeeds and that at
least one deletion of size six fails.  It does not assert that every
six-vertex deletion fails.
\end{remark}

\section{Equality and obstructions to natural construction families}
\label{sec:controls}

\begin{theorem}[Equality boundary]\label{thm:equality-boundary}
Let \(G\) be connected, \(k\)-regular, of girth at least five and diameter
three.  Then
\[
 \Phi(G)=0
 \quad\Longleftrightarrow\quad
 \max_{\theta\ne k}|\theta+1|=\sqrt{2k-2}.
\]
Equivalently, \(D+kI\) is positive semidefinite and singular.  If \(2k-2\) is
not a square, the two boundary adjacency eigenvalues occur with equal
multiplicity, so the distance eigenvalue \(-k\) has even multiplicity.  If
\(2k-2\) is a square, then \(k=2r^2+1\) for some integer \(r\).
\end{theorem}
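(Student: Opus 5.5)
The plan is to read everything off Theorem~\ref{thm:diameter-three-score}. That theorem gives
\[
 \Phi(G)=2k-2-\max_{\theta\ne k}(\theta+1)^2,
\]
so \(\Phi(G)=0\) holds exactly when \(\max_{\theta\ne k}|\theta+1|=\sqrt{2k-2}\). For the matrix reformulation I would use \(D+kI=3J+(2k-2)I-(A+I)^2\). On \(\one^\perp\) its eigenvalues are \(2k-2-(\theta+1)^2\). On \(\one\) its eigenvalue is \(3n-k^2-3\), which is positive because \(n\ge k^2+2\) in diameter three. Hence \(D+kI\) is positive semidefinite and singular exactly when every nonprincipal \((\theta+1)^2\le2k-2\) with equality attained, which is the same as \(\Phi=0\).

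For the multiplicity claim, suppose \(2k-2\) is not a square. The boundary eigenvalues \(\theta_\pm=-1\pm\sqrt{2k-2}\) are then the two roots of the irreducible rational quadratic \(x^2+2x-(2k-3)\). The characteristic polynomial of \(A\) lies in \(\mathbb Z[x]\), so Galois conjugation \(\sqrt{2k-2}\mapsto-\sqrt{2k-2}\) preserves multiplicities, and \(\theta_+\) and \(\theta_-\) occur equally often. Both map to the distance eigenvalue \(\mu(\theta_\pm)=k-2-(2k-2)=-k\). No other eigenvalue gives \(-k\): among the nonprincipal ones only \((\theta+1)^2=2k-2\) does, and the principal distance eigenvalue is positive. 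The multiplicity of \(-k\) is therefore \(2m\), which is even.

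If \(2k-2=s^2\) is a square, then \(s\) is even since \(s^2\) is even. Writing \(s=2r\) gives \(2k-2=4r^2\), that is, \(k=2r^2+1\).

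The only delicate point is checking that \(-k\) receives no contribution from the principal eigenvalue. This follows from \(3n-k^2-k-3>0\), using \(n\ge k^2+2\).
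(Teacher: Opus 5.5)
Your proposal is correct and follows the paper's proof. You read the equivalence off Theorem~\ref{thm:diameter-three-score}, use the Galois conjugacy of \(-1\pm\sqrt{2k-2}\) in the integral characteristic polynomial for the nonsquare case, and use the parity of \(s\) in \(2k-2=s^2\) for the square case; your check that the principal eigenvalue \(3n-k^2-3\) of \(D+kI\) is positive (via \(n\ge k^2+2\)) supplies a detail the paper leaves implicit.
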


\begin{proof}
This is immediate from Theorem~\ref{thm:diameter-three-score}.  In the
nonsquare case, the two boundary values are algebraic conjugates, so their
multiplicities in the integral characteristic polynomial agree.  In the square
case, \(2k-2=s^2\) forces \(s\) even.  The exact scalar audit is
\codefile{scripts/verify_equality_boundary.py}.
\end{proof}

J{\o}rgensen's \(9\)-regular order-\(96\) graph of girth five is an exact equality
case:
\[
 \delta^*=9,
 \qquad \lambda_{\min}(D)=-9,
\]
with multiplicity eight.  The construction is due to J{\o}rgensen
\cite{Jorgensen2005}; the three local graph representations, handwritten
graph6 decoder, characteristic polynomials, root intervals, and provenance
boundary are checked by
\codefile{scripts/verify_proof_audit_09_jorgensen96.py}.

\subsection{A prime-field obstruction}

For an odd prime \(q\ge7\) and \(1\le m\le q\), define \(G(q,m)\) on
vertices \(P_{i,j},Q_{k,\ell}\), with \(0\le i,k<m\) and
\(j,\ell\in\mathbb F_q\), by
\begin{align*}
P_{i,j}&\sim P_{i,j\pm1},\\
Q_{k,\ell}&\sim Q_{k,\ell\pm2},\\
P_{i,j}&\sim Q_{k,ik+j}.
\end{align*}
This is a balanced specialization of known finite-field girth-five
constructions \cite{AbreuEtAl2008}.  It is \((m+2)\)-regular.  A coordinate
common-neighbour calculation shows that it has no triangle or \(4\)-cycle for
\(q\ge7\): for a cross pair, the possible same-side common neighbours require
residues in the disjoint sets \(\{\pm1\}\) and \(\{\pm2\}\).

\begin{theorem}\label{thm:prime-field}
If \(G(q,m)\) has diameter three, then it is not a strict counterexample to
WOW-284.
\end{theorem}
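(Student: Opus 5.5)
The plan is to combine the diameter-three score formula with the order bounds of Section~\ref{sec:lp}, so that only finitely many pairs \((q,m)\) are left, and then settle those directly. Put \(k=m+2\) and \(n=2mq\). The coordinate computation quoted before the statement shows that \(G(q,m)\) is \(k\)-regular with girth at least five. Assume it has diameter three and, for contradiction, that it is a strict counterexample. Then Theorem~\ref{thm:diameter-three-score} requires \(|\theta+1|<\sqrt{2k-2}=\sqrt{2m+2}\) for every nonprincipal adjacency eigenvalue \(\theta\). Theorem~\ref{thm:regular-degree-six} also gives \(k\ge6\), that is, \(m\ge4\).

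First I would block-diagonalize \(A\) by Fourier analysis in the second coordinate. For a character \(\omega^{tj}\) of \(\mathbb F_q\), the space of vectors \(P_{i,j}\mapsto x_i\omega^{tj}\), \(Q_{k,\ell}\mapsto y_k\omega^{t\ell}\) is \(A\)-invariant. On it \(A\) acts by
\[
 M_t=\begin{pmatrix}(\omega^t+\omega^{-t})I_m&F_t\\ F_t^{*}&(\omega^{2t}+\omega^{-2t})I_m\end{pmatrix},
 \qquad (F_t)_{ik}=\omega^{tik}.
\]
Only the trivial character \(t=0\) is needed in general. There \(F_0=J_m\), and \(M_0\) has eigenvalues \(2+m=k\) (principal), \(2-m\), and \(2\) with multiplicity \(2(m-1)\). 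The nonprincipal eigenvalue \(\theta=2-m\) gives
\[
 (\theta+1)^2-(2k-2)=(m-3)^2-(2m+2)=(m-1)(m-7),
\]
which is nonnegative for \(m\ge7\). Hence every \(m\ge7\) is not strict, and the case \(m=7\) is an equality case.

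It remains to handle \(m\in\{4,5,6\}\), that is, \(k\in\{6,7,8\}\), with \(q\ge7\) prime and \(n=2mq\). For this I would use Theorem~\ref{thm:three-to-one}, or Corollary~\ref{cor:low-degree-windows}.
\begin{itemize}
\item For \(k=6\), the bound \(n\le50\) fails because \(n=8q\ge56\).
\item For \(k=7\), the bound \(n\le74\) leaves only \(q=7\), with \(n=70\).
\item For \(k=8\), the bound \(n\le108\) leaves only \(q=7\), with \(n=84\).
\end{itemize}
Thus only \((q,m)=(7,5)\) and \((7,6)\) survive.

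For these two graphs I would find a violating eigenvalue directly. The natural place to look is among the blocks \(M_t\) with \(t\ne0\), which have size \(2m\) over \(\mathbb Q(\omega_7)\). The entries of \(F_tF_t^*\) are incomplete character sums \(\sum_{k<m}\omega^{t(i-i')k}\). I would compute the characteristic polynomial of each \(M_t\) in exact cyclotomic arithmetic, or equivalently the characteristic polynomial of \(A\) over \(\mathbb Z\). I would then locate a root with \((\theta+1)^2\ge 2k-2\) by Sturm isolation, which means \(\theta\ge-1+\sqrt{12}\) or \(\theta\le-1-\sqrt{12}\) when \(m=5\), and similarly with \(\sqrt{14}\) when \(m=6\). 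The same conclusion could also be reached as in Theorem~\ref{thm:explicit-examples}, by showing that \(D+kI\) is not positive definite through an exact \(LDL^{\mathsf T}\) decomposition. If either graph has diameter other than three, the hypothesis of the theorem is not met and that case is vacuous.

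I expect this finite check to be the main obstacle. It is the only step without a uniform analytic argument. If a conceptual argument is wanted, my first attempt would be a Rayleigh-quotient bound on \(M_t\) using the test vector \((\xi,F_t^*\xi)\), chosen to push \(\theta\) above \(-1+\sqrt{2m+2}\). Otherwise I would rely on an exact computer certificate, as the paper does elsewhere.
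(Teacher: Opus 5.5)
Your reduction is sound as far as it goes. The zero Fourier block does exclude \(m\ge7\), and Theorem~\ref{thm:three-to-one} applies under the contradiction hypothesis, so it cuts \(m\in\{4,5,6\}\) down to \((q,m)=(7,5)\) and \((7,6)\). The gap is that these two cases are never settled, and once \(m\ge7\) is gone they carry the whole content of the theorem. You describe a characteristic-polynomial, Sturm or \(LDL^{\mathsf T}\) computation, but you do not establish that it produces a nonprincipal eigenvalue outside the window. As written, the proof stops exactly where the work begins. (The computation would in fact succeed, but that has to be shown.)

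The missing idea is the one your Rayleigh-quotient remark points toward, and it works uniformly in \(q\).
\begin{itemize}
\item Let \(\sigma\) be the largest singular value of \(F_t\), with unit vectors \(F_t\eta=\sigma\xi\) and \(F_t^{*}\xi=\sigma\eta\). Then \(\operatorname{span}\{(\xi,0),(0,\eta)\}\) is \(M_t\)-invariant. On it \(M_t\) acts by \(\bigl(\begin{smallmatrix}a&\sigma\\ \sigma&b\end{smallmatrix}\bigr)\), where \(a=2\cos(2\pi t/q)\) and \(b=2\cos(4\pi t/q)\).
\item Every entry of \(F_t\) has modulus one, so \(\|F_t\|_F^2=m^2\) and hence \(\sigma^2\ge m\). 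Therefore \(M_t\) has a nonprincipal eigenvalue at least \(\sqrt m+\frac{a+b}{2}\).
\item At \(t=1\) and for \(q\ge7\), one has \(\frac{a+b}{2}=\cos\frac{2\pi}{q}+\cos\frac{4\pi}{q}\ge\cos\frac{\pi}{7}-\frac12\).
\item For \(m\le6\), one has \(\sqrt{2m+2}-\sqrt m\le\sqrt{14}-\sqrt6<\frac{27}{20}<\frac{\sqrt3+1}{2}<\cos\frac{\pi}{7}+\frac12\). So this eigenvalue exceeds \(-1+\sqrt{2m+2}\).
\end{itemize}

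This disposes of \(m\in\{4,5,6\}\) for every prime \(q\ge7\) at once, and it is the paper's proof. It makes both the appeal to the heavy Theorem~\ref{thm:three-to-one} and any finite computation unnecessary. The same estimate gives an eigenvalue above \(3=-1+\sqrt{16}\) at \(m=7\), so that case has \(\Phi<0\). Calling it an ``equality case'' is therefore inaccurate, though harmless for the conclusion.
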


\begin{proof}
Theorem~\ref{thm:regular-degree-six} excludes \(m\le3\).  The zero Fourier
block has eigenvalues \(m+2\), \(2-m\), and \(2\) with
multiplicity \(2m-2\); the shifted WOW window leaves only
\(m\in\{4,5,6\}\).  Let \(\omega=e^{2\pi\mathrm i/q}\).  On the nonzero
character \(t=1\), the adjacency block has form
\[
 \begin{pmatrix}aI&M\\M^*&bI\end{pmatrix},
 \qquad a=2\cos\frac{2\pi}{q},
 \quad b=2\cos\frac{4\pi}{q},
 \quad M_{ik}=\omega^{ik}.
\]
If \(\sigma\) is a singular value of \(M\), the associated two-dimensional
invariant subspace carries
\(\bigl(\begin{smallmatrix}a&\sigma\\\sigma&b\end{smallmatrix}\bigr)\).
Since \(\|M\|_F^2=m^2\), one singular value satisfies
\(\sigma^2\ge m\), and the block has a nonprincipal eigenvalue at least
\[
 \sqrt m+\frac{a+b}{2}
 \ge \sqrt m+\cos(\pi/7)-\frac12.
\]
For \(q\ge7\), both \(\cos(2\pi/q)\) and \(\cos(4\pi/q)\) increase with
\(q\), so their sum is minimized at \(q=7\).  The last inequality then uses
\(2\cos(2\pi/7)+2\cos(4\pi/7)=2\cos(\pi/7)-1\).
Moreover \(\cos(\pi/7)>\sqrt3/2\), and
\(h(m)=\sqrt{2m+2}-\sqrt m\) is increasing for \(m\ge1\) and satisfies
\[
 h(m)\le h(6)=\sqrt{14}-\sqrt6
 <\frac{27}{20}<\frac{\sqrt3+1}{2}.
\]
Thus the displayed nonprincipal eigenvalue lies above
\(-1+\sqrt{2m+2}\) for \(m=4,5,6\).  The common-neighbour
case split, Fourier reduction, radical comparisons, and exact \(q=7\) controls
are independently checked by
\codefile{scripts/verify_proof_audit_08_prime_field.py}.
\end{proof}

\subsection{Layer-respecting matching deletions}

For \(\pi\in S_5\), delete the perfect matching
\[
 \mathcal M_\pi=\bigl\{\{P_{i,j},Q_{\pi(i),i\pi(i)+j}\}:i,j\in\F\bigr\}
\]
from the Hoffman--Singleton graph.

\begin{theorem}\label{thm:matching-deletions}
Each deletion produces a connected simple \(6\)-regular graph of order
\(50\), girth five, and diameter four.  The \(120\) labelled graphs form
exactly two isomorphism classes.  The \(20\) affine permutations have
\[
 \lambda_{\min}(D)=-13,
 \qquad \Phi=-7,
\]
and the \(100\) nonaffine permutations have
\[
 \lambda_{\min}(D)=-6-\sqrt{61},
 \qquad \Phi=-\sqrt{61}.
\]
Thus none of these graphs is a counterexample.
\end{theorem}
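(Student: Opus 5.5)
First I will confirm the graph-theoretic properties. The deleted set \(\mathcal M_\pi\) is a perfect matching: every \(P_{i,j}\) occurs exactly once, and the map \((i,j)\mapsto(\pi(i),i\pi(i)+j)\) is a bijection onto the \(Q\)-vertices. Each listed pair is a genuine edge of \(M\), since \(P_{i,j}\sim Q_{k,ik+j}\) with \(k=\pi(i)\). Deleting a perfect matching from a \(7\)-regular graph leaves a \(6\)-regular simple graph on \(50\) vertices. Its girth is at least five because it is a subgraph of \(M\). It is exactly five because each same-layer \(5\)-cycle \(P_{i,\cdot}\) survives: no \(P\)--\(P\) edge is deleted.

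Connectivity and diameter four are distance questions. For a nonadjacent pair, the distance in \(M-\mathcal M_\pi\) is two if their unique Moore common neighbour is reached by two surviving edges. Otherwise, replacement paths of length three or four must be found. I would handle this through the coordinate residues used in Proposition~\ref{prop:hs-coordinate}, but I expect in practice to certify it by exact BFS for each of the \(120\) permutations.

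For the isomorphism classification, I will use the symmetry available in the coordinate model. The affine maps \(i\mapsto\alpha i+\beta\) on the layer index, combined with compatible relabellings of \(j,k,\ell\), act on the family \(\{\mathcal M_\pi\}\). The automorphism group of order \(252000\) used in Theorem~\ref{thm:hs-radius} permutes perfect matchings of this form. I would show that its induced action on the \(\pi\)'s has exactly two orbits on \(S_5\): the \(20\) affine permutations \(i\mapsto ai+b\) with \(a\ne0\), and the remaining \(100\). Exhibiting explicit generators that map \(\mathcal M_\pi\) to \(\mathcal M_{\pi'}\) settles this.

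That the two classes are non-isomorphic then follows from the distinct distance spectra. The spectral claims reduce to two representatives, for example \(\pi=\mathrm{id}\) and one non-affine permutation.

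The spectral step is the main obstacle. The graph has diameter four, so neither Theorem~\ref{thm:diameter-three-score} nor Proposition~\ref{prop:higher-transfer} (which needs girth at least seven when \(d=4\)) gives \(D\) as a polynomial in \(A\). I would handle it in three parts.

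First, for the affine representative, I would exploit the translation symmetry in \(j\) (and \(\ell\)) by \(\mathbb Z/5\). Fourier decomposition then splits \(D\) into five \(10\times10\) Hermitian blocks, and each block can be computed from the BFS distance pattern. I expect \(-13\) to arise as an integral eigenvalue in one block. The value \(-13<-\diam\) is consistent with Lemma~\ref{lem:diam-rayleigh}.

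Second, for the non-affine representative, the same \(j\)-translation still acts, because \(\mathcal M_\pi\) is translation invariant for every \(\pi\). A block should then carry the factor \(x^2+12x-25\), whose least root is \(-6-\sqrt{61}\).

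Third, I would certify minimality of these roots exactly. For this I would show that \(D+13I\) and \(D+(6+\sqrt{61})I\) are positive semidefinite, via exact factorization of the characteristic polynomial and Sturm isolation, as in Theorem~\ref{thm:explicit-examples}.

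Finally, since the graphs are \(6\)-regular, \(\delta^*=6\), which gives \(\Phi=6-13=-7\) and \(\Phi=6-6-\sqrt{61}=-\sqrt{61}\). Both are negative, so none of the graphs is a counterexample. As a cross-check, Theorem~\ref{thm:endpoint-diameter} is not needed, but the negativity of \(\Phi\) also agrees with Theorem~\ref{thm:diameter-four} for \(k=6\), which gives \(\lambda_{\min}\le-(7+\sqrt{97})/2<-6\).
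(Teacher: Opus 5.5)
Your outline follows the paper's proof closely. The shared steps are the perfect-matching check, girth five from the surviving same-layer pentagons, exact BFS for connectivity and diameter four, and explicit coordinate automorphisms for the orbit count. You also share a spectral invariant to separate the two orbits (you use distance spectra, the paper uses adjacency characteristic polynomials; either works), and exact characteristic polynomials with Sturm separation for the least roots. Your $\mathbb Z/5$ Fourier reduction to $10\times10$ blocks is a legitimate computational shortcut, since every $\mathcal M_\pi$ is invariant under $j\mapsto j+1$, $\ell\mapsto\ell+1$. It does not change the certification method.

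The orbit step, however, is misdescribed in a way that would make it fail as stated.

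\emph{The full group does not act on the family.} The group of order $252000$ is edge-transitive, so some automorphism carries a $P$--$Q$ edge to a $P$--$P$ edge. The image of any $\mathcal M_\pi$ containing that edge is then not of the form $\mathcal M_{\pi'}$. An automorphism that does preserve the family must preserve the union of all $\mathcal M_\pi$, namely the $125$ $P$--$Q$ edges, hence also the ten layer cycles. So it either preserves or swaps the two types.

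\emph{Type-preserving maps alone give too many orbits.} These are exactly the layer-affine maps you describe. They act by $\pi\mapsto\lambda'\circ\pi\circ\lambda^{-1}$ with $\lambda(i)=\alpha i+\beta$ and $\lambda'(k)=\pm\alpha^{-1}k+\beta'$. On an affine $\pi(i)=ai+b$ this multiplies $a$ by $\pm\alpha^{-2}\in\{\pm1\}$. So these maps split the $20$ affine permutations into two invariant sets of size $10$ (slopes $\pm1$ versus slopes $\pm2$), and you would not reach exactly two orbits.

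\emph{The fix.} You need, as the paper does, the type-swapping coordinate automorphisms that send pentagons to pentagrams, rescaling second coordinates by $\pm2$ and $\pm3$. They act by $\pi\mapsto\sigma\circ\pi^{-1}\circ\sigma'^{-1}$, with layer slopes satisfying $\alpha\alpha'=\pm2$, and they carry slope $\pm1$ to slope $\pm2$. With those generators included, your plan coincides with the paper's.
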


\begin{proof}
Every \(P_{i,j}\) occurs once in \(\mathcal M_\pi\); for a fixed
\(Q_{k,\ell}\), the unique incident matching edge is obtained from
\(i=\pi^{-1}(k)\) and \(j=\ell-ik\).  Thus \(\mathcal M_\pi\) is a
perfect matching, and its deletion leaves a simple \(6\)-regular graph.
Deleting edges cannot create a short cycle, while the same-layer pentagons
remain; exact breadth-first search gives connectedness and diameter four.

Explicit type-preserving and type-swapping coordinate automorphisms generate
orbits of sizes \(20\) and \(100\).  The representatives have different
adjacency characteristic polynomials, so the orbits are distinct isomorphism
classes.  Exact distance characteristic polynomials and Sturm separators give
the stated least roots.  All \(120\) matchings, \(400\) coordinate maps,
\(48{,}000\) matching images, orbit coverage, graph hypotheses, and root
certificates are checked
by \codefile{scripts/verify_proof_audit_07_layer_matchings.py}.
\end{proof}

\section{Exact computation and formal verification}\label{sec:verification}

The analytic arguments above are primary.  Exact computation is used in three
roles: to certify explicitly labelled finite graphs, to exhaust precisely
specified finite orbit families, and to check symbolic identities whose
derivations are supplied.  No theorem-level sign or eigenvalue ordering uses
floating-point arithmetic.  The exact computations use SymPy and NetworkX
\cite{MeurerEtAl2017,HagbergSchultSwart2008}.

The accompanying release archives the labelled graph data, exact rational
and polynomial certificates, and finite-orbit records used here, so the
computer-assisted steps are reproducible without a file-by-file index in the
paper.

The explicit \(50\)-vertex counterexample is fully formalized in Lean 4.31
with Mathlib 4.31 \cite{deMouraUllrich2021,Mathlib2020}.  The development
pins the exact toolchain and dependency revision in
\codefile{lean/lean-toolchain} and \codefile{lean/lake-manifest.json}.  It
checks the coordinate graph, regularity, the exhaustive common-neighbour
certificate, girth five, the adjacency-square and distance-matrix identities,
and an exact rational diagonalization with multiplicities.  Thus the result is
verified at graph level, including its least distance eigenvalue and strict
WOW-284 gap.

Lean also kernel-checks finite spectral certificates attached to the explicit
constructions of orders \(38,39,40,42\).  At orders \(38,39,42\), they
certify the dual-degree data and positive definiteness of the relevant shifted
distance matrix.  At order \(40\), they certify an invertible exact
diagonalization, multiplicities, least eigenvalue \(-5\), dual degree six,
and gap one.  These are finite matrix certificates rather than end-to-end
\texttt{SimpleGraph.dist} formalizations.

Separately, Lean formalizes the analytic optimization statement in
Theorem~\ref{thm:lp-ceiling}.  For every integer \(k\ge4\) and every admissible
finitely supported expansion
\[
  f=\sum_i c_iF_i,
  \qquad c_0>0,
  \qquad c_i\ge0\quad(i\ge5),
  \qquad f\vert_{I_k}\le0,
\]
it proves
\[
  B_kc_0\le f(k),
  \qquad B_k=\frac{(k+2)(k^2+3)}6.
\]
The formal development defines the coefficient family of the displayed
quartic \(f_*\), proves that it is admissible and attains equality, and proves
that the equality cases are exactly the positive scalar multiples of \(f_*\),
both as polynomials and at coefficient level.  The public Lean development is
sorry-free and kernel-checked by Lean 4.31.

This LP formalization is deliberately graph-independent.  It does not
formalize the trace interpretation of the \(F_i(A)\), the girth-five
vanishing and nonnegativity statements, or the passage from the LP inequality
to graph-order bounds.  The integral-slack consequences, signed-complement
theory, puncture spectra, and deletion results lie outside this Lean
development.  They are proved in the text using the analytic and, where
explicitly identified, exact computer-assisted components.  This is the
precise scope of the Lean claims in the paper.

\section{Broader mathematical questions}\label{sec:scope}

The mechanisms in this paper point beyond the conjecture that motivated them.
They connect extremal polynomial certificates, local positive-semidefinite
constraints, perturbations of metric operators, exact computation, and formal
proof.  At that methodological level, they suggest the following questions.

\begin{enumerate}
\item Given an optimal polynomial or semidefinite certificate for a global
spectral inequality, when do the principal minors of its slack operator form a
complete hierarchy of local constraints?  Can rank or flat-extension
conditions force finite convergence and reconstruct the extremal discrete
objects?
\item Is there a general stability theory for spectral inequalities in which a
small global defect forces proximity to a structured algebraic model, and
local perturbations distinguish sporadic objects from finite shadows of
infinite families?
\item Which one-variable spectral arguments survive when homogeneity or
commutativity is lost?  Can degree-weighted operators, nonbacktracking
operators, or matrix-valued orthogonal polynomials provide comparably sharp
certificates for irregular or multitype systems?
\end{enumerate}

OpenAI ChatGPT-5.6 Sol Pro assisted with adversarial proof checking, proof
exploration, and Lean formalization.  The author assumes full responsibility
for the mathematics, attribution, and conclusions.  The source, exact
certificates, and build instructions are available at
\href{https://github.com/SamPetkov/wow284}{\texttt{github.com/SamPetkov/wow284}}
and correspond to release \texttt{\RepoTag}.

\clearpage
\begingroup
\footnotesize
\setlength{\bibsep}{0pt}
\renewcommand{\bibsection}{\section*{References}}
\bibliographystyle{unsrtnat}
\bibliography{references}
\endgroup

\end{document}